\newcommand{\Ext}{\operatorname{Ext}}
\newcommand{\trdeg}{\operatorname{tr.deg}}
\newcommand{\GKdim}{\operatorname{GKdim}}
\newcommand{\bpr}{\begin{proof}}
\newcommand{\epr}{\end{proof}}
\newcommand{\spec}{\operatorname{Spec}}
\newcommand{\mc}{\mathcal}
\newcommand{\mb}{\mathbb}
\newcommand{\wt}{\widetilde}
\numberwithin{equation}{section}
 \theoremstyle{plain}
\newtheorem{theorem}[equation]{Theorem}
\newtheorem{lemma}[equation]{Lemma}
\newtheorem{corollary}[equation]{Corollary}
\newtheorem{proposition}[equation]{Proposition}
\theoremstyle{definition}
\newtheorem{definition}[equation]{Definition}
\newtheorem{example}[equation]{Example}
\author[Rogalski]{Daniel Rogalski}
\address{University of California, San Diego, Department of Mathematics, 9500 Gilman Dr. \# 0112, La Jolla, CA 92093-0112} 
\email{drogalski@ucsd.edu}
\thanks{The author was partially supported by the NSA grant H98230-15-1-0317.}
\subjclass[2000]{Primary:
16P40, 
16P90, 
16R20,  
16W50.  
Secondary:
16E65, 
16S38. 
}
\keywords{stably noetherian ring, base field extension, tensor product, Gelfand-Kirillov dimension, polynomial identity ring, graded ring, Hilbert series, homological smoothness}
\date{\today}
\title{Stably noetherian algebras of polynomial growth}
\begin{document}

\begin{abstract}
Let $A$ be a right noetherian algebra over a field $k$.  If the base field extension $A \otimes_k K$ remains right noetherian for all extension fields $K$ of $k$, then $A$ is called \emph{stably right noetherian} over $k$.  We develop an inductive method to show that certain algebras of finite Gelfand-Kirillov dimension are stably noetherian, using critical composition series.  We use this to characterize which algebras satisfying a polynomial identity are stably noetherian.  The method also applies to many $\mb{N}$-graded rings of finite global dimension; in particular, we see that a noetherian Artin-Schelter regular algebra must be stably noetherian.  In addition, we study more general variations of the stably noetherian property where the field extensions are restricted to those of a certain type, for instance  purely transcendental extensions. 
\end{abstract}

\maketitle

\section{Introduction}

Let $R$ be a (unital, not necessarily commutative) algebra over a field $k$.  It is often advantageous to extend the base field $k$ to some larger field $K$ and consider the $K$-algebra $R \otimes_k K$ instead.  For example, one may want to work over a base field with a large cardinality, or one which is algebraically closed.  One naturally wishes for important properties of $R$ to be preserved.   In this paper we study the basic question of when the noetherian property still holds after base field extension.   In particular, we consider the following notion first introduced by Farina \cite{Fa}.
\begin{definition}
\label{def:sn1}
Let $R$ be an algebra over a field $k$.  The algebra $R$ is
called \emph{stably right noetherian} (over $k$) if $R \otimes_k
K$ is right noetherian for all extension fields $K$ of $k$. 
\end{definition}
\noindent Of course, there are also the obvious left-sided or two-sided variants, and the definition can just as easily be made for $R$-modules.

The behavior of the noetherian property under tensor products more generally has long been known to be a tricky issue, even for commutative algebras.   In one of the seminal papers on the subject, V{\'a}mos studied tensor products of two fields over a common subfield \cite{Va}. In particular, he proved the following important result.
\begin{theorem} \cite[Theorem 11]{Va}
\label{thm:vamos}
Let $k \subseteq K$ be a field extension.  Then $K \otimes_k K$ is noetherian if and only if $K$ is finitely generated 
as a field extension of $k$.
\end{theorem}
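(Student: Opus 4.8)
The plan is to prove the two implications separately; the forward one is soft, and all the content lies in the converse.

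\emph{The ``if'' direction.} Suppose $K = k(b_1,\dots,b_m)$ is finitely generated over $k$, and set $A = k[b_1,\dots,b_m] \subseteq K$, a finitely generated $k$-algebra which is a domain with fraction field $K$. Then $A \otimes_k K$ is a finitely generated $K$-algebra, hence noetherian by the Hilbert basis theorem. Since $K = \operatorname{Frac}(A) = S^{-1}A$ for $S = A \setminus \{0\}$ and localization commutes with the base change $-\otimes_k K$, the ring $K \otimes_k K$ is a localization of $A \otimes_k K$, hence also noetherian.

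\emph{The ``only if'' direction.} I argue by contraposition: assuming $K/k$ is not finitely generated, I will produce an infinite strictly ascending chain of ideals in $R := K \otimes_k K$. Because $K/k$ is not finitely generated, one may inductively choose $a_1, a_2, \dots \in K$ with $a_{n+1} \notin K_n := k(a_1,\dots,a_n)$ for all $n \ge 0$ (where $K_0 = k$); this is always possible, since $K_n = K$ would make $K/k$ finitely generated. Put $\delta_i = a_i \otimes 1 - 1 \otimes a_i \in R$ and let $I_n = (\delta_1,\dots,\delta_n)$ be the ideal they generate, so $I_0 \subseteq I_1 \subseteq I_2 \subseteq \cdots$. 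The claim is that every inclusion is strict, which finishes the proof. Two facts do the work. \textbf{(i)} There is a natural isomorphism $R/I_n \cong K \otimes_{K_n} K$. To see this one checks that the ideal generated by the finitely many $\delta_i$ $(i\le n)$ already contains $x \otimes 1 - 1 \otimes x$ for \emph{every} $x \in K_n$: the map $x \mapsto x \otimes 1 - 1 \otimes x$ is additive and satisfies the Leibniz-type identity $xy \otimes 1 - 1 \otimes xy = (x \otimes 1)(y \otimes 1 - 1 \otimes y) + (1 \otimes y)(x \otimes 1 - 1 \otimes x)$, and $x \otimes 1$ is a unit of $R$ for $x \ne 0$ (which also handles inverses); and then $R/I_n$ is the largest quotient of $K \otimes_k K$ in which $x \otimes 1 = 1 \otimes x$ for all $x \in K_n$, namely $K \otimes_{K_n} K$. \textbf{(ii)} For any intermediate field $k \subseteq L \subseteq K$ and any $\alpha \in K$, one has $\alpha \otimes 1 = 1 \otimes \alpha$ in $K \otimes_L K$ if and only if $\alpha \in L$; indeed, choosing an $L$-basis of $K$ containing $1$ makes $K \otimes_L K$ a free left $K$-module on the vectors $1 \otimes e_j$, and expanding $\alpha$ in this basis shows $\alpha \otimes 1 = 1 \otimes \alpha$ forces every coordinate except that of $e_j = 1$ to vanish, while the converse is clear. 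Granting (i) and (ii): the element $a_{n+1} \otimes 1 - 1 \otimes a_{n+1}$ dies in $R/I_{n+1} \cong K \otimes_{K_{n+1}} K$ but is nonzero in $R/I_n \cong K \otimes_{K_n} K$ since $a_{n+1} \notin K_n$; hence $I_n \subsetneq I_{n+1}$ for all $n$, and $R$ is not noetherian.

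The main obstacle is the bookkeeping in the converse: pinning down $R/I_n$ as the relative tensor product $K \otimes_{K_n} K$ so that the ``right'' successive quotients appear, and establishing the detection criterion (ii). Everything else — existence of the $a_i$, the chain being ascending, and non-noetherianity from a strictly ascending chain — is immediate. I would also flag that one should \emph{not} route the argument through Kähler differentials: noetherianity of $K \otimes_k K$ forces the diagonal ideal $\mathfrak{d} = \ker(K \otimes_k K \to K)$ to be finitely generated, hence $\mathfrak{d}/\mathfrak{d}^2 \cong \Omega_{K/k}$ finite-dimensional over $K$, but in characteristic $p$ this can hold even when $K/k$ is not finitely generated (for instance $\Omega_{K/k} = 0$ for $K = \bigcup_n k_0(t^{1/p^n})$ over $k = k_0(t)$), so the differential approach is insufficient, whereas the chain argument above is characteristic-free.
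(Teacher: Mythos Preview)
Your proof is correct and follows essentially the same approach as the paper's sketch of V\'amos's argument: for the ``only if'' direction you build the chain of ideals $I_n = \ker(K \otimes_k K \to K \otimes_{K_n} K)$ from a strictly ascending tower of finitely generated subfields $K_n$, exactly as described in the paragraph following the theorem, and for the ``if'' direction you give the direct localization-of-an-affine-algebra argument the paper alludes to. Your explicit verification that $I_n$ is generated by the $\delta_i$ and the basis computation for fact (ii) flesh out precisely the step the paper summarizes as ``one checks,'' and your remark on why the K\"ahler-differential shortcut fails in positive characteristic is a useful aside but not needed for the argument.
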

\noindent
Thus, although any field $K$ is noetherian, if it is an infinitely generated field extension of its subfield $k$, then 
it will not be stably noetherian over $k$.  It is easy to explain the main idea behind the proof.  If $K$ is an infinitely generated extension of $k$, then there is an infinite proper ascending chain of subfields $k = K_0 \subsetneq K_1 \subsetneq K_2 \subsetneq \dots \subsetneq K_n \subsetneq \dots \subseteq K$.  One checks that if $I_n$ is the kernel of the natural surjective map $K \otimes_k K \to K \otimes_{K_n} K$, then $\{I_n\}$ gives an infinite proper ascending chain of ideals in $K \otimes_k K$.

In this paper, a $k$-algebra $R$ will be called \emph{affine} (over $k$) when it is finitely generated as a $k$-algebra.  It is clear that if $R$ is affine over $k$ and $k \subseteq K$ is an extension field, then 
$R \otimes_k K$ is affine over $K$.  Any affine commutative $k$-algebra is noetherian, by the Hilbert basis theorem.  It follows that any affine commutative $k$-algebra $R$ is stably noetherian over $k$.  It is also easy to 
see that localization preserves the stably noetherian property, so localizations of affine commutative $k$-algebras 
must be stably noetherian over $k$.  This explains the other, easier implication of V{\'a}mos's theorem.

On the other hand, Resco and Small gave an example of a noncommutative affine $k$-algebra which is noetherian but not stably noetherian, in \cite{RS}.  It is an Ore extension of the form $S = E[s; \delta]$, where $E$ is an infinitely generated purely inseparable algebraic extension of a base field $k$ (in particular, $E$ has characteristic $p > 0$), but where nonetheless $S$ is affine over $k$.  V{\'a}mos's result implies that $E$ is not stably noetherian over $k$, and this then implies that $S$ is not stably noetherian over $k$ either, since $S$ is a faithfully flat $E$-module.  

The strongest result in the positive direction is due to Bell.
\begin{theorem} \cite[Theorem 1.2]{Be}
Let $k$ be an uncountable, algebraically closed base field.  If $A$ is a right noetherian countably generated $k$-algebra, then $A$ is stably right noetherian. 
\end{theorem}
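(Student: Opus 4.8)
I would prove this by a two‑step reduction followed by a ``generic specialization'' argument.

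\textit{Reductions.} If $k \subseteq L \subseteq K$ are fields, then $A \otimes_k K = (A\otimes_k L)\otimes_L K$ is free, hence faithfully flat, as a left $A\otimes_k L$-module, and right-noetherianity descends along faithfully flat ring extensions; so $A\otimes_k K$ right noetherian forces $A\otimes_k L$ right noetherian. Also, a strictly ascending chain of right ideals in $A\otimes_k K'$ can be witnessed by countably many elements, which involve only countably many elements of $K'$, so (intersecting with $A\otimes_k K_0$) such a chain already occurs over a countably generated subfield $K_0\subseteq K'$. Combining these, and replacing $K$ by $\overline K$ (still countably generated over $k$), it suffices to show $A\otimes_k K$ is right noetherian whenever $K/k$ is countably generated and algebraically closed; in that case $|K|=|k|$. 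Finally, $A$ countably generated gives $\dim_k A\le\aleph_0$, which will be essential.

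\textit{Set-up.} Suppose for contradiction $R:=A\otimes_k K$ is not right noetherian, so there is a sequence $x_1,x_2,\dots\in R$ with $x_n\notin x_1R+\dots+x_{n-1}R$ for all $n$; any subsequence inherits this. Fix a transcendence basis $\{t_i\}$ of $K/k$. For a tuple $\mathbf c=(c_i)$ of elements of $k$ I want a $k$-algebra homomorphism $\pi_{\mathbf c}$ from a suitable subring of $R$ onto $A$ that sends $t_i\mapsto c_i$ and, crucially using that $k$ is algebraically closed, sends each element of $K$ that is algebraic over $k(\{t_i\})$ and occurs among the finitely many elements of $K$ needed to express $x_1,\dots,x_n$ to a root in $k$ of its specialized minimal polynomial, done coherently. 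For all $\mathbf c$ avoiding countably many proper Zariski-closed conditions (clearing denominators; leading coefficients nonzero) this $\pi_{\mathbf c}$ is defined on every $x_n$, yielding $\bar x_n:=\pi_{\mathbf c}(x_n)\in A$; the plan is to choose $\mathbf c$ so that $\bar x_n\notin\bar x_1A+\dots+\bar x_{n-1}A$ for all $n$, which contradicts the right-noetherianity of $A$.

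\textit{Generic specialization.} Fix $n$. After multiplying the $x_i$ by units of $R$ we may assume $x_1,\dots,x_n\in A\otimes_k R_0$ for a finitely generated $k$-subalgebra $R_0\subseteq K$, with $\pi_{\mathbf c}$ corresponding to a $k$-point of $\operatorname{Spec} R_0$. Put $M_n=(A\otimes_k R_0)/(x_1(A\otimes_k R_0)+\dots+x_{n-1}(A\otimes_k R_0))$, a cyclic right $A\otimes_k R_0$-module, hence a module over the central noetherian ring $R_0$; the condition $x_n\notin x_1R+\dots+x_{n-1}R$ says exactly that the image $q_n$ of $x_n$ in $M_n$ has zero annihilator in $R_0$. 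Since $\dim_k A\le\aleph_0$, $M_n$ is a countably generated $R_0$-module, so an increasing union of finitely generated $R_0$-submodules; for each such submodule, generic flatness over the noetherian domain $R_0$ supplies one nonzero $h\in R_0$ so that $q_n$ remains nonzero in the $\mathbf c$-fibre of that submodule whenever $h(\mathbf c)\ne 0$, and $q_n$ becomes zero in the $\mathbf c$-fibre of $M_n$ only if it does so in some finitely generated piece. Therefore the bad set $\{\mathbf c:\bar x_n\in\bar x_1A+\dots+\bar x_{n-1}A\}$ lies in a countable union of proper Zariski-closed subsets, and unioning over $n$ and over the conditions from the set-up keeps the total bad locus a countable union of proper closed subsets. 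As $k$ is uncountable, an algebraic variety over $k$ is never a countable union of proper subvarieties, so a good $\mathbf c$ exists and we reach the desired contradiction. The main obstacle is exactly this paragraph: generic flatness demands finite generation while $M_n$ is only countably generated over $R_0$, forcing the exhaustion by finitely generated submodules (legitimate precisely because $\dim_k A\le\aleph_0$) and deferring the use of uncountability to the final step; a secondary burden is organizing the parameter space --- really a pro-variety built from the transcendence directions $c_i$ together with compatible root choices --- and verifying that $\pi_{\mathbf c}$ and all denominator-clearings cohere, which is also where ``$k$ algebraically closed'' enters.
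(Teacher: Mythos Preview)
The paper does not prove this theorem; it is quoted from Bell~\cite{Be} purely as background motivation for the paper's own, entirely different, inductive method based on GK dimension. There is no argument in the paper to compare your proposal against.

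That said, your proposal follows Bell's original strategy: reduce to a countably generated extension, then specialize along a transcendence basis to transport a hypothetical infinite strictly ascending chain in $A\otimes_k K$ back to one in $A$, using the uncountability of $k$ to avoid countably many bad closed conditions and the algebraic closedness of $k$ to supply roots. The key observation---that $\dim_k A\le\aleph_0$ forces each $M_n$ to be only countably generated over $R_0$, so that generic flatness applied to a countable exhaustion by finitely generated $R_0$-submodules exhibits the bad locus as a countable union of proper closed sets---is exactly the heart of the matter, and your outline is sound.

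Two minor points. First, the assertion that $\overline{K}$ is ``still countably generated over $k$'' is false in general: for instance $\overline{\mb{C}(t)}$ contains the mutually Kummer-independent elements $\sqrt{t-\lambda}$ for all $\lambda\in\mb{C}$, so it is not countably generated over $\mb{C}$. Fortunately you never use this step: your specialization argument needs only that the transcendence degree of $K/k$ is countable and that $k$ is algebraically closed, both of which you already have after the first reduction, since only countably many elements of $K$ occur among the $x_n$. Simply drop the passage to $\overline{K}$. Second, as you yourself flag, organizing the parameter space as a pro-variety with coherent root choices is the genuinely fiddly part; one clean way is to fix an increasing chain of finitely generated $k$-subdomains $R_0^{(1)}\subseteq R_0^{(2)}\subseteq\cdots\subseteq K$ with $x_1,\dots,x_n\in A\otimes_k R_0^{(n)}$, and then choose a compatible system of $k$-points inductively, at each stage avoiding the still-countable union of bad hypersurfaces accumulated so far.
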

\noindent
Of course, Bell's theorem still leaves open the case where one is working over an arbitrary base field, and wants to extend it, for example, to an algebraically closed one, without losing the noetherian property.  

In this paper, we will study the stably noetherian property over arbitrary base fields, but instead restrict more heavily the kinds of algebras we consider.   In particular, we show how a hypothesis of finite Gelfand-Kirillov (GK) dimension, or in other words polynomial growth, can be used to prove that certain algebras are stably noetherian.  The main idea is to induct on the dimension, using critical composition series. 

We begin in Section~\ref{sec:dim}  with a review some of the basic facts about GK dimension which we will need.  
Recall that a right $A$-module $M$ with $\GKdim(M) = d$ is called $d$-homogeneous if every nonzero submodule $N$ of $M$ 
has $\GKdim(N) = d$ as well.  The module $M$ is called $d$-critical if $\GKdim(M/N) < d$ for all such nonzero submodules $N$.
A critical composition series of $M$ is a series of submodules 
$0 = M_0 \subsetneq M_1 \subsetneq \dots \subsetneq M_{n-1} \subsetneq M_n = M$ such that each $M_{i+1}/M_i$ is $d_i$-critical, 
where $d_i \leq d_{i+1}$ for all $i$.  Our results will only apply to algebras $A$ for which GK dimension of finitely generated $A$-modules is exact, finitely partitive, and integer valued.  We will review the definitions below, but the important point is that these properties tend to hold for modules over reasonable algebras one encounters in practice (possibly even all noetherian algebras), and so should be considered weak assumptions.  These properties 
are needed to ensure that critical composition series of noetherian modules exist and behave reasonably.
In Section~\ref{sec:dim} we also study the behavior of the homogeneous and critical properties of modules under base field extension.  We show that the homogeneous property is preserved by a base field extension, but the critical property is not in general (though it is if the extension is purely transcendental).

Next, in Section~\ref{sec:def} we give the basic definitions and results concerning the stably noetherian property.
In fact, we work somewhat more generally.  Because one might be willing to restrict the kinds of base field extensions one does (for example, sticking to purely transcendental extensions), we set up a general definition of stably noetherian, which refers to the preservation of the noetherian property under all field extensions of some specified kind.   
Our main inductive method follows in Section~\ref{sec:ind}.  Its basic idea is as follows.  Since criticality is not preserved by base field extension, in general the length of a critical composition series of a module grows after base field extension.  However, 
in some cases one is able to show for all noetherian $A$-modules $M$ that there is a bound to how much the length of a critical composition series of $M \otimes_k K$ can grow, for all extensions $k \subseteq K$ of the kind of interest.  When this happens, an inductive argument on the dimension can be used to prove that $M$ is stably noetherian for that kind of extension.   

In the remainder of the paper we give three applications of the method.  Since purely transcendental extensions preserve criticality, 
in this case our inductive method applies easily to give a quite general result.  
\begin{theorem}[Theorem~\ref{thm:purely-noeth}]
Let $R$ be a right noetherian $k$-algebra with $\GKdim_k R < \infty$.  For each $d \geq 0$ write 
$R_d = R \otimes_k k(x_1, \dots, x_d)$.  Assume that for each $d \geq 0$ the GK dimension of right $R_d$-modules 
over the field $k(x_1, \dots, x_d)$ is exact, finitely partitive, and takes integer values.  
Then $R \otimes_k K$ is right noetherian for all purely transcendental field extensions $k \subseteq K$.
\end{theorem}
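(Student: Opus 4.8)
The plan is to prove the statement by reducing to the case of a single indeterminate, then iterating and passing to a direct limit. First I would observe that it suffices to show: if $S$ is any right noetherian $k$-algebra of finite GK dimension such that GK dimension of finitely generated $S$-modules (over $k$) is exact, finitely partitive, and integer-valued, and the same holds for all $S \otimes_k k(x_1,\dots,x_d)$, then $S \otimes_k k(x)$ is right noetherian. Given this, for a general purely transcendental extension $k \subseteq K = k(\{x_i\}_{i \in I})$ one writes $K$ as the directed union of its subfields $k(x_{i_1},\dots,x_{i_n})$ over finite subsets of $I$; applying the single-variable statement repeatedly shows each $R \otimes_k k(x_{i_1},\dots,x_{i_n})$ is right noetherian (the hypothesis on exactness/finite partitivity/integrality is inherited at each stage precisely because it is assumed for all $R_d$), and a standard direct limit argument shows that $R \otimes_k K = \varinjlim R \otimes_k k(x_{i_1},\dots,x_{i_n})$ is right noetherian: any right ideal is the union of its intersections with the finitely generated pieces, but one must be a little careful here since a directed union of noetherian rings need not be noetherian, so I would instead argue that $R \otimes_k K$ is a localization of $R \otimes_k k[\{x_i\}]$... no — the cleaner route is to note $R\otimes_k K$ is faithfully flat over each $R \otimes_k k(x_{i_1},\dots,x_{i_n})$ and use the ascending chain condition directly: any chain of right ideals of $R \otimes_k K$, intersected down, is defined over some finite stage after clearing denominators, contradicting noetherianity there. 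I will spell this limit step out carefully.

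The heart of the matter is the single-variable case, and here I would invoke the inductive method of Section~\ref{sec:ind}. Since $k \subseteq k(x)$ is purely transcendental, the results of Section~\ref{sec:dim} tell us that criticality of modules is preserved under this base extension: if $M$ is a $d$-critical right $R$-module then $M \otimes_k k(x)$ is again $d$-critical (or at worst, its critical composition series has length bounded independently of $M$ — in fact length one, by preservation of criticality). Therefore, for every finitely generated right $R$-module $M$, a critical composition series of $M$ of length $n$ base-extends to a filtration of $M \otimes_k k(x)$ whose successive quotients are critical of the same dimensions, i.e. a critical composition series of the same length $n$. This is exactly the "bounded growth of the length of a critical composition series" hypothesis required to run the inductive argument, and so the inductive theorem of Section~\ref{sec:ind} applies to conclude that every noetherian right $R$-module is stably noetherian with respect to the extension $k \subseteq k(x)$; in particular $R \otimes_k k(x)$ is right noetherian as a module over itself.

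The main obstacle, and the step I would be most careful about, is twofold. First, verifying that the dimension hypotheses propagate correctly through the iteration: when I pass from $R$ to $R_1 = R \otimes_k k(x_1)$ and want to apply the single-variable result again to adjoin $x_2$, I need that GK dimension over $k(x_1)$ of finitely generated $R_1$-modules is exact, finitely partitive, and integer-valued, and that the same holds for $R_1 \otimes_{k(x_1)} k(x_1)(x_2,\dots,x_d) = R \otimes_k k(x_1,\dots,x_{d+1})$ — all of which is guaranteed by the blanket hypothesis in the theorem statement that this holds for $R_d$ for every $d \geq 0$ (using that $\GKdim$ is insensitive to purely transcendental base change, so $\GKdim_{k(x)} R_1 = \GKdim_k R < \infty$). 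Second, the direct limit argument for an infinitely generated purely transcendental extension must be handled with the clearing-denominators argument sketched above rather than by naively quoting "direct limit of noetherian is noetherian," which is false; I would present this as an explicit lemma: a right ideal $I$ of $R \otimes_k K$, being generated by finitely many elements once one knows a generating set exists — which is the point at issue — so instead: given an ascending chain $I_1 \subseteq I_2 \subseteq \cdots$ in $R \otimes_k K$, each element appearing involves only finitely many of the $x_i$, and $\bigcup I_j$ is finitely generated over... this requires the chain to stabilize, which follows because there is a finite stage $F = k(x_{i_1},\dots,x_{i_n})$ over which a generating set of each $I_j$ can be defined once we know $R \otimes_k F$ is noetherian, giving the contradiction. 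I would make sure this reduction is airtight, as it is the one genuinely nontrivial point outside the black box of Section~\ref{sec:ind}.
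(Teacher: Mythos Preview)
Your treatment of the single-variable step is essentially what the paper does: preservation of criticality under a purely transcendental extension (Proposition~\ref{prop:critical}) means a critical composition series of $M$ base-extends to one of the same length for $M \otimes_k k(x)$, so the bound $n(M) = \ell(M)$ holds and the inductive machinery of Section~\ref{sec:ind} applies. Iterating this over finitely many variables is also fine.

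The genuine gap is the direct-limit step for infinite transcendence degree. You correctly flag that a directed union of noetherian rings need not be noetherian, but your proposed workarounds do not close the gap. The ``clearing denominators'' sketch amounts to the following: given a chain $I_1 \subseteq I_2 \subseteq \cdots$ in $R_K$, intersect with $R_n = R \otimes_k k(x_1,\dots,x_n)$ and use noetherianity of $R_n$. For each fixed $n$ the chain $(I_j \cap R_n)_j$ stabilizes at some $j_n$, but there is no reason the $j_n$ are bounded as $n$ grows; equivalently, the chain $(I_n \cap R_n)R_K$ can be strictly increasing indefinitely without contradicting noetherianity of any individual $R_n$. Faithful flatness of $R_K$ over $R_n$ lets you contract and extend cleanly, but it does not by itself force stabilization. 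No elementary ring-theoretic argument of this type is known to work here, and indeed the analogous union $k[x_1,x_2,\dots]$ of polynomial rings shows the general shape of the argument fails.

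The paper avoids this entirely by applying Theorem~\ref{thm:reduce} directly to the extensive class $\mc{F} = \mc{P}\mc{T}$, not just to the single extension $k \subseteq k(x)$. For a fixed purely transcendental $k \subseteq K$, the small fields are exactly those isomorphic to $k(x_1,\dots,x_d)$, so hypothesis~(i) is the theorem's standing assumption, and hypothesis~(ii) follows from Proposition~\ref{prop:critical} applied inductively, giving $n(M) = \ell(M)$. The point you are missing is that the proof of Theorem~\ref{thm:reduce} already contains the passage from small fields to the full extension $K$: for a critical $R_L$-module $M$ that stays critical over every small $L'$, one shows $M \otimes_L K$ is noetherian by taking an arbitrary submodule $N$, choosing a nonzero element, and passing to a small $L'$ containing its coefficients so that $N \cap M_{L'} \neq 0$; criticality of $M_{L'}$ and the inductive hypothesis then finish the job. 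This coefficient argument is the correct replacement for your direct limit, and it is precisely why Theorem~\ref{thm:reduce} is formulated for a class of extensions rather than a single one. You should invoke it at that level of generality rather than only for $k \subseteq k(x)$.
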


Next, in Section~\ref{sec:PI} we study the special case of polynomial identity (PI) rings.  As already mentioned, 
affine commutative $k$-algebras are stably noetherian.  This result was extended to affine right noetherian PI $k$-algebras 
by Small \cite{Sm}; see \cite[Proposition 2.51]{Fa}.  However, it is still interesting to ask which possibly non-affine PI algebras are stably right noetherian.  Our method allows us to settle this, in particular answering \cite[Question 2.55]{Fa}.
\begin{theorem}[Corollary~\ref{cor:PI-char}]
Let $R$ be a right noetherian PI $k$-algebra.  Then $R$ is stably right noetherian over $k$ if and only if for 
all prime ideals $P$ of $R$, the center $Z$ of the classical ring of fractions $Q(R/P)$ is a finitely generated field extension of $k$. 
\end{theorem}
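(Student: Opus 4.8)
The plan is to establish both directions using the machinery developed earlier, reducing to the prime case via standard noetherian PI techniques. For the ``only if'' direction, suppose $R$ is stably right noetherian and let $P$ be a prime ideal of $R$. Since the stably noetherian property passes to factor rings, $R/P$ is stably right noetherian; by Goldie's theorem (applicable since a prime noetherian PI ring is a prime Goldie ring) the classical ring of fractions $Q = Q(R/P)$ exists and is a central simple algebra finite over its center $Z$. Localization preserves the stably noetherian property, so $Q$ is stably right noetherian over $k$. A finite-dimensional algebra over its center is stably noetherian iff its center is; hence $Z$ is stably noetherian over $k$. But $Z$ is a field, so by Vámos's theorem (Theorem~\ref{thm:vamos}) — specifically the direction that an infinitely generated field extension is not stably noetherian, witnessed by the chain $\{I_n\}$ in $Z \otimes_k Z$ — $Z$ must be finitely generated as a field extension of $k$.

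For the ``if'' direction, assume every such center $Z$ is a finitely generated field extension of $k$, and let $K \supseteq k$ be an arbitrary extension field; we must show $R \otimes_k K$ is right noetherian. The strategy is to apply the inductive method of Section~\ref{sec:ind}: for a noetherian PI algebra, GK dimension of finitely generated modules is exact, finitely partitive, and integer-valued (these are known for PI rings), so the method applies once we verify the key hypothesis — a uniform bound on the growth of the length of a critical composition series of $M \otimes_k K$ for all noetherian right $R$-modules $M$. To get this bound, I would analyze what happens to a $d$-critical module $M$ after base extension. Such an $M$ has annihilator a prime ideal $P$ (after passing to the relevant prime factor), and $M$ is, up to lower-dimensional pieces, a module over the Goldie quotient ring $Q(R/P)$, which is finite over its center $Z$. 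The base extension $M \otimes_k K$ then relates to $Z \otimes_k K$, and since $Z$ is finitely generated over $k$, the ring $Z \otimes_k K$ is noetherian of bounded ``complexity'': it has finitely many minimal primes, and its total quotient ring is a finite product of fields each finitely generated over $K$. This forces $M \otimes_k K$ to have a critical composition series of length bounded by a constant depending only on $P$ (essentially the number of minimal primes of $Z \otimes_k K$, which is bounded uniformly). Running the induction on $\GKdim$ then yields that $R \otimes_k K$ is right noetherian.

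The main obstacle will be the ``if'' direction, and specifically making precise the passage from a critical $R$-module $M$ with prime annihilator $P$ to a statement about $Z \otimes_k K$ that controls the critical composition series length of $M \otimes_k K$ uniformly over all $K$. One must be careful that the ``lower-dimensional error terms'' — the difference between $M$ and a module genuinely over $Q(R/P)$ — do not themselves acquire uncontrolled critical length after base extension; this is precisely where the inductive hypothesis on smaller GK dimension is consumed. A clean way to organize this is to prove a lemma: if $A$ is a prime noetherian PI $k$-algebra whose Goldie quotient ring has center $Z$ finitely generated over $k$, then there is an integer $c = c(A)$ such that for every extension $K/k$ and every finitely generated right $A$-module $M$, the module $M \otimes_k K$ has a critical composition series of length at most $c \cdot (\text{length of a critical composition series of } M)$ plus a correction governed by lower-dimensional submodules. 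Granting such a lemma (whose proof rests on Vámos-type control of $Z \otimes_k K$), the theorem follows by feeding it into the inductive framework of Section~\ref{sec:ind}; the reverse direction is then just a matter of assembling the already-cited preservation properties (factor rings, localization, finite extensions) together with Theorem~\ref{thm:vamos}.
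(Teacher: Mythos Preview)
Your ``only if'' direction is correct and matches the paper's argument exactly: factor, localize, pass to the center via Kaplansky/Posner, and invoke V\'amos.

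For the ``if'' direction your architecture is right---reduce to Theorem~\ref{thm:reduce} by bounding critical composition series lengths---but two points are not handled. First, you never verify that $\GKdim_k(R) < \infty$, which is a hypothesis of the inductive method and is not automatic for noetherian PI algebras (an infinitely generated field extension of $k$ is noetherian PI with infinite GK dimension). The paper observes that for each minimal prime $P$, $\GKdim_k(R/P)$ equals the transcendence degree of $Z(Q(R/P))$ over $k$, which is finite by hypothesis; this gives $\GKdim_k(R) < \infty$.

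Second, your bounding mechanism via ``the number of minimal primes of $Z \otimes_k K$'' is the right intuition but not what is actually used, and your proposed lemma demands too much: you ask for a constant $c(A)$ uniform over \emph{all} extensions $K$, whereas Theorem~\ref{thm:reduce} only requires, for a \emph{fixed} $K$, a bound over the small subfields $L' \subseteq K$. The paper's route is cleaner: for a prime $R_L$-module $M$ with annihilator $P$ and $T = R_L/P$, localize at $X = Z(T) \setminus \{0\}$, so that criticality is preserved by Lemma~\ref{lem:loc-crit}(2). Then $Q(T) \otimes_L K$ is noetherian (this is exactly the ``stable residues'' hypothesis, which holds because the center is finitely generated over $k$ and the property propagates to $R_L$ by Lemma~\ref{lem:stable-res-fields}), so $MX^{-1} \otimes_L K$ has a critical composition series of some finite length $\ell$; by Lemma~\ref{lem:cs-ext} this $\ell$ bounds the length over every intermediate small $L'$. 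No direct count of minimal primes is needed, and no uniformity in $K$ is claimed or required.
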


Finally, in Section~\ref{sec:graded} we consider the special case of $\mb{N}$-graded $k$-algebras 
$R = \bigoplus_{n \geq 0} R_n$ which are locally finite, that is $\dim_k R_n < \infty$  for all $n \geq 0$.
We focus on such $R$ which are \emph{homologically smooth} over $k$.  The general definition is given in that section, but here we just note that 
 in the important special case that $R_0 = k^m$ for some $m$ (in particular if $R$ is \emph{connected}, that is $R_0 = k$),
then homological smoothness is equivalent to finite global dimension, by a result from \cite{RR1}.

Graded $R$-modules over a homologically smooth algebra of finite GK dimension have well-behaved Hilbert series, and 
therefore a well-defined multiplicity.  The multiplicity limits the growth of the length of critical composition series under base field 
extension, and this allows us to apply our inductive technique.  
\begin{theorem}[Theorem~\ref{thm:graded-stable}]
\label{thm:hs}
Let $R$ be a locally finite right noetherian $\mb{N}$-graded homologically smooth $k$-algebra.
Then $R$ is stably right noetherian over $k$.
\end{theorem}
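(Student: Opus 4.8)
The plan is to verify the hypotheses of the inductive method of Section~\ref{sec:ind} for $R$ and for all of its base field extensions simultaneously, the new ingredient being a multiplicity bound supplied by the Hilbert series theory of Section~\ref{sec:graded}.

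First I would record the reductions. Since $R$ is locally finite and $\mb{N}$-graded, it is right noetherian if and only if it is graded right noetherian, and the same holds for each $R_K := R \otimes_k K$, so it suffices to work throughout with finitely generated graded modules. One then checks that all standing hypotheses descend to each $R_K$: local finiteness because $\dim_K (R_K)_n = \dim_k R_n$; homological smoothness because tensoring a finite resolution of $R$ by finitely generated graded projective $R \otimes_k R^{\mathrm{op}}$-modules with $K$ over $k$ yields such a resolution of $R_K$ over $R_K \otimes_K R_K^{\mathrm{op}} = (R \otimes_k R^{\mathrm{op}}) \otimes_k K$; and, since $\HB_{M \otimes_k K}(t) = \HB_M(t)$ for every finitely generated graded $R$-module $M$, the entire Hilbert series and multiplicity apparatus of Section~\ref{sec:graded} survives over every $R_K$. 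In particular $\GKdim_K R_K = \GKdim_k R < \infty$, GK dimension of finitely generated graded modules over $R$ and over every $R_K$ is exact, finitely partitive and integer valued, and the multiplicity is unchanged by base field extension: $e(M \otimes_k K) = e(M)$.

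The heart of the matter is a uniform bound on how much a critical composition series can lengthen under base field extension. Over any of the rings above, the multiplicity is additive on short exact sequences of modules of a fixed GK dimension $d$, strictly positive on such modules, and satisfies $e(N') \le e(N)$ whenever $N' \subseteq N$; hence a critical composition series of a finitely generated graded module $N$ with $\GKdim N = d$ has at most $e(N)$ factors of GK dimension $d$. Applied to $M \otimes_k K$ this bounds the number of top-dimensional factors by $e(M)$, independently of $K$. For the remaining factors I would induct on $d = \GKdim_k M$, the case $d \le 0$ being immediate since then $M$, hence $M \otimes_k K$, is finite dimensional. In the inductive step, let $\tau \subseteq M$ be the largest graded submodule with $\GKdim \tau < d$ (finitely generated, as $R$ is noetherian), so that $M/\tau$ is $d$-homogeneous. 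Because base field extension \emph{does} preserve $d$-homogeneity (Section~\ref{sec:dim}), the module $(M/\tau) \otimes_k K$ is again $d$-homogeneous, so every graded submodule of $M \otimes_k K$ of GK dimension $< d$ lies inside $\tau \otimes_k K$; in particular the submodule of a critical composition series of $M \otimes_k K$ carrying all the lower-dimensional factors does. The inductive hypothesis applied to $\tau$ then bounds the length of that submodule uniformly in $K$, and adjoining the at most $e(M)$ top factors gives the required uniform bound. Feeding this bound into the inductive method of Section~\ref{sec:ind} then shows that $M \otimes_k K$ is right noetherian for every finitely generated graded $M$ and every extension $K$; the case $M = R$ is the theorem.

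The main obstacle is this dimension induction, and specifically the point that one must work with $d$-homogeneity rather than criticality — criticality being exactly the property that is \emph{not} preserved by base field extension — in order to locate the lower-dimensional part of $M \otimes_k K$ inside $\tau \otimes_k K$ and so invoke the inductive hypothesis; the companion technical task is to confirm that the Hilbert series and multiplicity package of Section~\ref{sec:graded} genuinely survives arbitrary base change, which rests on the stability of homological smoothness and of finiteness of GK dimension. The remaining step, which converts the uniform length bound into the noetherian conclusion — and which absorbs the \emph{a priori} possibility that $R \otimes_k K$ has not yet been shown noetherian — is precisely what the general method of Section~\ref{sec:ind} provides.
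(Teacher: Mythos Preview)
Your overall strategy matches the paper's: verify the hypotheses of Theorem~\ref{thm:reduce} in the graded category, using multiplicity to bound how much critical composition series can grow under base field extension. However, there is a genuine gap at the key step.

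You assert that ``the multiplicity is \dots\ strictly positive on such modules \dots; hence a critical composition series of a finitely generated graded module $N$ with $\GKdim N = d$ has at most $e(N)$ factors of GK dimension $d$.'' Strict positivity alone does not give this: the multiplicity of a nonzero $d$-critical module need not be at least $1$ (for instance, $k[x]$ with $\deg x = 2$ is $1$-critical with multiplicity $1/2$), so without a \emph{uniform} positive lower bound on the multiplicities of the factors, additivity yields no bound on their number. The paper supplies exactly this missing ingredient via Lemma~\ref{lem:granular}: there is a fixed $\epsilon > 0$, depending only on $A$, such that every nonzero finitely generated graded module over every $A_L$ has multiplicity a positive integer multiple of $\epsilon$. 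The bound on the number of top factors is then $e(M)/\epsilon$, not $e(M)$. Crucially, Lemma~\ref{lem:granular} is stated and proved only for \emph{elementary} $A$, because its proof requires the primitive idempotent decomposition of $1$ (and hence the matrix Hilbert series) to be stable under base field extension. This is why the paper begins by reducing to the elementary case via a finite field extension and a Morita equivalence through a full idempotent; you omit this reduction entirely, and without it the granularity argument is not available.

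A secondary remark: your induction on $d$ using the maximal small-dimensional submodule $\tau$ is correct, but it is more work than necessary. The paper instead observes (as in the proof of Theorem~\ref{thm:reduce-to-residues}) that the class of modules $M$ admitting the uniform bound $n(M)$ is closed under extensions by Lemma~\ref{lem:ccfacts}(2), so it suffices to treat critical $M$; then $M_{L'}$ is $d$-homogeneous by Proposition~\ref{prop:homogeneous}, all of its critical factors have dimension $d$, and the multiplicity-plus-granularity argument handles them in one stroke with no further induction.
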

In particular, our result applies to the important case of graded twisted Calabi-Yau algebras, which are homologically smooth as part of their definition (see Section~\ref{sec:graded}).
\begin{corollary}
\label{cor:cy}
Let $R$ be a locally finite right noetherian $\mb{N}$-graded twisted Calabi-Yau algebra.  Then $R$ is stably 
right noetherian over $k$.
\end{corollary}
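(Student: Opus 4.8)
The plan is to deduce this directly from Theorem~\ref{thm:graded-stable}, so the only real work is to verify that a graded twisted Calabi--Yau algebra satisfies the hypotheses of that theorem --- and in particular that it is homologically smooth over $k$. I would begin by recalling the definition to be set up in Section~\ref{sec:graded}: an $\mb{N}$-graded $k$-algebra $R$ is \emph{graded twisted Calabi--Yau of dimension $d$} if $R$ is homologically smooth over $k$ (that is, $R$ admits a finite-length resolution by finitely generated projective graded $R$-bimodules) and there is an invertible graded $R$-bimodule $U$ with $\calExt^i_{R^e}(R, R^e) = 0$ for $i \neq d$ and $\calExt^d_{R^e}(R, R^e) \cong U$, where $R^e = R \otimes_k R^{\mathrm{op}}$ is the enveloping algebra. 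The point I want to extract is simply that homological smoothness over $k$ is built into this definition; the existence of the Nakayama bimodule $U$ plays no role in the argument.

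Granting that, the proof is essentially one line: $R$ is locally finite, right noetherian, and $\mb{N}$-graded by hypothesis, and it is homologically smooth over $k$ because it is graded twisted Calabi--Yau, so every hypothesis of Theorem~\ref{thm:graded-stable} is met and we conclude that $R$ is stably right noetherian over $k$. I would also remark that no separate finite-Gelfand--Kirillov-dimension hypothesis needs to be imposed here, since none appears in the statement of Theorem~\ref{thm:graded-stable}.

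The only place an obstacle could conceivably hide is in matching conventions: one should confirm that the notion of homological smoothness in the twisted Calabi--Yau hypothesis --- a finite resolution of $R$ by finitely generated \emph{graded} projective bimodules --- agrees with (or is at least as strong as) the notion of homological smoothness used in Section~\ref{sec:graded} and invoked in Theorem~\ref{thm:graded-stable}. Since the graded notion is precisely the one actually used in that section, and in any case restricts to the ungraded one after forgetting the grading, this reconciliation is routine and the corollary follows.
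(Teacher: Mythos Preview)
Your proposal is correct and matches the paper's own argument exactly: the paper states the definition of graded twisted Calabi--Yau (which includes homological smoothness) and then observes that Corollary~\ref{cor:cy} follows directly from Theorem~\ref{thm:graded-stable} and the definition. Your additional remarks on conventions and the absence of a separate GK-dimension hypothesis are accurate but go slightly beyond what the paper bothers to say.
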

The class of graded twisted Calabi-Yau algebras includes, as a special case when the algebra is connected, 
the well-known class of Artin-Schelter (AS) regular algebras.  Note that we do not need to assume $\GKdim(R) < \infty$ 
in Theorem~\ref{thm:hs}, as it is implied by the noetherian hypothesis in this case.   

We hope that the main idea of our inductive method presented in Section~\ref{sec:ind} might have other applications.  
For example, in the theory of Hilbert schemes it is useful to consider graded algebras $R$ which are \emph{strongly noetherian} in the sense that $R \otimes_k C$ is noetherian for all commutative noetherian $k$-algebras $C$.   Stonger yet, one says $R$ is \emph{universally noetherian} if $R \otimes_k S$ is noetherian for all noetherian $k$-algebras $S$.  See \cite{ASZ} for more details.  We do not attempt to study these properties here, but we wonder if our method has applications to the study of strongly or universally noetherian algebras.   It could be, for example, that noetherian locally finite graded twisted Calabi-Yau algebras are always universally noetherian.

Another interesting open question relates to the example of Resco and Small of an affine right noetherian but not stably right noetherian algebra.   A claimed example by Medvedev with similar properties but over a field of characteristic $0$, which was mentioned in a note in \cite{RS}, was never published.  We recently viewed a draft of Medvedev's argument which appeared in correspondence with Resco, but it does not appear to be correct.  It would be interesting to settle one way or the other whether there can be an affine noetherian but not stably noetherian algebra in characteristic $0$.

\section*{acknowledgments}
We thank John Farina and Lance Small for many interesting and helpful conversations.

\section{GK dimension, critical composition series, and base field extension}
\label{sec:dim}

The main results of this paper work for certain algebras of finite Gelfand-Kirillov (GK) dimension.  
See \cite{KL} for more background on the following basic facts about  GK dimension.
Recall that for a $k$-algebra $R$, we define $\GKdim_k(R) = \sup_V \left(\limsup_{n \to \infty} \log_n(\dim_k V^n)\right)$, where $V$ runs over 
all finite-dimensional $k$-subspaces of $R$.  If the base field $k$ is understood, we write this as $\GKdim(R)$.
If $R$ is affine over $k$, then for any finite-dimensional subspace $V$ containing $1$ 
which generates $R$ as an algebra, in fact we have $\GKdim_k(R) =  \limsup_{n \to \infty} \log_n(\dim_k V^n)$.
For any right $R$-module $M = M_R$ over the  $k$-algebra $R$, we define 
\[
\GKdim_k (M_R) = \sup_{W, V} \left(\limsup_{n \to \infty} \log_n(\dim_k W V^n)\right),
\]
 where $W$ runs over finite dimensional $k$-subspaces of $M$ and $V$ over finite-dimensional $k$-subspaces of $R$.
Again, if $M$ is finitely generated as a right $R$-module and $R$ is affine, in fact we have 
$\GKdim_k(M) = \limsup_{n \to \infty} \log_n(\dim_k W V^n)$, for any $W$ containing a generating set of $M$ and any 
finite-dimensional subspace $V$ of $R$ which contains $1$ and generates $R$ as an algebra.  
It is convenient for us to make the convention $\GKdim_k(0) = -1$ for the zero module.

It is standard that for any short exact sequence $0 \to M \to N \to P \to 0$ of right $R$-modules, $\GKdim(M) \leq \GKdim(N)$ and $\GKdim(P) \leq \GKdim(N)$.  GK dimension is called \emph{exact} for right $R$-modules if for all such exact sequences we in fact have $\GKdim(N) = \max(\GKdim(M), \GKdim(P))$.  It is easy to see that it is equivalent to require this condition for short exact sequences of finitely generated modules.  While examples of $k$-algebras for which GK dimension is not exact are known, one generally has exactness for examples one encounters in practice.  In particular, to our knowledge no noetherian $k$-algebra $R$ is known for which exactness fails.  We will assume 
that we have a $k$-algebra $R$ with exact GK dimension for the rest of this section, since without this basic property the theory of critical modules and critical composition series which we want to use does not work well.

Let $M$ be a right $R$-module with $\GKdim(M) < \infty$.  
We call $M$ \emph{$d$-critical} if $\GKdim(M) = d$ while $\GKdim(M/N) < d$ for all nonzero submodules $N$ of $M$.  The module $M$ is \emph{$d$-homogeneous} if $\GKdim(N) = d$ for all nonzero submodules $N$ of $M$.  
A $d$-critical module $M$ is $d$-homogeneous (using exactness) and in fact even uniform, in other words, any two nonzero submodules of $M$ have nonzero intersection.  Also, a nonzero submodule of a $d$-critical module is again $d$-critical.
A \emph{critical composition series} for an $R$-module $M$ with $\GKdim_k(M) < \infty$ is a finite filtration of submodules  $M_0 = 0 \subseteq M_1 \subseteq M_2 \subseteq \dots \subseteq M_n = M$, such that the corresponding factors $M_i/M_{i-1}$ are $d_i$-critical for all $1 \leq i \leq n$, and where $d_i \leq d_{i+1}$ for all $1 \leq i \leq n-1$.  The number $n$ is called the \emph{length} of the critical composition series, and 
we also write $\ell(M)$ for $n$.  By standard arguments, any two critical composition series of $M$ have the same length, and there is some pairing of their factors so that paired factors contain nonzero isomorphic submodules.  See the second half of \cite[Theorem 15.9]{GW}, which is stated for Krull dimension but whose proof works also for GK dimension (again, using exactness).

On the other hand, unlike the case of Krull dimension treated in \cite[Section 15]{GW}, it is not clear that a noetherian module 
$M$ must have a critical composition series for GK dimension, without a further assumption. 
We  say that GK dimension of $R$-modules is \emph{finitely partitive} if given any finitely generated right 
module $N$ with $\GKdim(N) = d$, there is an upper bound $n_0$ (depending on $N$) on the lengths $n$ of descending chains 
$N =N_0 \supsetneq N_1 \supsetneq N_2 \supsetneq \dots \supsetneq N_n$ with $\GKdim(N_i/N_{i+1}) = d$ for all 
$0 \leq i \leq n-1$.  If GK dimension of $R$-modules is finitely partitive (and exact), then every noetherian right $R$-module $M$ with 
$\GKdim(M) < \infty$ has a critical 
composition series, by the same proof as for the Krull dimension treated in the first half of \cite[Theorem 15.9]{GW}, using 
\cite[Lemma 15.8]{GW}.  In fact, it suffices to know that for each noetherian $R$-module $N$ there are no \emph{infinite} descending chains $N =N_0 \supsetneq N_1 \supsetneq N_2 \supsetneq \dots$ with $\GKdim(N_i/N_{i+1}) = \GKdim(N)$ 
for all $i \geq 0$.  Again, to our knowledge there are no known examples of noetherian $k$-algebras of finite GK dimension for which GK dimension is not finitely partitive.  

Here are some basic facts about critical composition series which we will find useful below.
\begin{lemma}
\label{lem:ccfacts}
Let $R$ be a $k$-algebra and assume that GK dimension of finitely generated $R$-modules is exact and finitely partitive.  Let $0 \to M \to P \to N \to 0$ be an exact sequence of noetherian right $R$-modules with $\GKdim_k(P) < \infty$.
\begin{enumerate}
\item $\ell(M) \leq \ell(P)$.
\item $\ell(P) \leq \ell(M) + \ell(N)$.
\end{enumerate}
\end{lemma}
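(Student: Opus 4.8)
The plan is to prove both statements by using the standard structure theory of critical composition series recalled just above, namely that any two critical composition series of a noetherian module $M$ (with $\GKdim_k M < \infty$) have the same length $\ell(M)$, together with the fact that a nonzero submodule of a $d$-critical module is again $d$-critical. The key observation is that, since $M$ is noetherian and lies in the finite-GK-dimension module $P$, all three modules $M$, $N$, $P$ have critical composition series by the exactness and finite partitivity hypotheses.

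For part (1): I would start with a critical composition series $0 = P_0 \subsetneq P_1 \subsetneq \dots \subsetneq P_n = P$ of $P$ with $n = \ell(P)$, and intersect it with $M$, obtaining a filtration $M_i := P_i \cap M$ of $M$. Each factor $M_i / M_{i-1}$ embeds in $P_i / P_{i-1}$ via the natural map, and hence is either zero or isomorphic to a nonzero submodule of a $d_i$-critical module, so is $d_i$-critical. Discarding the indices where $M_i/M_{i-1} = 0$ leaves a filtration of $M$ whose nonzero factors are critical, with dimensions still weakly increasing (since we only deleted terms from a weakly increasing sequence); this is a critical composition series of $M$ of length at most $n$. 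Since $\ell(M)$ is well-defined independent of the choice of series, $\ell(M) \leq n = \ell(P)$.

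For part (2): I would combine a critical composition series $0 = M_0 \subsetneq \dots \subsetneq M_s = M$ of $M$ (length $s = \ell(M)$) with one of $N$, say $0 = N_0 \subsetneq \dots \subsetneq N_t = N$ (length $t = \ell(N)$). Pulling the $N_j$ back along the surjection $P \to N$ gives submodules $M = \widetilde N_0 \subseteq \widetilde N_1 \subseteq \dots \subseteq \widetilde N_t = P$ with $\widetilde N_j / \widetilde N_{j-1} \cong N_j / N_{j-1}$ critical. Concatenating the $M_i$ with the $\widetilde N_j$ yields a filtration of $P$ of length $s + t$ all of whose factors are critical. The one issue is that this combined filtration need not have weakly increasing dimensions at the junction (the last factor of the $M$-series has some dimension $d$, while the first factor of the $N$-part could have smaller dimension). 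So the filtration is not literally a critical composition series as defined. I would handle this by noting that one can always \emph{rearrange} a filtration with critical factors into one with weakly increasing dimensions without changing the length — e.g. by repeatedly applying the elementary fact (essentially \cite[Lemma 15.8]{GW} / the argument used to construct critical composition series in the first place) that if $A/B$ is $d$-critical and $C/A$ is $d'$-critical with $d' < d$, then $A/B$ and $C/A$ can be replaced by two critical factors in the other dimension order; iterating gives a genuine critical composition series of $P$ of the same length $s+t$. Hence $\ell(P) \leq s + t = \ell(M) + \ell(N)$.

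The main obstacle is the reordering argument in part (2): making precise that a finite filtration all of whose factors are critical can be replaced by one with weakly increasing critical dimensions, of the same length. I expect this to follow from the same bubble-sort style argument (using exactness and the uniform/critical properties) that underlies the existence of critical composition series in the first place, so I would either cite that or include a short lemma; in any case it is routine given the tools already set up, so I would not belabor it. Parts (1) and the concatenation in (2) are then entirely straightforward.
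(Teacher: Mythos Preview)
Your proof of part (1) is correct and identical to the paper's.

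For part (2) your overall strategy---concatenate a critical composition series of $M$ with one of $N$ pulled back to $P$, then reorder---is different from the paper's, which proceeds by induction on $\ell(M)$, reducing to the base case where $M$ itself is $d$-critical. Your approach can be made to work, but two points need care. First, the claim that the reordering happens ``without changing the length'' is not literally correct: if $A$ is $d$-critical, $C/A$ is $d'$-critical with $d' < d$, and $A$ happens to be essential in $C$, then $C$ is already $d$-critical and the two factors collapse to one. This only helps the inequality, so the conclusion survives, but the swap is not always a swap. Second, and more substantively, the swap itself is not as elementary as you suggest, and \cite[Lemma 15.8]{GW} is not the right reference: to see that, after choosing $Q \subseteq C$ maximal with $Q \cap A = 0$, the quotient $C/Q$ is again $d$-critical, one needs exactly the essential-extension argument (a $d$-critical module essentially extended by something of smaller dimension stays $d$-critical). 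This is precisely the content of the paper's base case $\ell(M)=1$, so what you have deferred as routine is in fact the heart of the proof.

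In short: the concatenation-plus-bubble-sort route is valid and arguably conceptually appealing, but the paper's induction on $\ell(M)$ isolates the one nontrivial step (the essential-extension lemma) more cleanly, whereas your version hides it inside the ``reordering'' and mislabels it as standard.
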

\begin{proof}
(1)  Given a critical composition series $0 = P_0 \subsetneq P_1 \subsetneq \dots \subsetneq P_n = P$ for $P$, where 
$P_i/P_{i-1}$ is $d_i$-critical, then 
setting $M_n = P_n \cap M$, the factor $M_i/M_{i-1}$ is isomorphic to a submodule of $P_i/P_{i-1}$, and hence is either $0$ 
or else is also $d_i$-critical.  Thus after removing repeats, the series given by $\{M_n \}$ gives a critical composition 
series for $M$, which has length at most $\ell(P)$.

(2)  Identify $M$ with a submodule of $P$ and $N$ with $P/M$.  We induct on $\ell(M)$.  The result is trivial if $M = 0$.  Suppose that $\ell(M) = 1$, in other words $M$ is $d$-critical for some $d$.  Let $N'$ be the unique largest submodule of $N$ such that $\GKdim(N') < d = \GKdim(M)$ (it is possible that $N' = 0$).   Write $N' = L/M$ for some $M \subseteq L \subseteq P$.  Note that $\GKdim(L) = d$ by exactness
and that $P/L \cong N/N'$ has no submodules of GK dimension less than $d$.  Thus a critical composition series of $L$ can be followed by  
a critical composition series of $N/N'$ to obtain a critical composition series of $P$, so 
$\ell(P) = \ell(N/N') + \ell(L)$.  Similarly, a critical composition series of $N'$ followed by a critical composition series of $N/N'$ gives 
a critical composition series of $N$, so $\ell(N) = \ell(N') + \ell(N/N')$.  Thus it suffices to show that $\ell(L) \leq \ell(M) + \ell(N')$.
By Zorn's Lemma, we can choose a submodule $Q \subseteq L$ maximal with respect to $Q \cap M = 0$.  Then $Q$ is isomorphic to a submodule of $N'$, so $\ell(Q) \leq \ell(N')$ by part (1).  By the definition of $Q$, $M \cong (M+Q)/Q \subseteq L/Q$ is easily seen to be an essential extension of $M$, extended by the module $L/(M + Q)$, which is a homomorphic image of $N'$, and so 
has GK dimension less than $d$.  Since $M$ is $d$-critical, it follows that $L/Q$ is again $d$-critical.  Now a critical composition series for $Q$ followed by $L/Q$ gives  a
critical composition series for $L$, so $\ell(L) = 1 + \ell(Q) \leq 1 + \ell(N') = \ell(M) + \ell(N')$ as required.

To complete the induction step, let $\ell(M) = n$ for some $n \geq 2$, and suppose the result is true whenever $\ell(M) \leq n-1$.
Letting $0 = M_0 \subseteq M_1 \subseteq \dots \subseteq M_n = M$ be a critical composition series for $M$, we have a short exact sequence $0 \to M/M_1 \to P/M_1 \to P/M \to 0$ for which $\ell(M/M_1) = n-1$ and so $\ell(P/M_1) \leq \ell(M/M_1) + \ell(P/M) = n-1 + \ell(P/M)$ by the induction hypothesis.   We also have a short exact sequence $0 \to M_1 \to P \to P/M_1 \to 0$ to which the induction hypothesis applies
and so $\ell(P) \leq 1 + \ell(P/M_1) \leq 1 + n-1 + \ell(P/M) = n + \ell(P/M) = \ell(M) + \ell(P/M)$.
\end{proof}

In some cases, central localization preserves the property of a module being critical.
\begin{lemma}
\label{lem:loc-crit}
Let $R$ be a $k$-algebra with $\GKdim_k(R) = d < \infty$,
 and let $X$ be a multiplicative system of nonzero central regular elements in $R$.  
\begin{enumerate}
\item Let $M$ be an $X$-torsionfree right $R$-module.  Then $\GKdim_k(M_R) = \GKdim_k((MX^{-1})_{RX^{-1}})$.
\item Let $M$ be a finitely generated $d$-critical right $R$-module.  Then $MX^{-1}$ is a $d$-critical $RX^{-1}$-module.
\end{enumerate}
\end{lemma}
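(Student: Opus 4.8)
The plan is to deduce both parts from two standard inputs: that the Gelfand--Kirillov dimension of a module is unchanged when the base ring is enlarged to a central localization of itself, and that central localization at $X$ is exact with the usual contraction--extension correspondence on submodules. As a preliminary step I would record that for \emph{any} $RX^{-1}$-module $V$ one has $\GKdim_k(V_{RX^{-1}}) = \GKdim_k(V_R)$, where on the right $V$ is regarded as an $R$-module by restriction of scalars. The inequality $\GKdim_k(V_R) \le \GKdim_k(V_{RX^{-1}})$ is immediate, since every finite-dimensional $k$-subspace of $R$ is also one of $RX^{-1}$ and so the supremum defining the right-hand side runs over a larger family; the reverse inequality is a common-denominator computation: a finite-dimensional $U \subseteq RX^{-1}$ satisfies $U \subseteq U' x^{-1}$ for some finite-dimensional $U' \subseteq R$ and some $x \in X$, and since right multiplication by the central unit $x^{-n}$ is a $k$-linear automorphism of $V$, $\dim_k(W U^n) \le \dim_k(W U'^n x^{-n}) = \dim_k(W U'^n)$ for every finite-dimensional $W \subseteq V$.

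For part (1), $X$-torsionfreeness of $M$ makes $M \to MX^{-1}$ injective and presents $MX^{-1}$ as the directed union $\bigcup_{x \in X} M x^{-1}$ of $R$-submodules, each isomorphic to $M$ as a right $R$-module via right multiplication by $x^{-1}$ (using that $x$ is central). Hence $\GKdim_k(M_R) \le \GKdim_k((MX^{-1})_R)$ trivially, and for the reverse I would use that the GK dimension of a module is the supremum of the GK dimensions of its submodules generated by a finite-dimensional $k$-subspace, together with the fact that any such submodule of $MX^{-1}$, having finitely many generators, lies inside a single $M x^{-1}$. This gives $\GKdim_k((MX^{-1})_R) = \GKdim_k(M_R)$, which the preliminary step upgrades to $\GKdim_k((MX^{-1})_{RX^{-1}}) = \GKdim_k(M_R)$.

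For part (2) I would first observe that the conclusion already forces $M$ to be $X$-torsionfree (a $d$-critical module with nonzero $X$-torsion localizes to a module of GK dimension $< d$), so I assume this, as holds in the intended applications. By part (1), $\GKdim_k((MX^{-1})_{RX^{-1}}) = d$. Given a nonzero $RX^{-1}$-submodule $N' \subseteq MX^{-1}$, put $N = N' \cap M$; a short check shows $N' = NX^{-1}$ and $N \neq 0$ (if $0 \neq m x^{-1} \in N'$ then $m = (m x^{-1}) x \in N$ is nonzero since $x$ acts invertibly on $MX^{-1}$). Criticality of $M$ gives $\GKdim_k(M/N) < d$, and exactness of localization gives $(MX^{-1})/N' \cong (M/N) X^{-1}$. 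Finally, since quotienting $M/N$ by its $X$-torsion submodule $\overline{T}$ does not alter the localization, part (1) applied to the $X$-torsionfree module $(M/N)/\overline{T}$ yields $\GKdim_k\big((MX^{-1})/N'\big) = \GKdim_k\big((M/N)/\overline{T}\big) \le \GKdim_k(M/N) < d$, which is exactly the criticality condition for $MX^{-1}$. I expect the only real obstacle --- indeed the only point requiring care --- to be that $M/N$ need not be $X$-torsionfree even when $M$ is, so part (1) cannot be applied to it directly and one must first pass to its torsionfree quotient; everything else is routine bookkeeping with common denominators, central units, and exactness of central localization.
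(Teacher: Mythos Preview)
Your treatment of part (1) is correct and is essentially the paper's argument spelled out (the paper simply cites \cite[Proposition~4.2]{KL} for the reverse inequality).  In part (2), however, there is a genuine gap: you do not prove that $M$ is $X$-torsionfree.  Your parenthetical observes only that the \emph{conclusion} would fail if $M$ had nonzero $X$-torsion --- that is the wrong direction --- and ``so I assume this, as holds in the intended applications'' is not a proof of the lemma as stated.

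The missing step is exactly where the hypothesis $\GKdim_k(R) = d$ is used, and you never invoke it.  The paper argues as follows: if $0 \neq m \in M$ satisfies $mx = 0$ for some $x \in X$, then $mR \cong R/J$ with $x \in J$, so $\GKdim_k(mR) \le \GKdim_k(R/xR) < \GKdim_k(R) = d$, the strict inequality holding because $x$ is regular in $R$ \cite[Proposition~5.1(e)]{KL}.  This contradicts the $d$-homogeneity of the $d$-critical module $M$.  Once torsionfreeness is in hand, your handling of the quotient --- passing to the $X$-torsionfree quotient of $M/N$ before applying part (1) --- is a perfectly valid alternative to the paper's device of choosing $N$ so that $M/N$ is already $X$-torsionfree (via \cite[Theorem~10.15]{GW}).
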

\begin{proof}
(1) Since the elements in $X$ are central, $X$ is automatically a right denominator set in $R$.  
We assume that $M$ is $X$-torsionfree, in other words that if $m \in M$ and $x \in X$ with $mx = 0$, then $m = 0$.  
This implies that the natural $R$-module map $M \to MX^{-1}$ is an embedding, and so 
$\GKdim_k(M_R) \leq \GKdim_k((MX^{-1})_R) \leq \GKdim_k((MX^{-1})_{RX^{-1}})$.  The reverse inequality follows 
from a similar argument as in \cite[Proposition 4.2]{KL}, except applied in a module instead of an algebra.

(2) We claim that $M$ is $X$-torsionfree.  Suppose not, and choose $0 \neq m \in M$ such that $mx = 0$ for some $x \in X$.  Since $M$ is $d$-critical, it is $d$-homogeneous, so $\GKdim_k( mR) = d$.  On the other hand, $mR \cong R/J$ for the right ideal 
$J = \operatorname{r.ann}_R(m)$.  Since $x$ is a regular element in $R$, $\GKdim_k(R/xR) < \GKdim_k(R) = d$ by 
\cite[Proopsition 5.1(e)]{KL}.  Then since $x \in J$, $\GKdim_k(mR) = \GKdim_k(R/J) \leq \GKdim_k(R/xR) < d$, a contradiction.  
So $M$ is $X$-torsionfree as claimed.  Thus $\GKdim_k((MX^{-1})_{RX^{-1}}) = d$ by part (1).  Every nonzero $RX^{-1}$-submodule of $MX^{-1}$ has the form $NX^{-1}$, where $N$ is a nonzero $R$-submodule of $M$ such that $M/N$ is $X$-torsionfree 
\cite[Theorem 10.15]{GW}.  Then $MX^{-1}/NX^{-1} \cong (M/N)X^{-1}$, and since $\GKdim_k(M/N) < d$, we have 
$\GKdim_k((M/N)X^{-1}) < d$ by part (1).  So $MX^{-1}$ is $d$-critical.
\end{proof}

Our aim in the rest of this section is to investigate the interaction of GK dimension with extension of the base field.  
Suppose that $M$ is a right module over the $k$-algebra $R$.  If $k \subseteq K$ is any field extension, we 
may consider the $K$-algebra $R_K = R \otimes_k K$.  We identify $R$ with the subalgebra 
$\{ (r \otimes 1) | r \in R \} \subseteq R \otimes_k K$.   We have the extended right $R_K$-module $M_K = M \otimes_k K$, which we can also think of as an $R$-module by restriction.

We have the following standard lemma.  We omit the proof, which follows easily from the definitions of GK dimension.
\begin{lemma}
\label{lem:GKbasics}
Let $R$ be a $k$-algebra and $M$ a right $R$-module.  Let $k \subseteq K$ be a field extension.
\begin{enumerate}
\item $\GKdim_k(M_R) = \GKdim_K( (M_K)_{R_K})$.
\item Suppose that $[K:k] < \infty$.  Then for any right module $N$ over $R_K$ we have $\GKdim_k(N_R) = \GKdim_k(N_{R_K}) = \GKdim_{K}(N_{R_K})$.
\end{enumerate}
\end{lemma}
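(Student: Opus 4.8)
The plan is to check both parts directly from the definition $\GKdim_k(M_R)=\sup_{W,V}\bigl(\limsup_{n\to\infty}\log_n\dim_k WV^n\bigr)$, matching up the finite-dimensional test subspaces available for the two base fields (or the two coefficient algebras) under consideration.

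For part (1), first I would establish $\GKdim_k(M_R)\le\GKdim_K((M_K)_{R_K})$: if $W\subseteq M$ and $V\subseteq R$ are finite-dimensional $k$-subspaces, then $W\otimes_k K\subseteq M_K$ and $V\otimes_k K\subseteq R_K$ are finite-dimensional over $K$, and a short computation gives $(W\otimes_k K)(V\otimes_k K)^n=(WV^n)\otimes_k K$, so $\dim_K$ of the left-hand side equals $\dim_k WV^n$ and the two $\limsup\log_n$ quantities coincide. For the reverse inequality, given finite-dimensional $K$-subspaces $W'\subseteq M_K$ and $V'\subseteq R_K$, note that only finitely many elements of $M$ (resp.\ of $R$) occur among the finitely many pure tensors needed to span $W'$ (resp.\ $V'$); hence $W'\subseteq W\otimes_k K$ and $V'\subseteq V\otimes_k K$ for suitable finite-dimensional $k$-subspaces $W\subseteq M$ and $V\subseteq R$. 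Then $W'(V')^n\subseteq(WV^n)\otimes_k K$, so $\dim_K W'(V')^n\le\dim_k WV^n$, and taking suprema over all such $W',V'$ completes part (1).

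For part (2), I would treat the two claimed equalities in turn. The equality $\GKdim_k(N_{R_K})=\GKdim_K(N_{R_K})$ is the statement that GK dimension of a module over a $K$-algebra is unaffected by restricting scalars to a subfield $k$ of finite index: any finite-dimensional $K$-subspace of $N$ or of $R_K$ is also finite-dimensional over $k$, and conversely the $K$-span of a finite-dimensional $k$-subspace is still finite-dimensional over $K$; since $\dim_k=[K:k]\cdot\dim_K$ on $K$-subspaces and $\log_n([K:k]\cdot t)-\log_n t\to0$, the relevant $\limsup$'s are unchanged and the two suprema agree. For $\GKdim_k(N_R)=\GKdim_k(N_{R_K})$, the inequality $\le$ is immediate, since every finite-dimensional $k$-subspace of $R$ is also one of $R_K$ and the product $WV^n$ is formed in exactly the same way. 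For $\ge$, fix a $k$-basis $e_1,\dots,e_m$ of $K$ with $e_1=1$; given finite-dimensional $k$-subspaces $W\subseteq N$ and $V\subseteq R_K$, choose a finite-dimensional $k$-subspace $V_0\subseteq R$ with $1\in V_0$ and $V\subseteq V_0\otimes_k K$, so that $V^n\subseteq V_0^n\otimes_k K$. Using that each $1\otimes e_i$ is central in $R_K$, one rewrites $w\,(v\otimes e_i)=\bigl(w(1\otimes e_i)\bigr)(v\otimes1)$ and concludes $WV^n\subseteq\widetilde W\,V_0^n$, where $\widetilde W=\sum_{i=1}^m W(1\otimes e_i)$ is a finite-dimensional $k$-subspace of $N$ and the right-hand product is taken over $R$; comparing dimensions and passing to suprema gives $\GKdim_k(N_{R_K})\le\GKdim_k(N_R)$.

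I do not anticipate any real obstacle: everything follows from the definitions. The only points needing a little care are distinguishing products that are $K$-subspaces from those that are merely $k$-subspaces, and the bookkeeping in part (2) showing that a finite-dimensional $k$-subspace of $R_K$ can be absorbed into $V_0\otimes_k K$ for a finite-dimensional $V_0\subseteq R$; these are routine, which is presumably why the authors omit the proof.
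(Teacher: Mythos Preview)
Your proposal is correct and follows essentially the same approach as the paper, which simply says the result ``follows easily from the definitions of GK dimension'' (and whose suppressed proof proceeds exactly as you do in part~(1) and in the second equality of part~(2)). The only minor difference is in the inequality $\GKdim_k(N_{R_K})\le\GKdim_k(N_R)$: the paper's draft argument writes $R_K=Ra_1+\dots+Ra_m$ as a finite left $R$-module and uses the standard swap $GU\subseteq VG$ (as in \cite[Lemma~5.3(b)]{KL}) to get $U^n\subseteq V^nG$, whereas you exploit the centrality of $K$ in $R_K$ to move the $e_i$ factors into the module side via $\widetilde W=\sum_i W(1\otimes e_i)$; both are equally valid and equally elementary.
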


The previous result shows that base field extension preserves the value of the GK dimension of a module.  If 
$M$ is a homogeneous or critical module, however, it is less obvious when one should expect this property to be preserved.  In fact, it is easy to find examples where base field extension destroys criticality; for example, 
the $\mb{R}$-algebra $\mb{C}$ is a $0$-critical module over itself but $\mb{C} \otimes_{\mb{R}} \mb{C} \cong \mb{C} \oplus \mb{C}$ is not critical.  We see next, however, that the homogeneous property is always preserved by base field extension.
\begin{proposition}
\label{prop:homogeneous}
Let $R$ be a $k$-algebra and let $M$ be a right $R$-module. 
Let $k \subseteq K$ be a field extension, and let $R_K = R  \otimes_k K$ and $M_K = M \otimes_k K$. 
If $M$ is $d$-homogeneous $R$-module with respect to GK dimension over $k$, then $M_K$ is a $d$-homogeneous $R_K$-module for GK dimension over $K$.
\end{proposition}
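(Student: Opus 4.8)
The plan is to reduce the statement to a concrete claim about submodules of $M_K$ of the form "generated by finitely many elements," and then exploit the fact that any such submodule, together with finitely many generators, is already defined over an intermediate field that is \emph{finitely generated} over $k$, where we have more control. Concretely, let $0 \neq N \subseteq M_K$ be any nonzero $R_K$-submodule; I must show $\GKdim_K(N) = d$. Since $\GKdim_K(N) = \sup$ over finitely generated submodules $N' \subseteq N$, and any such $N'$ is contained in a principal-type submodule, it suffices to show that every nonzero cyclic $R_K$-submodule $N' = mR_K$ of $M_K$ has $\GKdim_K(N') = d$. The inequality $\GKdim_K(N') \le \GKdim_K(M_K) = d$ is automatic (GK dimension of a submodule never exceeds that of the module), so the content is the lower bound $\GKdim_K(N') \ge d$.

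First I would fix $0 \neq m \in M_K = M \otimes_k K$ and write $m = \sum_{i=1}^{s} v_i \otimes \lambda_i$ with $v_i \in M$ and $\lambda_i \in K$. Let $k_0 = k(\lambda_1, \dots, \lambda_s) \subseteq K$, a finitely generated field extension of $k$; then $m \in M \otimes_k k_0 = M_{k_0}$, and $m R_{k_0}$ is a nonzero $R_{k_0}$-submodule of $M_{k_0}$. The next step is to descend further: $k_0$ is finitely generated over $k$, so there is a transcendence basis $t_1, \dots, t_r$ of $k_0/k$ with $k_0$ \emph{finite} over $k_1 := k(t_1, \dots, t_r)$. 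Here the purely transcendental extension $k \subseteq k_1$ is easy to handle directly from the definition of GK dimension (one checks that a $k_1$-subspace of $M_{k_1}$ spanning it is built from a $k$-subspace of $M$ times powers of a subspace of $R$, exactly as in Lemma~\ref{lem:GKbasics}(1), so $d$-homogeneity passes to $M_{k_1}$ — indeed this special case is essentially already contained in the proof of the purely-transcendental theorem). So the crux is reduced to the finite extension $k_1 \subseteq k_0$.

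For the finite extension step, suppose $M$ is $d$-homogeneous over a field $F$ and $F \subseteq F'$ is finite; I must show $M_{F'}$ is $d$-homogeneous over $F'$. Let $0 \neq N \subseteq M_{F'}$ be an $F'$-submodule, and restrict scalars to view $N$ as an $R_F$-submodule of $M_{F'}$ (viewed as an $R_F$-module). By Lemma~\ref{lem:GKbasics}(2), since $[F':F] < \infty$, we have $\GKdim_{F'}(N_{R_{F'}}) = \GKdim_F(N_{R_F})$. Now pick any nonzero element $0 \neq n \in N$; then $n R_F$ is a nonzero $R_F$-submodule of $M_{F'}$. The key observation is that $M_{F'}$, regarded as an $R_F$-module, is a direct sum of $[F':F]$ copies of $M$ (since $F'$ is a free $F$-module of that rank), hence $M_{F'}$ is itself $d$-homogeneous over $F$: any nonzero $R_F$-submodule of a finite direct sum of $d$-homogeneous modules has GK dimension $d$, which follows from exactness of GK dimension (the submodule embeds in the sum, so has dimension $\le d$; and it surjects onto, or has nonzero image in, some coordinate copy of $M$, forcing dimension $\ge d$ by exactness). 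Therefore $\GKdim_F(n R_F) = d$, hence $\GKdim_F(N_{R_F}) \ge d$, hence $\GKdim_{F'}(N) = d$; the reverse inequality $\le d$ is automatic since $\GKdim_{F'}(M_{F'}) = \GKdim_F(M) = d$ by Lemma~\ref{lem:GKbasics}(1). This proves $M_{F'}$ is $d$-homogeneous.

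The main obstacle is the finite-extension step, specifically the claim that a finite direct sum of $d$-homogeneous $R_F$-modules is again $d$-homogeneous — this is where exactness of GK dimension is genuinely used (to get the lower bound on a submodule's dimension from the fact that it has nonzero image in some summand). Everything else is bookkeeping: the descent to a finitely generated intermediate field is routine, and the purely transcendental case is elementary and already implicitly available. One subtlety to be careful about is that when restricting scalars from $R_{F'}$ to $R_F$, an arbitrary $R_{F'}$-submodule $N$ need not be of the form (something) $\otimes_F F'$, but we never need that: we only need that $N$ contains a nonzero element and is an $R_F$-submodule of the $d$-homogeneous module $M_{F'}|_{R_F}$, which is enough to pin down its GK dimension.
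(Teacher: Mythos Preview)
Your reduction to a finitely generated intermediate field $k_0$ and your handling of the finite extension step are fine and match the paper's argument (the paper also observes that $M_K \cong M^{[K:k]}$ as an $R$-module and that a finite direct sum of $d$-homogeneous modules is $d$-homogeneous; incidentally this last fact does \emph{not} need exactness, since a nonzero submodule already surjects onto a nonzero submodule of some summand, and GK dimension never increases under quotients).

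The gap is the purely transcendental step. You write that for $k \subseteq k_1 = k(t_1,\dots,t_r)$ the passage of $d$-homogeneity to $M_{k_1}$ is ``easy to handle directly from the definition'' and ``essentially already contained in the proof of the purely-transcendental theorem.'' Neither claim holds up. Lemma~\ref{lem:GKbasics}(1) only tells you $\GKdim_{k_1}(M_{k_1}) = d$; it says nothing about an arbitrary $R_{k_1}$-submodule $N \subseteq M_{k_1}$, which need not be of the form $N_0 \otimes_k k_1$. And any appeal to Proposition~\ref{prop:critical} or Theorem~\ref{thm:purely-noeth} would be circular, since both of those invoke Proposition~\ref{prop:homogeneous} in their proofs.

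This case is in fact the heart of the argument. The paper treats $K = k(x)$ as follows: given $0 \neq z \in M_K$, clear denominators so $z = \sum_{i=0}^p m_i \otimes x^i \in M \otimes_k k[x]$, and among all nonzero elements of $zR_K \cap (M \otimes_k k[x])$ pick one with the fewest nonzero coefficients $m_i$. A short argument then shows all nonzero $m_i$ have the \emph{same} right annihilator $J \subseteq R$, and a second short argument shows $\operatorname{r.ann}_{R_K}(z) = J \otimes_k K$. Hence $zR_K \cong (R/J) \otimes_k K$, and since $R/J \cong m_p R \subseteq M$ has GK dimension $d$ by $d$-homogeneity of $M$, Lemma~\ref{lem:GKbasics}(1) gives $\GKdim_K(zR_K) = d$. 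Without this (or an equivalent) argument showing that a nonzero cyclic $R_{k(x)}$-submodule of $M_{k(x)}$ is actually extended from an $R$-module sitting inside $M$, your proof of the transcendental case is missing its key idea.
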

\begin{proof}
 Let $\GKdim_k(M_R) = d$, so $\GKdim_K((M_K)_{R_K}) = d$ by Lemma~\ref{lem:GKbasics}.  
Suppose that $M_K$ fails to be $d$-homogeneous.  Then there is 
$z = \sum_{i=1}^p m_i \otimes a_i \subseteq M \otimes_k K$ such that $\GKdim_K ((z R_K)_{R_K}) < d$.
Now $\ell = k(a_1, \dots, a_p) \subseteq K$, where $k \subseteq \ell$ is 
a finitely generated field extension.  We can think of 
$z \in M \otimes_k \ell$, and hence we get
\[
z R_K = z(R \otimes_k K) = z (R_{\ell} \otimes_{\ell} K) = (z R_{\ell}) \otimes_{\ell} K.
\]
Thus $\GKdim_K ((z R_K)_{R_K}) = \GKdim_{\ell}((zR_{\ell})_{R_{\ell}}) < d$ by Lemma~\ref{lem:GKbasics}, and so $M_{\ell} = M \otimes_k \ell$ also fails to 
be $d$-homogeneous over $R_{\ell}$.  

Changing notation back, we may assume from now on that $k \subseteq K$ is a finitely generated field extension.  By induction, it is enough to consider the cases where $k \subseteq K$ is an extension with $[K: k] < \infty$, or where $k \subseteq K = k(x)$ is a purely transcendental extension in one variable.    Consider first 
the case where $[K: k] < \infty$.
Let $N$ be a nonzero $R_K$-submodule of $M_K$.  Then we have $\GKdim_k(N_R) = \GKdim_{K}(N_{R_K})$ by 
Lemma~\ref{lem:GKbasics}(2).  Moreover, as a right $R$-module, $M_K = M \otimes_k K \cong \bigoplus_{i=1}^{[K:k]} M$.  It is 
easy to see that direct sums of $d$-homogeneous modules are again $d$-homogeneous, so 
$M_K$ is a $d$-homogeneous right $R$-module.  Then $d = \GKdim_k(N_R) = \GKdim_{K}(N_{R_K})$, and 
so $M_K$ is a $d$-homogeneous right $R_K$-module, as required.

Now consider the case $k \subseteq K = k(x)$.  Again, suppose that $M_K$ is not $d$-homogeneous.  As above, we can then 
choose $0 \neq z = \sum_{i=0}^m n_i \otimes f_i(x) \in M \otimes_k K$ such that $\GKdim_K( (z R_K)_{R_K}) < d$.
If $g(x) \in k[x]$ is a common denominator for the rational functions $f_i(x)$, replacing $z$ with $z (1 \otimes g(x))$, we can assume that $z \in M \otimes_k k[x]$, without changing $z R_K$.  Now we can write $z = \sum_{i=0}^p m_i \otimes x^i \subseteq M \otimes_k k[x]$, with 
$m_p \neq 0$.  Assume we have chosen such a nonzero $z$ with a minimal number of nonzero $m_i$.  For each nonzero $m_i$ consider $J_i = \operatorname{r.ann}_R(m_i)$, so that $m_i R \cong R/J_i$ as right $R$-modules.  If $J_i \neq J_j$ for some $i, j$, say with $J_i \nsubseteq J_j$, then we can choose $r \in R$ with $r \in J_i \setminus J_j$.  
Then $z' = z (r \otimes 1) \in z R_K$, so that $\GKdim_K( (z' R_K)_{R_K}) < d$, while $0 \neq z'$ has fewer nonzero summands, a contradiction.
Thus all of the $J_i$ are equal to a single right ideal $J$.
 
Certainly $J \otimes_k K \subseteq \operatorname{r.ann}_{R_K} (z)$, and we claim that this is an equality.  
Suppose not 
and choose $w \in \operatorname{r.ann}_{R_K} (z) \setminus J \otimes_k K$.  By clearing denominators we may assume that 
$w \in R \otimes_k k[x]$.   Write $w = \sum_{i=0}^q r_i \otimes x^i$ where $r_q \neq 0$.  We may choose such a $w$ 
with $q$ as small as possible.  If $m_p r_q \neq 0$,  the term $m_p r_q \otimes x^{p +q}$ in $zw$ is nonzero and cannot be cancelled by any other term, contradicting $zw = 0$.  This forces 
$m_p r_q = 0$, and thus $r_q \in J$.  Now subtracting $r_q \otimes x^q$ from $w$ we get a new $w$ with smaller $q$, 
a contradiction.  Thus $J \otimes_k K = \operatorname{r.ann}_{R_K} (z)$ as claimed.   But this means that 
$z R_K \cong (R \otimes_k K)/(J \otimes_k K) \cong (R/J) \otimes_k K$.  
Thus 
\[
\GKdim_{K}((zR_K)_{R_K}) = \GKdim_{K}((R/J) \otimes_k K) = \GKdim_k((R/J)_R) = d, 
\]
using Lemma~\ref{lem:GKbasics}(1), and since $R/J$ is isomorphic to a nonzero submodule $m_p R$ of the $d$-homogeneous $R$-module $M$.  This contradicts our choice of $z$.  Thus $M_K$ is $d$-homogeneous in this case as well.  
\end{proof}

As a useful consequence of the stability of the homogeneous property under base field extensions, we see that the length 
of a critical composition series for a module can only grow after base field extension.
\begin{lemma}
\label{lem:cs-ext}
Let $R$ be a $k$-algebra and let $k \subseteq K$ be a field extension.  Assume that GK dimension of $R$-modules and $R_K$-modules is exact.  Suppose that $M$ is a right $R$-module with a critical composition series $M_0 = 0 \subseteq M_1 \subseteq M_2 \subseteq \dots \subseteq M_n = M$ (for GK dimension over $k$).  Suppose that for all $1 \leq i \leq n$, the $R_K$-module $(M_i/M_{i-1}) \otimes_k K$ has a critical composition series (for GK dimension over $K$).  Then $M_K = M \otimes_k K$ has a critical composition series which is a refinement of the series $0 \subseteq M_1 \otimes_k K \subseteq M_2 \otimes_k K \subseteq \dots \subseteq M_n \otimes_k K = M_K$.  In particular, the length of a critical composition series of $M_K$ is greater than or equal to the length of the critical composition series for $M$.
\end{lemma}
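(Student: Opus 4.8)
The plan is to refine the filtration $0 \subseteq M_1 \otimes_k K \subseteq \cdots \subseteq M_n \otimes_k K = M_K$ block by block and to check that the resulting filtration is again a critical composition series, this time over $K$. First I would note that since $-\otimes_k K$ is flat, applying it to the short exact sequences $0 \to M_{i-1} \to M_i \to M_i/M_{i-1} \to 0$ shows that the $i$-th factor of the displayed filtration is canonically $(M_i/M_{i-1}) \otimes_k K$. Each $M_i/M_{i-1}$ is $d_i$-critical over $k$, hence $d_i$-homogeneous over $k$, so Proposition~\ref{prop:homogeneous} gives that $(M_i/M_{i-1}) \otimes_k K$ is $d_i$-homogeneous over $K$; in particular it is nonzero, and by Lemma~\ref{lem:GKbasics}(1) it has $\GKdim_K$ equal to $d_i$.

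Next I would record the elementary observation that a critical composition series of a $d$-homogeneous module $N$ (over any algebra with exact GK dimension) has all of its factors $d$-critical: if $0 = N_0 \subseteq \cdots \subseteq N_m = N$ has $N_j/N_{j-1}$ being $e_j$-critical with $e_1 \leq \cdots \leq e_m$, then $e_1 = \GKdim(N_1) = d$ because $N_1$ is a nonzero submodule of $N$, while for every $j$ one has $e_j = \GKdim(N_j/N_{j-1}) \leq \GKdim(N) = d$, forcing $e_j = d$.

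Now I would construct the refinement: for each $i$, pull back the critical composition series of $(M_i/M_{i-1}) \otimes_k K$ guaranteed by hypothesis to a chain of $R_K$-submodules strictly between $M_{i-1} \otimes_k K$ and $M_i \otimes_k K$, and concatenate these chains over $i = 1, \dots, n$. By the previous two paragraphs every factor produced inside block $i$ is $d_i$-critical, and since $d_1 \leq d_2 \leq \cdots \leq d_n$ the GK dimensions of the factors of the concatenated filtration are non-decreasing; hence it is a critical composition series of $M_K$ refining the given one. Its length equals $\sum_{i=1}^{n} \ell\big((M_i/M_{i-1}) \otimes_k K\big) \geq n$, since each block contributes at least one factor. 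Finally, exactness of GK dimension for $R_K$-modules gives, via the uniqueness-of-length statement cited from \cite[Theorem 15.9]{GW}, that all critical composition series of $M_K$ have the same length, so $\ell(M_K) \geq n = \ell(M)$.

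The argument is essentially bookkeeping, and the only substantive ingredient is Proposition~\ref{prop:homogeneous}: it is precisely the preservation of $d_i$-homogeneity that pins the factors of block $i$ at GK dimension $d_i$, so that the non-decreasing-dimension condition in the definition of a critical composition series survives concatenating the blocks. I expect the only real fussiness to be in stating the pullback of the submodule chains and the identification of the block factors cleanly; no genuine obstacle arises.
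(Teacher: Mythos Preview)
Your proposal is correct and follows essentially the same approach as the paper: use Proposition~\ref{prop:homogeneous} to see that each $(M_i/M_{i-1})\otimes_k K$ is $d_i$-homogeneous, deduce that all factors of its critical composition series are $d_i$-critical, and stitch the resulting series together into a refinement with non-decreasing factor dimensions. Your write-up is in fact a bit more explicit than the paper's (you spell out the flatness identification of the block factors and the argument that a critical composition series of a $d$-homogeneous module has all factors $d$-critical), but the argument is the same.
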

\begin{proof}
By definition, each $M_i/M_{i-1}$ is $d_i$-critical, where $d_i \leq d_{i+1}$ for all $i$.   In particular, 
each factor $M_i/M_{i-1}$ is $d_i$-homogeneous, so by Proposition~\ref{prop:homogeneous}, $(M_i/M_{i-1}) \otimes_k K$ 
is still $d_i$-homogeneous for GK dimension over $K$.   Choosing a critical composition series for each $(M_i/M_{i-1}) \otimes_k K$ (which is assumed to exist), it is easy to see that all of its factors must have dimension $d_i$ as well.   
Thus stitching these critical composition series together yields a critical composition series for $M \otimes_k K$, which is a refinement of the series with terms $M_i \otimes_k K$ by construction.
\end{proof}

While we have noted that criticality is not stable under base field extension in general, by using a similar idea as in the last part of the proof of Proposition~\ref{prop:homogeneous}, we can prove that criticality is preserved under a transcendental base field extension.
\begin{proposition}
\label{prop:critical}
Let $R$ be a $k$-algebra and $M$ a right $R$-module.  Let $k \subseteq K = k(x)$ where $k(x)$ 
is a function field in one variable.   Assume that GK dimension is exact for right $R$ and $R_K$-modules, and finitely partitive for $R_K$-modules.  If $M$ is a critical $R$-module for GK dimension over $k$, then $M$ is a critical $R_K$-module for GK dimension over $K$.
\end{proposition}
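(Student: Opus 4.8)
The plan is to mimic the last part of the proof of Proposition~\ref{prop:homogeneous} to reduce to a completely explicit cyclic situation, and then to control the relevant quotient by passing to a polynomial subring and taking leading coefficients.

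First the reductions. Write $d=\GKdim_k(M)$; by Lemma~\ref{lem:GKbasics} we have $\GKdim_K(M_K)=d$, and since a critical module is homogeneous, Proposition~\ref{prop:homogeneous} already shows $M_K$ is $d$-homogeneous over $R_K$. So it remains only to show $\GKdim_K(M_K/N)<d$ for every nonzero $R_K$-submodule $N\subseteq M_K$. Pick $0\neq z\in N$. Then $z$ lies in $M'\otimes_k K$, where $M'$ is the $R$-submodule of $M$ generated by the finitely many coefficients of $z$, and $M'$ is again $d$-critical (a nonzero submodule of a $d$-critical module is $d$-critical); clearing denominators we may assume $z\in M'\otimes_k k[x]$. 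Since $M_K/(M'\otimes_k K)\cong(M/M')\otimes_k K$ has GK dimension $<d$ over $K$ by criticality of $M$ and Lemma~\ref{lem:GKbasics}, exactness of GK dimension shows it suffices to bound $\GKdim_K(M'_K/zR_K)$; and as $M_K/N$ is a quotient of $M_K/zR_K$, we are reduced to this: $M$ is a finitely generated $d$-critical $R$-module and $z=\sum_{i=0}^{p}m_i\otimes x^i\in M\otimes_k k[x]$ with $m_p\neq 0$, and we must show $\GKdim_K(M_K/zR_K)<d$.

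Now argue as in the last part of the proof of Proposition~\ref{prop:homogeneous}. Suppose for contradiction that some $z$ as above has $\GKdim_K(M_K/zR_K)=d$, and choose one with the fewest nonzero coefficients $m_i$. The cancellation trick (if $\operatorname{r.ann}_R(m_i)\nsubseteq\operatorname{r.ann}_R(m_j)$, replace $z$ by $z(r\otimes 1)$ for $r\in\operatorname{r.ann}_R(m_i)\setminus\operatorname{r.ann}_R(m_j)$) forces all the right annihilators to equal a single right ideal $J$; then $\operatorname{r.ann}_{R_K}(z)=J\otimes_k K$, so $zR_K\cong(R/J)\otimes_k K$, and $R/J\cong m_pR$ is a nonzero, hence $d$-critical, submodule of $M$. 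The point to exploit is that $M/m_pR$ then has GK dimension $<d$ over $k$. To do so, filter $R[x]:=R\otimes_k k[x]$ (of GK dimension $d+1$ over $k$) and $M\otimes_k k[x]$ by $x$-degree. Because every $m_i$ has right annihilator $J$, the leading term of $z(r\otimes x^j)$ is $m_pr\otimes x^{p+j}$ whenever $r\notin J$, while $z(r\otimes x^j)=0$ when $r\in J$; hence the leading-term submodule $\operatorname{in}(zR[x])$ contains $(m_pR)\otimes_k k[x]$. Therefore $(M\otimes_k k[x])/\operatorname{in}(zR[x])$ is a quotient of $(M/m_pR)\otimes_k k[x]$, which has GK dimension $\GKdim_k(M/m_pR)+1<d+1$ over $k$; by the standard bound $\GKdim\le\GKdim\operatorname{gr}$ for a good filtration, also $\GKdim_k\bigl((M\otimes_k k[x])/zR[x]\bigr)<d+1$. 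Finally $M_K/zR_K$ is the central localization of $(M\otimes_k k[x])/zR[x]$ at $k[x]\setminus\{0\}$, so (as in Lemma~\ref{lem:loc-crit}(1)) its GK dimension over $R_K$ regarded as a $k$-algebra is still $<d+1$; and since for a finitely generated $R_K$-module this GK dimension over $k$ exceeds the GK dimension over $K$ by $\trdeg_k(K)=1$, we conclude $\GKdim_K(M_K/zR_K)<d$, contradicting the choice of $z$.

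The routine part is the reduction and the cyclic normal form, which simply reprise Proposition~\ref{prop:homogeneous}; the substance is in the last paragraph, where the three facts to nail down are the identification $\operatorname{in}(zR[x])\supseteq(m_pR)\otimes_k k[x]$, the good-filtration inequality $\GKdim\le\GKdim\operatorname{gr}$, and the comparison of GK dimension over $k$ with GK dimension over $K$ for finitely generated $R_K$-modules; I expect that last bookkeeping point to be the only genuinely delicate step. (An alternative to the polynomial-ring argument is to induct on the number of generators of $M$: with $Y=zR_K\cap(m_pR\otimes_k K)$ one checks $\GKdim_K(M_K/zR_K)\le\max\bigl(\GKdim_K\bigl((m_pR\otimes_k K)/Y\bigr),\,d-1\bigr)$ and that $Y$ has full GK dimension $d$ in $m_pR\otimes_k K$, so the induction hypothesis applied to the cyclic $d$-critical module $m_pR\cong R/J$ finishes it; but this shifts the difficulty onto a direct treatment of the cyclic case, which seems to be the real crux on either route.)
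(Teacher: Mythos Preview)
Your reduction to the cyclic normal form (all nonzero $m_i$ sharing the same right annihilator $J$, and $zR_K\cong(R/J)_K$) is fine, but the filtration argument that follows has two genuine problems.

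First, the containment $\operatorname{in}(zR[x])\supseteq (m_pR)\otimes_k k[x]$ is false. The leading term of any nonzero element of $zR[x]$ lies in degree $\geq p$, so in fact $\operatorname{in}(zR[x])=(m_pR)\otimes_k x^pk[x]$; hence $(M\otimes_k k[x])/\operatorname{in}(zR[x])$ is \emph{not} a quotient of $(M/m_pR)\otimes_k k[x]$, but an extension of it by $(m_pR)\otimes_k(k[x]/x^pk[x])$, which has GK dimension $d$ rather than $<d$. More seriously, the ``standard bound $\GKdim\leq\GKdim\operatorname{gr}$ for a good filtration'' does not apply here: that inequality requires the filtration pieces to be finite-dimensional over $k$, which fails for the $x$-degree filtration on $R[x]$ and $M[x]$. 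If you try to compare directly, set $P_n=\sum_{j\leq n}W_0V_0^{n-j}\otimes x^j$; then taking leading terms gives an injection of $\operatorname{gr}(P_n\cap zR[x])$ into $P_n\cap\operatorname{in}(zR[x])$, so $\dim_k(P_n\cap zR[x])\leq\dim_k(P_n\cap\operatorname{in}(zR[x]))$, which yields $\dim_k(WV^n)_N\geq\dim_k(WV^n)_{\operatorname{gr}N}$. That is, the inequality you need goes the \emph{wrong way}. I do not see how to salvage the upper bound on $\GKdim_k\bigl((M\otimes_k k[x])/zR[x]\bigr)$ along these lines without substantial new input.

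The paper's argument avoids all of this by exploiting the finitely partitive hypothesis directly. It argues by contradiction: if $M_K$ is not $d$-critical, one first checks (using exactness) that $M'_K$ is not $d$-critical for every nonzero $R$-submodule $M'\subseteq M$. Then the cyclic-submodule trick from Proposition~\ref{prop:homogeneous} shows every nonzero $R_K$-submodule $P\subseteq M_K$ contains a copy of some $M'_K$, hence $P$ is not $d$-critical either. Since $M_K$ is $d$-homogeneous, this lets one build an infinite strictly descending chain $M_K\supsetneq N_1\supsetneq N_2\supsetneq\cdots$ with each $\GKdim_K(N_i/N_{i+1})=d$, contradicting finite partitivity. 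No dimension-shifting between $k$ and $K$, and no filtration argument, is needed.
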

\begin{proof}
Let $M$ be $d$-critical over $R$, so it is also $d$-homogeneous.  Suppose that $M_K$ fails to be $d$-critical, 
and let $N$ be a nonzero $R_K$-submodule of $M_K$ such that $\GKdim_{K}(M_K/N) = \GKdim_{K}(M_K)$ ( $=\GKdim_k M = d$).

We first claim  for any $R$-submodule $0 \neq M' \subseteq M$ that $M'_K$ also fails to be $d$-critical.  
We have $\GKdim_k((M/M')_R) < \GKdim_k(M_R)$ since $M$ is critical, and so using Lemma~\ref{lem:GKbasics}(1) we have 
\begin{gather*}
\GKdim_{K}((M_K/M'_K)_{R_K}) = \GKdim_K (((M/M') \otimes_k K)_{R_K})  = \GKdim_k ((M/M')_R)    \\
< \GKdim_k(M_R) = \GKdim_{K}((M_K)_{R_K}). 
\end{gather*}
Since $\GKdim_K(M_K/N) = d$ we must have $\GKdim_K(M_K/(N \cap M'_K)) = d$.  Then from the chain $N \cap M'_K \subseteq M'_K \subseteq M_K$, by exactness of GK dimension for $R_K$ it follows that $\GKdim_K(M'_K/(N \cap M'_K)) = d$.
Note that $N/(N \cap M'_K)$ is isomorphic to a submodule of $M_K/M'_K$, which 
has GK dimension less than that of $M_K$, but $N$ has the same GK dimension as $M_K$ since $M_K$ is homogeneous 
by Proposition~\ref{prop:homogeneous}.  This implies that $N \cap M'_K \neq 0$.  
Thus the submodule $N \cap M'_K$ of $M'_K$ demonstrates that $M'_K$ also fails to be critical, as claimed.

Next, we claim that any nonzero submodule $0 \neq P \subsetneq M_K = M \otimes_k k(x)$ must also fail to be $d$-critical.
By picking any nonzero $z = \sum_{i=0}^p m_i \otimes x^i \in P$ with a minimal number of nonzero coefficients and following the same argument as in the last part of Proposition~\ref{prop:homogeneous}, we see that $P$ has a nonzero cyclic submodule $P' = z R_K \cong (R/J) \otimes_k K$, for some right ideal $J$ of $R$ such that $R/J$ is isomorphic to a nonzero $R$-submodule $M'$ of $M$.  
In particular, by the previous paragraph $M'_K \cong P'$ is not $d$-critical, so $P$ is not $d$-critical either, proving the claim.

Now since every nonzero submodule of $M_K$ has GK dimension $d$ but fails to be $d$-critical, by induction we can choose an infinite 
sequence of nonzero submodules $M_K = N_0 \supsetneq N_1 \supsetneq N_2 \supsetneq \dots$ 
such that $\GKdim_K(N_i/N_{i+1}) = d$ for all $i \geq 0$.  This contradicts the assumption that GK dimension of $R(x)$-modules is finitely partitive (in fact we don't even need the full strength of partitivity, just the lack of infinite descending chains with factors all of the same dimension).    Thus $M_K$ must be $d$-critical.
\end{proof}

 \section{Stably noetherian algebras}
\label{sec:def}

A s mentioned in the introduction, in some cases one is content to extend a base field by some more 
special kind of extension.  For example, if one only wants to ensure the base field has a large enough cardinality for some purpose, it would be enough to take a purely transcendental extension, which tends to have a milder effect on the properties of an algebra than an algebraic extension of the base field.  To account for this situation, we will study a more general version of the stably noetherian property than that defined in Definition~\ref{def:sn1}.

Let $\mc{F}$ stand for some class of field extensions $F \subseteq K$.   One choice of course is the class of all field extensions, 
or for another example, $\mc{F}$ could be the class of purely transcendental extensions, namely those of the form  $F \subseteq K$ where $K \cong F(x_{\alpha} | \alpha \in I)$ for some set of indeterminates $\{ x_{\alpha} \}$ indexed by $I$.
\begin{definition}
Let $R$ be a right noetherian $k$-algebra.  Given a class $\mc{F}$ of field extensions, we say that 
$R$ is \emph{$\mc{F}$-stably right noetherian} if $R \otimes_k K$ is right noetherian for all extensions $k \subseteq K$ in the class $\mc{F}$.  When $\mc{F}$ is the class of all field extensions we simply say that $R$ is \emph{stably right noetherian}.  We write $\mc{P}\mc{T}$ for the class of purely transcendental extensions, so the algebras for which the noetherian property is stable under extensions in this class are called \emph{$\mc{P}\mc{T}$-stably right noetherian}.
\end{definition}
\noindent

The notion of $\mc{F}$-stably noetherian works best when the class of extensions $\mc{F}$ has some mild closure properties.
For the purposes of this paper only, we make the following definition which simply includes all of the properties we need to 
make our later results go through.  
\begin{definition}
Let $\mc{F}$ be a class of field extensions.  We say that $\mc{F}$ is \emph{extensive} if it satisfies the following 
properties:  (i) given $k \subseteq L$ and $L \subseteq K$ in the class, then $k \subseteq K$ is in the class; 
(ii) given $k \subseteq L$ and $k \subseteq L'$ in the class, there is an extension $k \subseteq E$ in the class with 
subfields $k \subseteq F \subseteq E$, $k \subseteq F' \subseteq E$, and $k$-isomorphisms $L \cong F$ and $L' \cong F'$; and 
(iii)  If $k \subseteq K$ is in the class and $k \subseteq L \subseteq K$ and $k \subseteq L' \subseteq K$ are subextensions
with $k \subseteq L$ and $k \subseteq L'$ finitely generated, then there is a subextension 
$k \subseteq L L' \subseteq M \subseteq K$ with $k \subseteq M$ again finitely generated, and with 
$k \subseteq M$ and $M \subseteq K$ in the class.
\end{definition}
\noindent
Roughly speaking, the first property shows that being in the class $\mc{F}$ is ``closed under extensions", and the 
second property says that any two extensions of a field which are in the class can be both embedded inside a single extension in the class.  The third more technical property will be used in the proof of the main inductive method in the next section.

Suppose that $k \subseteq L$ and $k \subseteq L'$ are any two field extensions.  Recall that we can always embed them up to isomorphism in one extension $E$.  Namely, if $P$ is any maximal ideal of $L \otimes_k L'$, then $E = L \otimes_k L'/P$ is a field and there are 
canoncial (necessarily injective) $k$-algebra homomorphisms from $\{ (x \otimes 1) | x \in L\} \cong L$ and $\{ (1 \otimes y) | y \in L' \} \cong L'$ to $E$.  In fact, the composite of the images of these homomorphisms is all of $E$.  Thus any class of extensions which is closed under 
composites satisfies property (ii), and if it is also closed under subextensions (i.e. if given $k \subseteq L \subseteq K$ and $k \subseteq K$ 
in the class, then both $k \subseteq L$ and $L \subseteq K$ are in the class), then it also satisfies (iii) by taking $M = L L'$.   In this way it follows from standard theorems of field theory that familiar classes $\mc{F}$ such as algebraic extensions, separable algebraic extensions, or purely inseparable algebraic extensions are extensive.   

The proof that the class $\mc{F} = \mc{P} \mc{T}$ of purely transcendental extensions is extensive is a bit different.  Property (i) is a well-known result, and for (ii), simply choose a purely transcendental extension in a number of variables exceeding the cardinalities of the transcendence bases of the two given extensions.  For (iii), choose some  transcendence basis $\{ x_{\alpha} \}$ so that $K = k(x_{\alpha} | \alpha \in I)$.  Then given $L = k(a_1, \dots, a_n) \subseteq K$ and $L' = k(b_1, \dots, b_m) \subseteq K$, note that we may choose a finite subset $J$ of $I$ so that all $a_i$ and $b_i$ are contained in $M = k(x_{\beta} | \beta \in J)$.

We study next how the $\mc{F}$-stably noetherian property behaves under some common constructions.
\begin{lemma}
\label{lem:stableprops}
Let $R$ be a right noetherian $k$-algebra, and let $\mc{F}$ be an extensive class of field extensions.
\begin{enumerate}
\item If $R$ is $\mc{F}$-stably right noetherian over $k$, then so is every factor ring of $R$, every localization of $R$ at a right denominator set, any Ore extension $R[x; \sigma, \delta]$ with automorphism $\sigma$ and $\sigma$-derivation $\delta$ (over $k$), and any ring extension $S$ where $R \subseteq S$ and $S$ is finitely generated as a right $R$-module.
\item If $R \subseteq S$ is a extension of $k$-algebras such that $S$ is faithfully flat as a left $R$-module (in particular, if $S$ is a free left $R$-module), then if $S$ is $\mc{F}$-stably right noetherian over $k$, so is $R$.
\item If $k \subseteq \ell$ is a field extension in the class $\mc{F}$ and $R$ is an $\ell$-algebra, then if $R$ is $\mc{F}$-stably right noetherian over $k$ it is also $\mc{F}$-stably right  noetherian over $\ell$.
\item If $k \subseteq \ell$ is an extension in the class $\mc{F}$, then $R$ is $\mc{F}$-stably right noetherian over $k$ if and only if $R \otimes_k \ell$ is $\mc{F}$-stably right noetherian over $\ell$.
\end{enumerate}
\end{lemma}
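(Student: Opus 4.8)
The plan is to deduce all four parts from two facts. First, for every field extension $k \subseteq K$ the functor $- \otimes_k K$ is exact and commutes, up to natural isomorphism, with each ring-theoretic operation named in the statement; so once $A := R \otimes_k K$ is known to be right noetherian, the classical implication ``$A$ right noetherian $\Rightarrow$ (operation applied to $A$) right noetherian'' transfers for free. Second, conditions (i) and (ii) in the definition of an extensive class are exactly what is needed to convert a field extension in $\mc{F}$ over $\ell$ into one over $k$, and conversely. Concretely, for part (1) fix $k \subseteq K$ in $\mc{F}$, so $R \otimes_k K$ is right noetherian. A factor ring $R/I$ satisfies $(R/I) \otimes_k K \cong (R \otimes_k K)/(I \otimes_k K)$ by right exactness, a factor of a right noetherian ring. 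If $X$ is a right denominator set in $R$, its image in $R \otimes_k K$ is again a right denominator set and $(RX^{-1}) \otimes_k K \cong (R \otimes_k K) X^{-1}$, which is right noetherian since a localization of a right noetherian ring at a right denominator set is right noetherian. For an Ore extension, $R[x;\sigma,\delta] \otimes_k K \cong (R \otimes_k K)[x; \sigma \otimes \id_K, \delta \otimes \id_K]$ with $\sigma \otimes \id_K$ an automorphism, so it is right noetherian by the Hilbert basis theorem for Ore extensions. Finally, if $R \subseteq S$ with $S = \sum_{i=1}^n s_i R$, then $S \otimes_k K = \sum_{i=1}^n (s_i \otimes 1)(R \otimes_k K)$ is a finitely generated, hence noetherian, right module over $R \otimes_k K$, so its right ideals satisfy the ascending chain condition; the only work is checking the displayed isomorphisms.

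Part (2) is faithfully flat descent of the noetherian property. For $k \subseteq K$ in $\mc{F}$ we get a ring extension $A := R \otimes_k K \subseteq S \otimes_k K =: B$ with $B$ faithfully flat as a left $A$-module, since faithful flatness of $S$ over $R$ is preserved by $- \otimes_k K$ (and a nonzero free module is faithfully flat, covering the parenthetical case). Assuming $B$ right noetherian, for an ascending chain of right ideals $I_1 \subseteq I_2 \subseteq \cdots$ of $A$, flatness identifies $I_n \otimes_A B$ with $I_n B \subseteq B$; this chain stabilizes, so $(I_{n+1}/I_n) \otimes_A B = 0$ for $n \gg 0$, hence $I_{n+1} = I_n$ by faithful flatness. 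For part (3), let $\ell \subseteq L$ be in $\mc{F}$; by (i), $k \subseteq L$ is in $\mc{F}$, so $R \otimes_k L$ is right noetherian, and since $R$ is an $\ell$-algebra the multiplication map $\ell \otimes_k L \to L$ induces a surjection $R \otimes_k L \cong R \otimes_\ell (\ell \otimes_k L) \twoheadrightarrow R \otimes_\ell L$, exhibiting $R \otimes_\ell L$ as a factor of a right noetherian ring.

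For part (4), the direction $(\Rightarrow)$ is the previous argument with no quotient: for $\ell \subseteq L$ in $\mc{F}$, condition (i) gives $k \subseteq L$ in $\mc{F}$, and $(R \otimes_k \ell) \otimes_\ell L \cong R \otimes_k L$ is right noetherian. For $(\Leftarrow)$, fix $k \subseteq K$ in $\mc{F}$. Applying condition (ii) to $k \subseteq \ell$ and $k \subseteq K$ produces a field $E$ with $k \subseteq E$ in $\mc{F}$ and $k$-embeddings of both $\ell$ and $K$, which can be arranged so that $\ell \subseteq E$ and $K \subseteq E$ also lie in $\mc{F}$. Then $R \otimes_k E \cong (R \otimes_k \ell) \otimes_\ell E$ is right noetherian because $R \otimes_k \ell$ is $\mc{F}$-stably right noetherian over $\ell$, and $R \otimes_k K \hookrightarrow R \otimes_k E$ is faithfully flat as a left module since $K \subseteq E$ is a field extension; so $R \otimes_k K$ is right noetherian by part (2).

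The main obstacle is $(\Leftarrow)$ of part (4): this is the single place where one must manufacture, from the abstract axioms, one extension of $k$ in $\mc{F}$ containing copies of both $\ell$ and $K$ with the $\ell$-subextension still in $\mc{F}$ --- precisely the purpose of condition (ii). For the standard classes (all, algebraic, or purely transcendental extensions) one builds $E$ by the usual composite or transcendence-basis construction, which makes the relevant subextensions of $E$ lie in the class. Every other step is ``base change is exact and commutes with the operation'' plus a classical preservation theorem, the only slightly delicate points being the module-finiteness argument of part (1) and keeping the left/right sides straight in the descent of part (2).
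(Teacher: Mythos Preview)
Your proof is correct and follows the same route as the paper's: parts (1) and (2) are exactly the base-change-commutes-with-the-operation plus classical-preservation arguments the paper defers to Farina's thesis, and parts (3) and (4) use properties (i) and (ii) of an extensive class together with the faithfully-flat descent from part (2) in the same way. The only cosmetic difference is that you spell out the details the paper leaves as references; note that in (4)($\Leftarrow$) you (like the paper) assert $\ell \subseteq E$ lies in $\mc{F}$, which is what is actually needed, while the additional claim that $K \subseteq E$ lies in $\mc{F}$ is harmless but unused, since freeness of $R \otimes_k E$ over $R \otimes_k K$ only requires $K \subseteq E$ to be a field extension.
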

\begin{proof}
(1)  The proofs are the same as for the usual stably noetherian property; see \cite[Propositions 2.16, 2.22, 2.26, 2.28]{Fa}.

(2)  This follows from the fact that if $S$ is a faithfully flat left $R$-module then the right noetherian 
property for $S$ passes down to $R$, together with the fact that faithful flatness is preserved by 
base field extension.  See \cite[Lemma 2.42, Proposition 2.43]{Fa}.

(3)  If $R$ is $\mc{F}$-stably right noetherian over $k$, then for any extension $\ell \subseteq K$ in the class, $k \subseteq K$ is also in the class
by property (i) of the definition of extensive, so $R \otimes_k K$ is right noetherian.  This ring surjects onto $R \otimes_{\ell} K$, so the latter ring is also right noetherian.  Thus $R$ is $\mc{F}$-stably right noetherian over $\ell$.

(4) Suppose that $R$ is $\mc{F}$-stably right noetherian over $k$.  Then for any extension $\ell \subseteq K$ in the class, 
$(R \otimes_k \ell) \otimes_{\ell} K \cong R \otimes_k K$ is right noetherian, since $k \subseteq K$ is also in the class. 
 So $R \otimes_k \ell$ is $\mc{F}$-stably right noetherian over $\ell$.

Conversely, if $R \otimes_k \ell$ is $\mc{F}$-stably right noetherian over $\ell$, then consider any extension 
$k \subseteq K$ in the class $\mc{F}$.  By property (ii) of the definition of extensive, there is a field $E$ with extensions $\ell \subseteq E$ 
and $K \subseteq E$ in the class (possibly replacing $\ell$ and $K$ with isomorphic copies).  By assumption, $(R \otimes_k \ell) \otimes_{\ell} E \cong R \otimes_k E$ is noetherian.  Then $R \otimes_k K \subseteq R \otimes_k E$ is a free and hence faithfully flat left-module extension, so $R \otimes_k K$ is also right noetherian, by the same argument as in part (2).  Thus $R$ is $\mc{F}$-stably right noetherian over $k$.
\end{proof}

By a standard reduction, one can reduce to the case of prime rings in studying the stably noetherian property, as follows.
\begin{lemma}
\label{lem:prime-reduce}
If $R$ is a right noetherian $k$-algebra, then $R$ is $\mc{F}$-stably right noetherian if and only if $R/P_i$ is $\mc{F}$-stably right noetherian for all minimal primes $P_i$.
\end{lemma}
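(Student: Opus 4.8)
The plan is to handle the two implications separately, the forward one being formal and the reverse one being the usual ``reduce to prime factors through the nilpotent radical'' argument. For the forward implication there is nothing to do: if $R$ is $\mc{F}$-stably right noetherian, then each $R/P_i$ is a factor ring of $R$, hence is $\mc{F}$-stably right noetherian by Lemma~\ref{lem:stableprops}(1). For the converse, suppose each $R/P_i$ is $\mc{F}$-stably right noetherian, fix an extension $k \subseteq K$ in $\mc{F}$, and set $A := R \otimes_k K$; the goal is to show $A$ is right noetherian. Let $N$ be the prime radical of $R$. Since $R$ is right noetherian, $N$ is nilpotent, say $N^t = 0$, and $N = P_1 \cap \dots \cap P_m$ for the finitely many minimal primes $P_1, \dots, P_m$ of $R$.

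First I would show that $A/(N \otimes_k K) \cong (R/N) \otimes_k K$ is a right noetherian ring. The natural ring map $R/N \to \bigoplus_{i=1}^m R/P_i$ is injective, hence so is the induced map $(R/N) \otimes_k K \to \bigoplus_{i=1}^m (R/P_i) \otimes_k K$ of right $(R/N)\otimes_k K$-modules, using that $K$ is flat over $k$. By hypothesis each $(R/P_i)\otimes_k K$ is a right noetherian ring, and since the $(R/N)\otimes_k K$-module structure on $(R/P_i)\otimes_k K$ factors through the surjection $(R/N)\otimes_k K \to (R/P_i)\otimes_k K$, its $(R/N)\otimes_k K$-submodules coincide with its right ideals; thus $(R/P_i)\otimes_k K$ is a noetherian right $(R/N)\otimes_k K$-module. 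A finite direct sum of noetherian modules is noetherian and a submodule of a noetherian module is noetherian, so $(R/N)\otimes_k K$ is a noetherian right $(R/N)\otimes_k K$-module, i.e.\ a right noetherian ring.

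Next I would climb the filtration $0 = N^t \otimes_k K \subseteq N^{t-1}\otimes_k K \subseteq \dots \subseteq N \otimes_k K \subseteq A$ by ideals of $A$. For each $j$, the factor $(N^j \otimes_k K)/(N^{j+1}\otimes_k K) \cong (N^j/N^{j+1})\otimes_k K$ is annihilated by $N \otimes_k K$, hence is a right module over $(R/N)\otimes_k K$; and since $R$ is right noetherian, $N^j/N^{j+1}$ is a finitely generated right $R/N$-module, so $(N^j/N^{j+1})\otimes_k K$ is a finitely generated, and therefore (by the previous step) noetherian, right $(R/N)\otimes_k K$-module. It is thus a noetherian right $A$-module, as is the top factor $A/(N\otimes_k K) \cong (R/N)\otimes_k K$. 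Since $A$ is obtained from finitely many noetherian right $A$-modules by successive extensions, $A$ is a noetherian right $A$-module, i.e.\ $R \otimes_k K$ is right noetherian; as $K$ was arbitrary in $\mc{F}$, this finishes the proof.

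I do not expect a genuine obstacle. The only points needing a moment's care are the exactness of $- \otimes_k K$ (so that the maps of tensored modules stay injective and the filtration quotients are identified correctly) and the bookkeeping that identifies $(R/N)\otimes_k K$-submodules of $(R/P_i)\otimes_k K$ with right ideals of that ring.
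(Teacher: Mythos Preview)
Your proof is correct and follows essentially the same route as the paper: the forward direction via Lemma~\ref{lem:stableprops}(1), and the converse by first using the subdirect embedding $R/N \hookrightarrow \bigoplus R/P_i$ to get that $(R/N)\otimes_k K$ is right noetherian, then climbing the nilpotent filtration by powers of $N$. The paper merely cites \cite{Fa} for the details, but its sketch (and the commented-out argument in the source) matches yours.
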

\begin{proof}
This is an easy consequence of the fact that the prime radical $N$ of $R$ is nilpotent together with the fact that $R/N$ embeds subdirectly in 
$\bigoplus_{i=1}^n R/P_i$.  The proof is given in \cite[Proposition 2.21, Proposition 2.34, Remark 2.35]{Fa}.
\end{proof}

\section{Inductive method}
\label{sec:ind}

In this section we prove our main technical result, which is a method for proving that certain 
rings are $\mc{F}$-stably noetherian by induction on the GK dimension of a module, provided one 
can verify that there is a bound to how much base field extension in the class increases the lengths of critical composition series.   Recall that for a right module $M$ over the $k$-algebra $R$ and a field extension $k \subseteq K$, we will write $M_K = M \otimes_k K$, which is a right $R_K = R \otimes_k K$-module.
\begin{theorem}
\label{thm:reduce}
Let $k$ be a field and let $R$ be a right noetherian $k$-algebra with $\GKdim_k R < \infty$.   Let $\mc{F}$ be some extensive class of field extensions.  Fix a field extension $k \subseteq K$ in the class, and call a field $L$ \emph{small} if $k \subseteq L \subseteq K$, with $k \subseteq L$ and $L \subseteq K$ in the class $\mc{F}$, and with $k \subseteq L$ a finitely generated field extension.  Suppose that 
\begin{enumerate}
\item[(i)] For all small fields $L$, GK dimension of  finitely generated $R_L$-modules is exact, 
finitely partitive, and integer valued.  
\item[(ii)]  Given any small field $L$ and a finitely generated right $R_L$-module, there is a upper bound $n = n(M)$ on the lengths of the 
$\GKdim_{L'}$-critical composition series for the modules $M_{L'} = M \otimes_L L'$, as $L'$ varies over all small fields with $L \subseteq L'$ in the class $\mc{F}$.
\end{enumerate}
Then $R \otimes_k K$ is right noetherian.  

In particular, if conditions (i) and (ii) hold for all extensions $k \subseteq K$ in the class, then $R$ is $\mc{F}$-stably right noetherian over $k$.
\end{theorem}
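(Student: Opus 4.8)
The plan is to prove the main assertion — that hypotheses (i) and (ii), for a fixed extension $k \subseteq K$ in the class $\mc{F}$, force $R \otimes_k K$ to be right noetherian — by induction on Gelfand--Kirillov dimension; the ``in particular'' sentence is then immediate. First I would collect two preliminaries. Using property (iii) in the definition of \emph{extensive}, $K$ is the directed union of its small subfields, and for every small $L$ the ring $R \otimes_k K$ is free, hence faithfully flat, as a left module over $R_L := R \otimes_k L$. Also, for every finitely generated field extension $k \subseteq L$ the ring $R_L$ is right noetherian: realize $L$ as a finite tower of simple transcendental and of finite extensions of $k$ and combine the Hilbert basis theorem with the fact that a ring finitely generated as a module over a right noetherian subring is again right noetherian (cf.\ the proof of Lemma~\ref{lem:stableprops}(1)). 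Putting these together, to prove $R \otimes_k K$ right noetherian it suffices to prove: for every small $L$ and every finitely generated right $R_L$-module $M$, the $R_K$-module $M \otimes_L K$ is noetherian — since then, fixing any small field $L_0$, we have $R \otimes_k K = R_{L_0} \otimes_{L_0} K$ and may take $M = R_{L_0}$.

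I would prove ``$M \otimes_L K$ is noetherian over $R_K$'' by induction on $e = \GKdim_L(M)$, which is finite because $\GKdim_k R < \infty$; the case $e = -1$ is trivial. For the inductive step, condition (i) gives a critical composition series $0 = M_0 \subsetneq \dots \subsetneq M_r = M$ over $R_L$ with $M_i/M_{i-1}$ being $d_i$-critical and $d_i \le e$; base changing to $K$ and using that noetherian modules are closed under extensions, it suffices to show each $(M_i/M_{i-1}) \otimes_L K$ is noetherian. For $d_i < e$ this is the inductive hypothesis, so the whole problem reduces to the case that $M$ is $e$-critical over $R_L$. Then Proposition~\ref{prop:homogeneous} shows $M \otimes_L K$ is $e$-homogeneous over $R_K$, so every nonzero submodule of it has GK dimension exactly $e$; moreover one has the following refinement of Lemma~\ref{lem:ccfacts}, proved by the same analysis: if $A \subseteq B$ are noetherian with $\GKdim(B/A) = \GKdim(B)$, then $\ell(A) < \ell(B)$ (equality of lengths would force $A$ to meet every factor of a critical composition series of $B$, hence $\GKdim(B/A) < \GKdim(B)$).

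Now hypothesis (ii) enters. By (ii) the lengths $\ell_{L'}(M \otimes_L L')$ are bounded as $L'$ ranges over small fields containing $L$, and they are nondecreasing in $L'$ by Lemma~\ref{lem:cs-ext}; hence they become constant, say equal to $n$, for all small $L'$ containing some fixed small $L^* \supseteq L$. Replacing $L$ by $L^*$, we may assume $\ell_{L'}(M \otimes_L L') = n$ for all small $L' \supseteq L$, and then the factors of a critical composition series of $M$ over $R_L$ remain $e$-critical under every small base change (any increase in a factor's length would increase the total length). Using this stability, together with the facts that $M \otimes_L L'$ is noetherian over $R_{L'}$ for every small $L'$, that $K$ is the directed union of these, that the inductive hypothesis lets one push any submodule of strictly smaller GK dimension down to a small field (its quotient becomes noetherian, hence finitely presented, hence defined over a small field, and one pulls back), and the $\ell$-monotonicity facts above, one shows that an ascending chain of $R_K$-submodules of $M \otimes_L K$ can have only finitely many factors of GK dimension $e$ (each such factor strictly increases $\ell$, which along the chain is bounded by $n$ once one passes to a large enough small subfield) and cannot have an infinite tail of factors of smaller GK dimension (by the inductive hypothesis). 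Therefore $M \otimes_L K$ is noetherian, completing the induction, and $R \otimes_k K$ is right noetherian.

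The step I expect to be the main obstacle is exactly this passage from the small fields $L'$ to the infinite extension $K$. Critical composition series and the uniform bound of (ii) are only available over the finitely generated — i.e.\ small — subfields of $K$, whereas an arbitrary $R_K$-submodule, or an ascending chain of them, need not be defined over any single small subfield, so one cannot naively take a limit of the small-field data. The resolution is the bootstrap above: submodules with strictly smaller-dimensional quotient are controlled by the inductive hypothesis and can be pushed down to a small field, while the uniform length bound $n$ from (ii), read through the strict monotonicity of $\ell$, eliminates the remaining top-dimensional growth. Arranging the double reduction — on GK dimension, and within it on the critical composition length — so that it never invokes a case of equal or larger GK dimension is the delicate bookkeeping, and it is precisely why the hypotheses bundle exactness, finite partitivity and integrality of GK dimension together with the uniform bound (ii).
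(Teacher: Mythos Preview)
Your overall architecture --- induction on GK dimension, reduction to critical modules, and stabilization of critical-composition-series lengths by enlarging the small base field $L$ --- matches the paper's proof closely. The paper packages the double induction via the lexicographic poset $\Gamma = \mb{Z}_{\geq -1} \times \mb{Z}_{\geq 0}$ and the invariant $D_L(M) = (\GKdim_L(M),\, n(M) - \ell_L(M))$, but this is essentially a reorganization of the same two-parameter descent you describe.

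Where your argument has a genuine gap is the final step, precisely the one you flag as the main obstacle. Once you have reduced to $M$ being $e$-critical over $R_L$ and remaining $e$-critical over every small $L' \supseteq L$, you propose to control an ascending chain $N_1 \subseteq N_2 \subseteq \cdots$ in $M_K$ by tracking $\ell$ along it. But $\ell$ is only defined for noetherian modules over the rings $R_{L'}$ (where condition (i) holds), and the $N_i$ are generally not of the form $(N_i')_K$ for any small $L'$. Your parenthetical fix (``its quotient becomes noetherian, hence finitely presented, hence defined over a small field'') is circular: the inductive hypothesis only asserts that modules of the form $M' \otimes_{L'} K$, with $M'$ finitely generated over some $R_{L'}$ and $\GKdim_{L'}(M') < e$, are noetherian. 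It says nothing about $M_K/N$ for an arbitrary $R_K$-submodule $N$, which is exactly not of this form. So you cannot conclude that $M_K/N$ is noetherian, hence cannot conclude that $N$ is finitely presented, hence cannot push $N$ down to a small field; the $\ell$-bookkeeping never gets started.

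The paper bypasses the chain argument entirely and proves directly that every $R_K$-submodule $N \subseteq M_K$ is finitely generated. Given $0 \neq N$, choose any nonzero element, find a small $L' \supseteq L$ over which it is defined (using property (iii) of extensiveness), and set $P = N \cap M_{L'} \neq 0$. Since $M_{L'}$ is $e$-critical, $\GKdim_{L'}(M_{L'}/P) < e$, and now the key point: $(M_{L'}/P) \otimes_{L'} K \cong M_K/P_K$ \emph{is} of the right shape for the inductive hypothesis, so it is noetherian. Then $N/P_K$ is finitely generated as a submodule of this noetherian module, $P_K$ is finitely generated because $P$ is finitely generated over the noetherian ring $R_{L'}$, and hence $N$ is finitely generated. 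This is exactly the bootstrap you are reaching for, but applied to $M_{L'}/(N \cap M_{L'})$ rather than to $M_K/N$; the former lives over a small field, the latter does not, and that distinction is the whole point.
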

\begin{proof}
Fix the extension $k \subseteq K$ and let $L$ be a small field.  Note that $R_L$ is noetherian, and that condition (i) implies in particular that every finitely generated $R_L$-module has a critical composition series, with well-defined length.  

The main idea of the proof is to show that for all noetherian right $R$-modules 
$M$, the extended module $M_K = M \otimes_k K$ is noetherian over $R_K$, by induction on the GK dimension of modules as well as on the length of critical composition series.   Let $\mb{Z}_{\geq m} = \{ a \in \mb{Z} | a \geq m \}$.  We define the well-ordered poset $\Gamma = (\mb{Z}_{\geq -1}) \times (\mb{Z}_{\geq 0})$, with lexicographic order where $(c, i) \leq (d, j)$ if $c< d$ or if $c = d$ and $i \leq j$.     Let $L$ be a small field.  If $M$ is a finitely generated right $R_L$-module, let $n(M)$ be as in (ii) (for definiteness, let $n(M)$ be 
the smallest possible bound for each $M$).  Then if the critical composition series for $M$ with respect to $\GKdim_L$ has length $m$, we define $D_L(M) = (\GKdim_L(M), n(M)-m)$.  This is a kind of generalized dimension which accounts for both the GK dimension of $M$, as well as 
how much freedom its critical composition series has to grow in length under base field extension.

We claim next that for any extension $L \subseteq E$ in the class, with $E$ small, then $n(M_E) = n(M)$.  For each extension $E \subseteq L'$ in the class with $L'$ small, $L \subseteq L'$ is also in the class and $M_E \otimes_E L' = M \otimes_L L'$; 
thus $n(M_E)$ depends on fewer fields and so $n(M_E) \leq n(M)$.  On the other hand, suppose that $L \subseteq \wt{L}$ is in the class with $\wt{L}$ also small, such that the critical composition series of $M_{\wt{L}}$ acheives the maximum possible critical composition series length $n(M)$.  By condition (iii) in the definition of extensive class, we can find $L \subseteq E\wt{L}  \subseteq F$ with $F$ still small.  By Lemma~\ref{lem:cs-ext}, the length of the critical composition series of $M_F = M_{\wt{L}} \otimes_{\wt{L}} F$ is at least as large as that of $M_{\wt{L}}$, so it is also the maximum value $n(M)$.  Since $F$ contains $E$, this shows $n(M_E) \geq n(M)$.  Thus $n(M_E) = n(M)$ as claimed.

Now for $\alpha \in \Gamma$, consider the following proposition $P(\alpha)$:  for every small field $L$ and every noetherian right $R_L$-module $M$ with $D_L(M) \leq \alpha$, then $M_K = M \otimes_L K$ is a noetherian $R_K$-module.   We prove that $P(\alpha)$ holds for all $\alpha \in \Gamma$ by (transfinite) induction on the well-ordered poset $\Gamma$.
Recall that we use the convention $\GKdim_L(0) = -1$, and only the zero module has dimension $-1$.
Since the zero module is trivially stably noetherian, $P(\alpha)$ is trivially true for any $\alpha = (-1, i)$.
This gives the base cases of the induction.

Now consider some $\alpha = (d, i) \in \Gamma$ with $d \geq 0$ and assume that $P(\beta)$ holds for all $\beta < \alpha$ in $\Gamma$.  Suppose we have a noetherian right $R_L$-module $M$ with $D_L(M) = \alpha$, where $L$ is small.  We must show that the $R_K = R_L \otimes_L K$-module $M \otimes_L K$ is noetherian.
  
Suppose first that there is some extension $L \subseteq L'$ in the class with $L'$ small, such that $M' = M \otimes_L L'$ has a $\GKdim_{L'}$-critical composition series of longer length $m'$ than the length $m$ of the $\GKdim_L$-critical composition series of $M$.   We saw above that $n(M) = n(M')$. Thus $D_{L'}(M') = (d, n(M) - m') < (d, n(M) - m) = D_{L}(M)$.
By the induction hypothesis, $M' \otimes_{L'} K$ is noetherian.  Thus $M \otimes_L K = (M \otimes_L L') \otimes_{L'} K = M' \otimes_{L'} K$ is noetherian as an $R_K$-module as required.  

From now on we may assume that we are not in the situation of the previous paragraph.  Thus given an extension $L \subseteq L'$ in the class 
$\mc{F}$ with $L'$ small, the $R_{L'}$-module $M' = M_{L'}$ has a critical composition series for GK dimension over $L'$ of the same length as the critical composition series of $M$.   Choose a $\GKdim_L$-critical composition series for $M$, say 
$0 = M_0 \subseteq M_1 \subseteq \dots \subseteq M_r = M$, with $N_i = M_i/M_{i-1}$ critical for $1 \leq i \leq r$.   Then by Lemma~\ref{lem:cs-ext}, the series of $M'$ with terms $M_i \otimes_L L'$ can be refined to a $\GKdim_{L'}$-critical composition series for $M'$.  
Since this series must have the same length, each $(M_i \otimes_L L')/(M_{i-1} \otimes_L L') \cong N_i \otimes_L L'$ must remain critical as an $R_{L'}$-module.   In other words, $n(N_i) = 1$ and so $D_L(N_i) = (\GKdim_L(N_i), 0)$.  Now to show that 
$M \otimes_L K$ is noetherian, it suffices to show that each $N_i \otimes_L K$ is noetherian.   If $\GKdim_L(N_i) < d$
then the induction hypothesis applies.

Thus we are left to deal with $\GKdim_L$-critical modules of dimension $d$ which remain critical under all base field extensions $L \subseteq L'$ in the class with $L'$ small.  Change notation back so that $M$ is such a module.   Let $N$ be a nonzero $R_K$-submodule of $M_K = M \otimes_L K$, and identify $M$ with the subset $\{ m \otimes 1 | m \in M \}$ of $M \otimes_L K$.   If $P = N \cap M \neq 0$, then $\GKdim_L(M/P) < d$ since $M$ is $d$-critical, and so $D_L(M/P) < D_L(M)$ and thus 
$(M/P) \otimes_L K$ is noetherian by the induction hypothesis.  Then $(M/P) \otimes_L K \cong (M \otimes_L K)/(P \otimes_L K)$ is noetherian, and so $N/(P \otimes_L K)$ is a finitely generated submodule of $(M \otimes_L K)/(P \otimes_L K)$.  On the other hand, $P$ is a finitely generated $R_L$-module since $M$ is a noetherian module,  and so $P \otimes_L K$ is a finitely generated $R \otimes_L K$-module.  It follows that $N$ is finitely generated over $R_K$.

Finally, if instead $P = N \cap M = 0$, choose some nonzero element $\sum_{i=1}^j m_i \otimes k_i \in M \otimes_L K$ which is in $N$.  
By property (iii) in the definition of extensive class, we can choose an extension $L \subseteq L'$ in the class with $L'$ small, 
such that  $k_i \in L'$ for all $i$.  Write $M \otimes_L K = (M \otimes_L L') \otimes_{L'} K$, and let $M' = M \otimes_L L'$ and $R' = R \otimes_L L' = R_{L'}$.  
By our assumptions the module $M'$ is still critical of dimension $d$.  Identifying $M'$ with the subset $\{ (m' \otimes 1) | m' \in M' \}$ of $M' \otimes_{L'} K = M \otimes_L K$, the argument in the previous paragraph now applies, since $N \cap M' \neq 0$ by choice of $L'$, and again shows that $N$ is finitely generated over $R_K$.  We conclude that every submodule $N$ of $M \otimes_L K$ is finitely generated, and hence $M \otimes_L K$ is noetherian. This completes the proof of the induction step.

By induction on the well-ordered poset $\Gamma$, we conclude that $P(\alpha)$ holds for all $\alpha \in \Gamma$.  In particular, the ring $R$ is a right module of finite GK dimension over $k$ by assumption, so $R \otimes_k K$ is right noetherian.  

It is now obvious that if (i) and (ii) hold for all extensions $k \subseteq K$ in the class $\mc{F}$, then $R \otimes_k K$ is noetherian for all such extensions and so $R$ is $\mc{F}$-stably right noetherian.
\end{proof}

As a first immediate application of Theorem~\ref{thm:reduce}, we show that for noetherian algebras with well-behaved GK dimension, the 
noetherian property is stable under purely transcendental field extensions.   
\begin{theorem}
\label{thm:purely-noeth}
Let $R$ be a right noetherian $k$-algebra with $\GKdim_k R = m < \infty$.  For each $d \geq 0$ write 
$K_d = k(x_1, \dots, x_d)$ and let $R_d = R \otimes_k K_d$.  Assume that for each $d \geq 0$ the GK dimension of finitely generated $R_d$-modules over the field $K_d$ is exact, finitely partitive, and takes integer values.  
Then $R$ is $\mc{PT}$-stably right noetherian; in other words, for any purely transcendental field extension $k \subseteq K$, 
$R \otimes_k K$ is right noetherian.
\end{theorem}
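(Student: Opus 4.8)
The plan is to deduce Theorem~\ref{thm:purely-noeth} from the inductive method of Theorem~\ref{thm:reduce}, applied to the extensive class $\mc{F} = \mc{P}\mc{T}$ of purely transcendental extensions. Recall that $\mc{P}\mc{T}$ was already shown to be extensive. So it suffices to verify that conditions (i) and (ii) of Theorem~\ref{thm:reduce} hold for every purely transcendental extension $k \subseteq K$. First I would unwind what a \emph{small} field $L$ is in this setting: $k \subseteq L \subseteq K$ with both $k \subseteq L$ and $L \subseteq K$ purely transcendental and $k \subseteq L$ finitely generated; thus $L \cong k(x_1, \dots, x_d)$ for some finite $d$, i.e. $L$ is (isomorphic to) one of the fields $K_d$ appearing in the hypothesis.

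For condition (i): since every small field $L$ is $k$-isomorphic to some $K_d$, and $R_L \cong R_d$ under that isomorphism, the hypothesis that GK dimension of finitely generated $R_d$-modules over $K_d$ is exact, finitely partitive, and integer valued transfers verbatim to $R_L$-modules. So (i) is immediate. The substance is condition (ii): given a small field $L$ and a finitely generated right $R_L$-module $M$, I need a uniform upper bound on the lengths of $\GKdim_{L'}$-critical composition series of $M_{L'} = M \otimes_L L'$ as $L'$ ranges over small fields with $L \subseteq L'$ purely transcendental. The key point is that a purely transcendental extension $L \subseteq L'$ is an increasing (directed) union of extensions $L \subseteq L(x_1,\dots,x_r)$ in finitely many variables, and each of those factors as a tower of one-variable purely transcendental extensions. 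Since $R_L$-modules have well-behaved GK dimension, $M$ has a $\GKdim_L$-critical composition series, say of length $\ell(M)$; I would then apply Proposition~\ref{prop:critical} repeatedly — once for each variable adjoined — to conclude that each critical factor of that series remains critical after the extension $L \subseteq L'$. (Proposition~\ref{prop:critical} requires exactness for $R$- and $R_K$-modules and finite partitivity for $R_K$-modules, all of which are guaranteed by the hypothesis applied to the relevant $K_d$'s, together with Lemma~\ref{lem:GKbasics}(1) to see that GK dimension values do not change.) Hence by Lemma~\ref{lem:cs-ext} the series $0 \subseteq M_1 \otimes_L L' \subseteq \dots \subseteq M \otimes_L L'$ is \emph{already} a critical composition series for $M_{L'}$, so $M_{L'}$ has critical length exactly $\ell(M)$. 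Therefore we may take $n(M) = \ell(M)$, independent of $L'$, and condition (ii) holds.

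The one genuinely delicate point — and the step I expect to be the main obstacle — is handling an \emph{infinitely generated} purely transcendental extension $L \subseteq L'$, since Proposition~\ref{prop:critical} is stated only for a one-variable function field. Here I would argue that criticality (and homogeneity) of a finitely generated module is detected at a finitely generated subextension: given a counterexample submodule $0 \neq N \subseteq M_{L'}$ witnessing non-criticality, pick finitely many generators of $N$, note they involve only finitely many of the transcendence generators, and descend to $M \otimes_L L''$ where $L \subseteq L'' \subseteq L'$ is finitely generated (purely transcendental) — this is exactly the descent argument already used in the proofs of Proposition~\ref{prop:homogeneous} and Proposition~\ref{prop:critical}, and property (iii) of extensiveness for $\mc{P}\mc{T}$ is what makes $L''$ available as a small field. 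Then finitely many applications of Proposition~\ref{prop:critical} (adjoining one variable at a time to pass from $L$ up to $L''$) give a contradiction. Wait — I should double-check that in condition (ii) the fields $L'$ are required to be small, hence already finitely generated over $k$; re-reading the statement of Theorem~\ref{thm:reduce}, indeed $L'$ ranges only over small fields, so $L \subseteq L'$ is automatically finitely generated and the infinite-generation worry does not even arise in verifying (ii). So the finite tower of applications of Proposition~\ref{prop:critical} suffices directly, with no descent needed, and the proof is short: verify $\mc{P}\mc{T}$ is extensive (done earlier), verify (i) by the isomorphism $R_L \cong R_d$, verify (ii) via Proposition~\ref{prop:critical} plus Lemma~\ref{lem:cs-ext} as above, and invoke Theorem~\ref{thm:reduce}.
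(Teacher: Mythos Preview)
Your proposal is correct and follows essentially the same route as the paper: apply Theorem~\ref{thm:reduce} with $\mc{F}=\mc{P}\mc{T}$, identify small fields with the $K_d$, read off condition (i) from the hypothesis, and verify condition (ii) by iterating Proposition~\ref{prop:critical} along a finite tower of one-variable extensions so that the original critical composition series of $M$ remains critical after tensoring, giving $n(M)=\ell(M)$. Your mid-proof realization that $L'$ is already small (hence finitely generated over $k$) is exactly right and makes the descent digression unnecessary.
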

\begin{proof}
We aim to use Theorem~\ref{thm:reduce}, with the class $\mc{F}$ equal to the class $\mc{P} \mc{T}$ of purely transcendental extensions.  
Fix some purely transcendental extension $k \subseteq K$ and consider small fields inside $K$, in the terminology of that theorem.  In this case a small field $L$ is $k$-isomorphic to $k(x_1, \dots x_d)$ for some $d \geq 0$.  So hypothesis (i) of Theorem~\ref{thm:reduce} holds by assumption. Similarly, given an extension of small fields $L \subseteq L'$ in $\mc{F}$, then up to isomorphism we have
$L = K_d$ and $L' = K_e$ for some $e \geq d$, and $L' = L(x_{d+1}, \dots, x_e)$.  By induction on Proposition~\ref{prop:critical}, if $M$ is a critical $R_d$-module, then $M \otimes_L L'$ remains critical over $R_e$.   More generally, if $M$ is any finitely generated $R_d$-module 
with a critical composition series $0 = M_0 \subsetneq M_1 \subsetneq \dots \subsetneq M_n = M$ of length $n$, then
$0 = M_0 \otimes_L L' \subsetneq M_1 \otimes_L L' \subsetneq \dots \subsetneq M_n \otimes_L L' = M \otimes_L L'$ is 
a critical composition series of length $n$ for $M_{L'}$.  Thus we can take $n(M) = \ell(M)$ and 
hypothesis (ii) from Theorem~\ref{thm:reduce} holds as well.

Thus Theorem~\ref{thm:reduce} applies and $R$ is $\mc{P} \mc{T}$-stably right noetherian.
\end{proof}

As we noted in Section~\ref{sec:dim}, as far as we are aware there are no known examples of noetherian $k$-algebras 
of finite GK dimension for which GK dimension of finitely generated modules is not exact or not finitely partitive, and the same is true for the integer valued assumption.    So these hypotheses should be viewed as weak restrictions (or possibly not even restrictions).
Thus Theorem~\ref{thm:purely-noeth} suggests that for all reasonable noetherian algebras of finite GK dimension, 
a purely transcendental base field extension should not disrupt the noetherian property.

This paper works with algebras of finite GK dimension since this allows the special methods used in Theorem~\ref{thm:reduce} to go through.  
While many examples important in applications do have finite GK dimension, it is certainly not a necessary condition for 
being stably right noetherian in general.   
\begin{example}
Let $k$ be a field of characteristic $0$ and let $R = A_1(k)$ be the first Weyl algebra over $k$.  The algebra $R$ 
can be described as an iterated Ore extension $R = k[x][y; d/dx]$.  Then by Lemma~\ref{lem:stableprops}(1), since 
$k$ is stably right noetherian over $k$, $R$ is  stably noetherian over $k$ as well.  Then the division ring of fractions $D = Q(R)$ 
of $R$ is also stably noetherian over $k$, since by Lemma~\ref{lem:stableprops}(1) 
this property is also preserved by localization.  However, while $\GKdim(R) = 2$, we have $\GKdim(D) = \infty$, since 
$D$ contains a free subalgebra on two generators \cite{M-L}.
\end{example}
\noindent On the other hand, in the next section we will see that for polynomial identity algebras, being stably right noetherian 
does force the algebra to have finite GK dimension.  In addition, for the homologically smooth graded algebras studied in the final section,
the right noetherian property itself already implies that the algebra has finite GK dimension.

The Gabriel-Rentschler Krull dimension is a dimension function which is naturally adapted to critical composition series, which 
exist for all noetherian modules.  See \cite[Chapter 15]{GW}.  So it is tempting to try to apply the same method as in Theorem~\ref{thm:reduce} using Krull dimension, which would extend its reach.  Unfortunately, Krull dimension can easily get larger when one extends the base field, in contrast to the stability of the GK dimension under base field extension (Lemma~\ref{lem:GKbasics}(1)).  If one could show for a given $k$-algebra $R$ of finite Krull dimension that there is an upper bound on how much the Krull dimension of modules grows for field extensions in the class, the method of Theorem~\ref{thm:reduce} might go through.  Resco showed this for some special classes of algebras, 
in particular Weyl algebras and skew Laurent extensions of fields \cite[Theorem 4.2, Theorem 4.3]{Re}.  However, it appears 
to be a difficult question in general \cite[Remark 4.10]{Re}.

\section{Stably noetherian PI rings}
\label{sec:PI}

In this section we discuss the $\mc{F}$-stably noetherian property for polynomial identity (PI) algebras $R$ over a 
field $k$.  We do not necessarily assume that $R$ is affine.

The basic facts about PI rings can be found in many places, for instance \cite{DF}.  Results about the growth of PI algebras 
can be found in \cite[Chapter 10]{KL}. Suppose that $R$ is a noetherian PI $k$-algebra with $\GKdim_k(R) < \infty$.  If $R$ has minimal primes $P_1, P_2, \dots, P_m$ and nilradical $N = \bigcap P_i$, we have 
$\GKdim_k(R) = \GKdim_k(R/N) = \max \{ \GKdim_k (R/P_i) | 1 \leq i\leq m \}$ \cite[Proposition 10.15]{KL}.   If $M$ is a finitely generated right $R$-module, then $\GKdim_k(M) = \GKdim_k(R/I)$ where $I = \operatorname{r.ann}(M)$ \cite[Lemma 10.18]{KL}.  The value of $\GKdim_k(R/I)$ is an integer \cite[Corollary 10.16]{KL}, and thus $\GKdim_k(M)$ is an integer for any finitely generated right $R$-module $M$.  Moreover, GK dimension is exact for finitely generated $R$-modules \cite[Proposition 10.20]{KL} and finitely partitive \cite[Corollary 10.22]{KL}.   The value of $\GKdim_k(R)$ 
is also related to the theory of transcendence degree:  if $R$ is a prime PI algebra with classical ring of fractions $Q$, then if $Z = Z(Q)$ is its center we have $\GKdim_k(R) = \operatorname{tr.deg}_k(Z)$ \cite[Theorem 10.5]{KL}.   Since GK dimension of modules is integer valued, exact, and finitely partitive, every finitely generated $R$-module $M$ has a critical composition series with respect to GK dimension.

\begin{definition}
Let $\mc{F}$ be an extensive class of field extensions.  We say that the right noetherian PI $k$-algebra 
$R$ has \emph{right $\mc{F}$-stable residues} over $k$ if the classical ring of fractions $Q(R/P)$ of $R/P$ is $\mc{F}$-stably right noetherian over $k$, for all prime factor rings $R/P$ of $R$. 
\end{definition}

The terminology alludes to the commutative case, where the rings $Q(R/P)$ are exactly 
the residue fields of the local rings $R_P$ of points $P$ in $\spec R$.
It is clear from Lemma~\ref{lem:stableprops}(1) that if $R$ is $\mc{F}$-stably right noetherian over $k$, then it has right $\mc{F}$-stable residues over $k$.  We are going to show that the converse holds for PI algebras of finite GK dimension.  In fact, we know of no example where the converse does not hold, though it is especially useful in the PI case since PI division rings have such a simple structure. 

First, we give some basic properties of the notion of having right $\mc{F}$-stable residues.
\begin{lemma}
\label{lem:stable-res-fields}
Let $\mc{F}$ be a extensive class of field extensions.   Let $R$ be a right noetherian $k$-algebra with right $\mc{F}$-stable residues over $k$.
\begin{enumerate}
\item  Every factor ring of $R$, localization of $R$ at a right denominator set, and polynomial extension $R[x]$ 
also has right $\mc{F}$-stable residues over $k$.  

\item If $k \subseteq \ell$ is a finitely generated field extension in the class $\mc{F}$, then $R \otimes_k \ell$ has right $\mc{F}$-stable residues over $\ell$.
\end{enumerate}
\end{lemma}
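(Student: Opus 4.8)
The plan is to verify, in each case, that every prime factor ring $T$ of the ring under consideration has classical quotient ring $Q(T)$ isomorphic to the classical quotient ring of a ring that is already $\mc{F}$-stably right noetherian over the relevant base field, and then read off the conclusion from the closure properties in Lemma~\ref{lem:stableprops}. I will use throughout that $R$ is a PI ring (this is built into the very definition of $\mc{F}$-stable residues): for a prime $P$ of $R$ the ring $S:=R/P$ is prime PI, so by Posner's theorem $Q(S)$ is simple Artinian and equals the central localization $S\,(Z(S)\setminus\{0\})^{-1}$, and the hypothesis is exactly that each such $Q(S)$ is $\mc{F}$-stably right noetherian over $k$. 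Recall also (standard facts I will quote, not reprove) that if $R/P\subseteq B\subseteq Q(R/P)$ with $B$ obtained by inverting regular elements of $R/P$, then $Q(B)=Q(R/P)$; and that a nonzero central element of a prime ring is regular.

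First I would dispose of the two easy cases in part (1). The prime factor rings of $R/I$ are the rings $R/P$ with $P\supseteq I$ prime, so they are prime factor rings of $R$ and nothing is to be proved. For a localization $RX^{-1}$ at a right denominator set $X$, the theory of prime ideals under Ore localization (see \cite[Chapter 10]{GW}) identifies the prime factor rings of $RX^{-1}$ with the rings $RX^{-1}/PX^{-1}\cong (R/P)\overline{X}^{\,-1}$, where $P$ runs over the primes of $R$ disjoint from $X$ and $\overline{X}$ is the image of $X$ in $R/P$; since $R/P$ embeds in $(R/P)\overline{X}^{\,-1}$ and the elements of $\overline{X}$ are units there, they are regular in $R/P$, so $(R/P)\overline{X}^{\,-1}$ sits between $R/P$ and $Q(R/P)$ and hence has classical quotient ring $Q(R/P)$, which is $\mc{F}$-stably right noetherian over $k$ by hypothesis.

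The polynomial case and part (2) are parallel, and this is where the work lies. For $R[x]$: given a prime $P'$ of $R[x]$, its contraction $P:=P'\cap R$ is prime, $P[x]\subseteq P'$, and writing $S=R/P$ we get $R[x]/P'\cong S[x]/I$ for a prime $I$ of $S[x]$ with $I\cap S=0$. The set $X:=Z(S)\setminus\{0\}$ is still central in $S[x]$, with $S[x]X^{-1}=Q(S)[x]$, and since $I\cap Z(S)=0$ the image $\overline{X}$ of $X$ in the prime ring $B:=S[x]/I$ consists of nonzero central, hence regular, elements; so $B\hookrightarrow B\overline{X}^{\,-1}=Q(S)[x]/I'$ (with $I'=IQ(S)[x]$) exhibits a partial quotient ring, whence $Q(R[x]/P')=Q\big(Q(S)[x]/I'\big)$. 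Now $Q(S)$ is $\mc{F}$-stably right noetherian over $k$, so the polynomial extension $Q(S)[x]$ is, so its factor ring $Q(S)[x]/I'$ is, so the localization $Q\big(Q(S)[x]/I'\big)$ of that factor ring is — each step by Lemma~\ref{lem:stableprops}(1). For part (2), $R\otimes_k\ell$ is still PI and right noetherian (the latter because $\ell$ is finite over a rational function field $k(t_1,\dots,t_d)$, and right noetherianity is preserved by polynomial extension, localization, and finite ring extension); for a prime $P^*$ of $R\otimes_k\ell$, its contraction $P:=P^*\cap R$ is prime since $(I\otimes_k\ell)(J\otimes_k\ell)\subseteq P^*$ whenever $IJ\subseteq P$, and with $S=R/P$ one gets $(R\otimes_k\ell)/P^*\cong (S\otimes_k\ell)/I$ with $I$ prime and $I\cap S=0$. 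Localizing at the same central Ore set $X=Z(S)\setminus\{0\}\subseteq S\otimes_k\ell$ gives, exactly as before, a partial quotient ring $(S\otimes_k\ell)/I\hookrightarrow (Q(S)\otimes_k\ell)/I'$, so $Q\big((R\otimes_k\ell)/P^*\big)=Q\big((Q(S)\otimes_k\ell)/I'\big)$; since $Q(S)$ is $\mc{F}$-stably right noetherian over $k$ and $k\subseteq\ell$ lies in $\mc{F}$, Lemma~\ref{lem:stableprops}(4) makes $Q(S)\otimes_k\ell$ $\mc{F}$-stably right noetherian over $\ell$, and then so are its factor ring $(Q(S)\otimes_k\ell)/I'$ and the localization of that, by Lemma~\ref{lem:stableprops}(1).

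The main obstacle — and the reason this is not a purely formal manipulation — is the identification of $Q(S[x]/I)$ with the quotient ring of a factor ring of $Q(S)[x]$ (and similarly over $\ell$). A naive localization of $S[x]/I$ at the images of the regular elements of $S$ need not be injective, so I will instead localize at the central Ore set $Z(S)\setminus\{0\}$: primeness of $S[x]/I$ then forces torsionfreeness for free, while Posner's theorem guarantees that this central set is nonetheless large enough that inverting it in $S$ already yields $Q(S)$. This is the one place where the PI hypothesis on $R$, as opposed to mere right noetherianity, is genuinely used.
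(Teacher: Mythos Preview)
Your proof is correct. For part~(1) the approaches coincide except that you localize a prime factor $S=R/P$ at its nonzero central elements (via Posner) while the paper localizes at all regular elements (via Goldie); your choice makes the Ore condition in $S[x]$ automatic, at the cost of invoking the PI hypothesis, which the paper's argument does not actually need here. The real divergence is in part~(2). The paper observes that a finitely generated extension $\ell$ is a localization of some $k[x_1,\dots,x_n]/I$, so that $R\otimes_k\ell$ is a localization of a factor ring of $R[x_1,\dots,x_n]$; three applications of part~(1) then give right $\mc{F}$-stable residues over~$k$, and Lemma~\ref{lem:stableprops}(3) upgrades this to over~$\ell$. Thus (2) is reduced entirely to (1). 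Your direct argument --- contracting a prime of $R\otimes_k\ell$ to $R$, centrally localizing to reach $Q(S)\otimes_k\ell$, and invoking Lemma~\ref{lem:stableprops}(4) --- is also valid but essentially re-runs the polynomial-case reasoning a second time; the paper's reduction is more economical, while yours makes the structural parallel between the two cases explicit. Your closing remark that PI is ``genuinely used'' at the localization step is therefore slightly off: the paper's Goldie-based version shows the lemma holds for right noetherian algebras generally, the PI restriction coming only from how the definition is framed.
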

\begin{proof}
It is obvious by definition that $R/I$ has right $\mc{F}$-stable residues over $k$ for any ideal $I$.
Consider next a localization $RX^{-1}$ where $X$ is a right denominator set in $R$.  Any prime ideal $P$ of the ring $RX^{-1}$ has the form $P = NX^{-1}$ for some prime ideal $N$ of $R$ such that $N \cap X = \emptyset$, 
and $Q(RX^{-1}/P) \cong Q(R/N)$ \cite[Theorem 10.20, Exercise 10O]{GW}.  So $RX^{-1}$ also has right $\mc{F}$-stable residues over $k$.  Finally, consider the polynomial ring $R[x]$.
If $N \subseteq R[x]$ is a prime ideal then let $R' = R/(N \cap R)$, where $N \cap R$ is prime in $R$.  Thus $R[x]/N \cong R'[x]/N'$ where $N' = N/((N \cap R)[x])$.  We have seen that $R'$ also has right $\mc{F}$-stable residues, so it is enough to work with the prime ring $R'$ and the prime ideal $N'$ of $R'[x]$, where $N' \cap R' = 0$.  Changing notation back, we assume that 
$R$ is prime, and that we have a prime ideal $N \subseteq R[x]$ such that $N \cap R = 0$.   
Now let $X$ be the set of regular elements in $R$ and let $S = Q(R) = RX^{-1}$ be 
the classical ring of fractions of $R$.  Consider $R[x]X^{-1} \cong S[x]$.  Since $N \cap X = \emptyset$, 
$NX^{-1}$ is a prime ideal of $S[x]$, and $Q(R[x]/N) \cong Q(S[x]/NX^{-1})$.
By the assumption that $R$ has right $\mc{F}$-stable residues over $k$, $S$ is $\mc{F}$-stably right noetherian over $k$.   Then $Q(S[x]/NX^{-1})$ is also $\mc{F}$-stably right noetherian over $k$, by Lemma~\ref{lem:stableprops}(1).  
So $R[x]$ has right $\mc{F}$-stable residues.

(2). We can write $\ell$ as a localization of a finitely generated $k$-algebra $k[x_1, \dots, x_n]/I$, and so 
$T = R \otimes_k \ell$ is a localization of $R[x_1, \dots, x_n]/I R[x_1, \dots, x_n]$.  By part (1), $T$ has 
$\mc{F}$-stable residues over $k$.  Then for every prime $P$ of $T$, the quotient ring $Q(T/P)$ is $\mc{F}$-stably right noetherian over $k$.
By Lemma~\ref{lem:stableprops}(3),  $Q(T/P)$ is also $\mc{F}$-stably right noetherian over $\ell$.
So $T$ has right $\mc{F}$-stable residues over $\ell$.
\end{proof}

\begin{theorem}
\label{thm:reduce-to-residues}
Let $\mc{F}$ be a extensive class of field extensions of $k$.  Let $R$ be a right noetherian PI $k$-algebra with $\GKdim_k(R) < \infty$.  Then $R$ has right $\mc{F}$-stable residues if and only if $R$ is $\mc{F}$-stably right noetherian.
\end{theorem}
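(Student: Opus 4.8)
The plan is to prove the reverse implication directly from Lemma~\ref{lem:stableprops}(1), and the forward implication by applying Theorem~\ref{thm:reduce}. The reverse implication is immediate: if $R$ is $\mc{F}$-stably right noetherian, then for each prime $P$ the factor ring $R/P$ is $\mc{F}$-stably right noetherian, and then so is its localization $Q(R/P)$; thus $R$ has right $\mc{F}$-stable residues. For the forward implication, assume $R$ has right $\mc{F}$-stable residues and fix an extension $k \subseteq K$ in $\mc{F}$; I would verify conditions (i) and (ii) of Theorem~\ref{thm:reduce} for the small fields $L$ inside $K$. (By Lemma~\ref{lem:prime-reduce} one may as well assume $R$ is prime, which streamlines the bookkeeping but is not essential.) For condition (i): since $L$ is a finitely generated field extension of $k$, the ring $R_L = R \otimes_k L$ is a localization of a factor ring of a polynomial extension $R[x_1,\dots,x_n]$, hence is noetherian and PI, and $\GKdim_L(R_L) = \GKdim_k(R) < \infty$ by Lemma~\ref{lem:GKbasics}(1); so the standard facts about GK dimension over noetherian PI algebras recalled at the start of this section give that GK dimension of finitely generated $R_L$-modules is exact, finitely partitive, and integer valued.

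For condition (ii), fix a small $L$ and a finitely generated $R_L$-module $M$; I would bound the length of a $\GKdim_{L'}$-critical composition series of $M_{L'}$, uniformly in the small fields $L' \supseteq L$ with $L \subseteq L'$ in $\mc{F}$, by induction on $d = \GKdim_L(M)$. By choosing a critical composition series of $M$ and invoking Lemma~\ref{lem:cs-ext} together with Lemma~\ref{lem:ccfacts}, it suffices to bound $\ell_{L'}(N_{L'})$ for each critical factor $N$; so assume $M$ itself is $d$-critical. A $d$-critical module over a noetherian PI algebra is isomorphic to a uniform right ideal of a prime factor ring $A = R_L/P$ with $\GKdim_L(A) = d$ (using that noetherian PI rings are fully bounded, and that a proper prime quotient of a prime noetherian PI algebra has strictly smaller GK dimension by \cite[Proposition 5.1]{KL}). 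Hence $M_{L'}$ embeds in $A_{L'}$, and by Lemma~\ref{lem:ccfacts}(1) it is enough to bound $\ell_{L'}(A_{L'})$, for each prime factor $A$ of $R_L$.

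To bound $\ell_{L'}(A_{L'})$: write $Q = Q(A) \cong M_t(D)$ with $D$ a PI division ring, finite-dimensional over its center $Z$, and $\operatorname{tr.deg}_L(Z) = d$. By Lemma~\ref{lem:stable-res-fields}(2), $Q$ is $\mc{F}$-stably right noetherian over $L$, hence so is $D$ (being Morita equivalent to $Q$), and, since $D$ is a finite free and so faithfully flat $Z$-module, so is $Z$ by Lemma~\ref{lem:stableprops}(2). The localization of $A_{L'}$ at the regular elements inherited from $A$ is $Q_{L'} = M_t(D \otimes_L L')$, and $D \otimes_L L'$ is an Azumaya algebra over the commutative noetherian ring $Z \otimes_L L'$; moreover the number of minimal (and, in characteristic $p$, embedded) primes of $Z \otimes_L L'$, together with the lengths of the corresponding localizations and the nilpotency index of the nilradical, are bounded independently of $L'$. (For purely transcendental $\mc{F}$ this is clear because $Z \otimes_L L'$ is then a domain; in general one uses the $\mc{F}$-stable noetherian property of $Z$ together with Theorem~\ref{thm:vamos} and its analogues for general field extensions, which force $Z$ to be a finitely generated field extension of $L$.) Now the number of $d$-dimensional critical factors in any critical composition series of $A_{L'}$ is bounded by the reduced rank of $A_{L'}$ over its own classical ring of fractions — because over a prime noetherian PI algebra a finitely generated module of full GK dimension is not torsion — and, transporting the computation through the Morita equivalence between $Q_{L'}$-modules and $(D \otimes_L L')$-modules and thence down to $Z \otimes_L L'$, the displayed bounds give a uniform bound on this reduced rank. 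The critical factors of $A_{L'}$ of GK dimension strictly less than $d$ are controlled by the inductive hypothesis (applied to $A_{L'}$ modulo a suitable submodule of full dimension, via Lemma~\ref{lem:ccfacts}). This verifies (ii), so Theorem~\ref{thm:reduce} yields that $R \otimes_k K$ is right noetherian; as $k \subseteq K$ was an arbitrary extension in $\mc{F}$, $R$ is $\mc{F}$-stably right noetherian.

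The main obstacle is the verification of (ii), i.e. the uniform bound on how much critical composition series lengthen under base field extension. The essential difficulties are: first, the reduced rank computed with respect to the regular elements of $A$ alone can be infinite, since a base field extension of a prime PI algebra produces new prime quotients of full GK dimension that are not torsion over the original regular elements — this forces one to pass to $A_{L'}$'s own ring of fractions and hence to the center $Z$, where the hypothesis of $\mc{F}$-stable residues is exactly what is needed; and second, one must bound, uniformly in $L'$, the number and multiplicities of the prime components of $Z \otimes_L L'$, which is where finite generation of $Z$ over $L$ (or, in the purely transcendental case, separability of the extensions in $\mc{F}$) enters.
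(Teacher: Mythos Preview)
Your overall strategy---prove the easy direction via Lemma~\ref{lem:stableprops}(1), then verify hypotheses (i) and (ii) of Theorem~\ref{thm:reduce}---matches the paper's, and your treatment of (i) is fine. The divergence, and the trouble, is entirely in (ii).

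The paper's argument for (ii) is much shorter. Rather than reducing to critical modules and identifying them as uniform right ideals of prime factors, it reduces via a prime series \cite[Proposition 3.13]{GW} to \emph{prime} modules $M$ with annihilator $P$; such $M$ is automatically $d$-homogeneous for $d = \GKdim_L(T)$, $T = R_L/P$, so $M_{L'}$ is $d$-homogeneous (Proposition~\ref{prop:homogeneous}) and every factor of its critical composition series is $d$-critical. Localizing at the nonzero central elements $X$ of $T$ (Posner) preserves $d$-criticality by Lemma~\ref{lem:loc-crit}(2), so $\ell_{L'}(M_{L'}) = \ell_{L'}(MX^{-1} \otimes_L L')$. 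Now comes the key move you are missing: instead of analyzing each $L'$, pass once and for all to $K$. The module $MX^{-1} \otimes_L K$ is finitely generated over $Q(T) \otimes_L K$, which is right noetherian by the stable-residues hypothesis via Lemma~\ref{lem:stable-res-fields}(2), and is still PI of finite GK dimension over $K$; hence it has a critical composition series of some fixed length $\ell$. By Lemma~\ref{lem:cs-ext} applied to the extension $L' \subseteq K$, this single number $\ell$ bounds the length at every intermediate $L'$. That is the entire argument---no induction on $d$, no reduced rank, no analysis of $Z \otimes_L L'$.

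Your route has genuine gaps. First, $A_{L'}$ need not be semiprime (for inseparable $L \subseteq L'$), so ``reduced rank over its own classical ring of fractions'' is not well-defined as stated, and the slogan about modules of full GK dimension not being torsion does not apply to $A_{L'}$. Second, for a general extensive class $\mc{F}$ the $\mc{F}$-stable noetherian property of $Z$ does \emph{not} force $Z$ to be a finitely generated extension of $L$: already for $\mc{F} = \mc{PT}$ it only gives finite transcendence degree (Corollary~\ref{cor:comm-purely-noeth}), so the uniform bounds you assert on the prime structure of $Z \otimes_L L'$ are not available by the route you describe. Third, your induction on $d$ is vacuous: since $A$ is $d$-homogeneous, so is $A_{L'}$, and there are no critical factors of dimension below $d$ to which the inductive hypothesis could apply. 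The paper's device of comparing every $L'$ to the single fixed extension $K$ via Lemma~\ref{lem:cs-ext} sidesteps all of this at once.
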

\begin{proof}
If $R$ is $\mc{F}$-stably right noetherian, then it is immediate that $R$ has right $\mc{F}$-stable 
residues by Lemma~\ref{lem:stableprops}(1).

Now assume that $R$ has right $\mc{F}$-stable residues over $k$.  To prove that $R$ is $\mc{F}$-stably right noetherian, we apply Theorem~\ref{thm:reduce}.  Fix an extension $k \subseteq K$ in $\mc{F}$ and consider a small field $L$; recall this 
means that  $k \subseteq L \subseteq K$ with $k \subseteq L$ and $L \subseteq K$ in $\mc{F}$ and with $k \subseteq L$ a finitely generated 
extension.  Then $R_L = R \otimes_k L$ is also noetherian.  The ring $R_L$ is also still a PI $L$-algebra (for example, since $R$ 
must satisfy a multilinear polyomial identity \cite[Remark A.1.2.10]{DF}, and such an identity continues to be  
an identity for $R_L$).  Also, $\GKdim_L(R_L) = \GKdim_k(R) < \infty$, and hence GK dimension of $R_L$-modules over $L$ is exact and partitive and takes integer values.  So Hypothesis (i) of Theorem~\ref{thm:reduce} holds.

To vertify Hypothesis (ii), we need to show, for all finitely generated right $R_L$-modules $M$, that there is an upper bound $n(M)$ on the lengths of the critical composition series of $M' = M \otimes_L L'$, for all small $L'$ with $L \subseteq L'$ in the class $\mc{F}$.  Consider the set 
$\mc{B}$ of all such $R_L$-modules $M$  for which the bound $n(M)$ exists; we need to prove that $\mc{B}$ contains all finitely generated $R_L$-modules.
It follows from Lemma~\ref{lem:ccfacts}(2) that given a short exact sequence $0 \to M \to S \to N \to 0$ of finitely generated 
right $R_L$-modules, if $M$ and $N$ are in $\mc{B}$ then so is $S$ (and in fact we can take $n(S) = n(M) + n(N)$).
Recall that a \emph{prime module} is a right module $M$ such that $\operatorname{r.ann}_R(M) = P$ is prime, and where every nonzero submodule of $M$ 
also has annihilator $P$.  Since a finitely generated right $R_L$-module $M$ is noetherian, it has a prime series \cite[Proposition 3.13]{GW}; that is, a sequence of submodules $0 = M_0 \subsetneq M_1 \subsetneq \dots \subsetneq M_n = M$ for which each $M_i/M_{i-1}$ is a prime module for some prime $P_i$, for all $1 \leq i \leq n$.  Using this, it suffices to prove that each prime module is in $\mc{B}$.  So now let $M$ be a prime $R_L$-module with $\operatorname{r.ann}_{R_L}(M) = P$.  Let $T = R_L/P$.  Given any nonzero submodule $M'$ of $M$, by we have $\operatorname{r.ann}_{R_L}(M') = P$ and so $\GKdim(M') = \GKdim(R_L/P)$ (using the PI property).  Thus $M$ is $d$-homogeneous, where $d = \GKdim_L(T)$.  

Now $M_{L'} = M \otimes_L L'$ is a $d$-homogeneous $T_{L'} = T \otimes_L L'$-module, by Proposition~\ref{prop:homogeneous}.  Suppose it has a critical composition series  $0 = M_0 \subsetneq M_1 \subsetneq \dots \subsetneq M_n = M_{L'}$ over $T_{L'}$.  Then each 
factor $M_i/M_{i-1}$ is $d$-critical.  Note that $\GKdim_{L'}(T_L') = \GKdim_L(T) = d$.  Now since $T$ is a prime PI algebra, its center $Z$ is a domain and its PI classical ring of fractions $Q(T)$ is equal to $TX^{-1}$, where $X= Z \setminus \{0 \}$, by Posner's theorem \cite[Theorem B.6.5]{DF}.  Note that $Y = \{ (x \otimes 1) | x \in X \}$ consists of central regular elements for the ring $T \otimes_L L'$,  and that $(T \otimes_L L')Y^{-1} \cong Q(T) \otimes_L L'$.
Then the series $0 = M_0 Y^{-1} \subsetneq M_1 Y^{-1} \subsetneq \dots \subsetneq M_n Y^{-1} = MX^{-1} \otimes_L L'$ 
has factors $(M_i/M_{i-1}) Y^{-1}$ which are $d$-critical $(T \otimes_L L') Y^{-1} = Q(T) \otimes_L L'$-modules, 
by Lemma~\ref{lem:loc-crit}(2), and so 
this is a critical composition series for $MX^{-1} \otimes_L L'$.  Now the further base field extension 
$MX^{-1} \otimes_L L' \otimes_{L'} K = MX^{-1} \otimes_L K$ is a finitely generated module over $TX^{-1} \otimes_L K = Q(T) \otimes_L K$.
Since $R$ has $\mc{F}$-stable residues over $k$, $R \otimes_k L$ has $\mc{F}$-stable residues over $L$, by Lemma~\ref{lem:stable-res-fields}(2).  So $Q(T) \otimes_L K$ is right noetherian.  In particular, $MX^{-1} \otimes_L K$  has a critical composition series of some length $\ell$, which is greater than or equal to the length $n$ of the critical composition series for $MX^{-1} \otimes_L L'$ by Lemma~\ref{lem:cs-ext}.  It follows that we can take $n(M) = \ell$ to satisfy hypothesis (ii). 

Now by Theorem~\ref{thm:reduce}, $R \otimes_k K$ is right notherian, and since $k \subseteq K$ is an arbitrary extension 
in the class, $R$ is $\mc{F}$-stably right noetherian.
\end{proof}

Because simple PI rings are matrix rings over division rings which are finite over their centers, it is easy to understand when they are stably right noetherian.  Thus using the previous result, we can now completely characterize stably right noetherian PI algebras.
\begin{corollary}
\label{cor:PI-char}
Let $R$ be a right noetherian PI $k$-algebra.  Then $R$ is stably right noetherian over $k$ if and only if for all prime ideals $P$ of $R$, the center $Z$ of $Q(R/P)$ is a finitely generated field extension of $k$. 
\end{corollary}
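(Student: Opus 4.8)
The plan is to read off the structure of the rings $Q(R/P)$ and then combine V{\'a}mos's Theorem~\ref{thm:vamos} with Theorem~\ref{thm:reduce-to-residues}. For a prime $P$ of $R$, the ring $R/P$ is prime, right noetherian and PI, hence right Goldie, so $Q(R/P)$ is a simple artinian PI ring and therefore finite-dimensional over its center; thus $Q(R/P)\cong M_t(D)$ with $D$ a division ring, $Z(D)=Z:=Z(Q(R/P))$, and $r:=\dim_Z D<\infty$. Since $R$ is a $k$-algebra, $k$ embeds centrally in $R/P$ and hence in $Z$, so $k\subseteq Z$ is a field extension to which Theorem~\ref{thm:vamos} applies. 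I will treat the two implications separately.

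For the forward direction, suppose $R$ is stably right noetherian over $k$ and fix a prime $P$. By Lemma~\ref{lem:stableprops}(1) the factor ring $R/P$, and then its classical localization $Q(R/P)$, are stably right noetherian over $k$. Specializing the base extension to $k\subseteq Z$, the ring $Q(R/P)\otimes_k Z\cong M_t(D\otimes_k Z)$ is right noetherian, hence so is $D\otimes_k Z$. The key observation is that $Z\otimes_k Z$ is a central subring of $D\otimes_k Z$ over which the latter is free of rank $r$, using that $D$ is free of rank $r$ as a $Z$-module. I would then note that freeness makes $D\otimes_k Z$ faithfully flat over $Z\otimes_k Z$, so that $I(D\otimes_k Z)\cap(Z\otimes_k Z)=I$ for every ideal $I$ of $Z\otimes_k Z$; an ascending chain of ideals of $Z\otimes_k Z$ therefore extends to an ascending chain of right ideals of $D\otimes_k Z$, which must stabilize. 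Hence $Z\otimes_k Z$ is noetherian, and Theorem~\ref{thm:vamos} forces $Z$ to be a finitely generated field extension of $k$.

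For the converse, assume that $Z(Q(R/P))$ is a finitely generated field extension of $k$ for every prime $P$ of $R$. I would first check that each $Q(R/P)$ is stably right noetherian over $k$: writing $Q(R/P)\cong M_t(D)$ with $Z=Z(D)$ finitely generated over $k$, the field $Z$ is a localization of an affine commutative $k$-algebra and hence is stably noetherian over $k$ by the remarks of the introduction; so for any extension $k\subseteq K$ the ring $Z\otimes_k K$ is commutative noetherian, $D\otimes_k K$ is a finitely generated module over its central subring $Z\otimes_k K$ and so is noetherian, and therefore $M_t(D\otimes_k K)\cong Q(R/P)\otimes_k K$ is right noetherian. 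Thus $R$ has right $\mc F$-stable residues over $k$ for $\mc F$ the class of all field extensions. To finish I would invoke Lemma~\ref{lem:prime-reduce} to reduce to the case where $R$ is prime (the hypothesis passes to each $R/P_i$ since the primes of $R/P_i$ are exactly the primes of $R$ containing the minimal prime $P_i$); then $\GKdim_k R=\trdeg_k Z(Q(R))<\infty$ by \cite[Theorem~10.5]{KL}, so Theorem~\ref{thm:reduce-to-residues} applies and concludes that $R$, having $\mc F$-stable residues, is stably right noetherian.

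I expect the only genuinely new small arguments to be the descent of the noetherian property from $D\otimes_k Z$ to $Z\otimes_k Z$ via freeness in the forward direction, and the reduction to prime factor rings in the converse; the latter is needed precisely in order to secure the finite-GK-dimension hypothesis of Theorem~\ref{thm:reduce-to-residues}, since for a prime PI algebra that hypothesis is equivalent to the finite generation of the center of its ring of fractions. Beyond these points, the corollary should be essentially a bookkeeping exercise built on the machinery already developed, and I do not anticipate a serious obstacle.
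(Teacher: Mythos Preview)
Your proposal is correct and follows essentially the same route as the paper: both directions hinge on the structure of $Q(R/P)$ via Posner and Kaplansky, V\'amos's theorem, faithful flatness to pass between $Q(R/P)$ and its center, and Theorem~\ref{thm:reduce-to-residues} for the converse. The only cosmetic differences are that the paper packages your freeness/descent argument in the forward direction as a direct appeal to Lemma~\ref{lem:stableprops}(2), and in the converse it obtains $\GKdim_k R<\infty$ directly from the minimal primes (via $\GKdim_k(R)=\max_i\GKdim_k(R/P_i)$) rather than first invoking Lemma~\ref{lem:prime-reduce}; neither change affects the substance.
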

\begin{proof}
Consider a prime ideal $P$ of $R$.  Since $R/P$ is a prime PI algebra, so is its classical ring of fractions $Q(R/P)$, again by Posner's Theorem \cite[Theorem B.6.5]{DF}.  So $Q(R/P)$ is a simple PI ring, and thus it is a matrix ring over a division ring $D$ which
is finite over its center $Z$, by Kaplansky's theorem \cite[B.4.13]{DF}.  Since $Q(R/P)$ is finite and free over $Z$, $Q(R/P)$ is 
stably right noetherian over $k$ if and only if $Z$ is, using Lemma~\ref{lem:stableprops}(1,2).  Since $Z$ is a field, it is stably right noetherian over $k$ if and only if it is a finitely generated field extension of $k$, by Theorem~\ref{thm:vamos} and Lemma~\ref{lem:stableprops}(1).

Now if $R$ is stably right noetherian over $k$, we know that $Q(R/P)$ is stably right noetherian over $k$ for each $P$ by Lemma~\ref{lem:stableprops}(1), and so its center $Z$ is a finitely generated field extension of $k$, by the first paragraph.  Conversely, assume that the center $Z$ of each $Q(R/P)$ is a finitely generated field extension of $k$.  In particular, for each minimal prime $P$ of $R$, $\GKdim_k(R/P) <\infty$ (since this is equal to the transcendence degree of the center $Z$ of $Q(R/P)$ over $k$, which must be finite).  Thus $\GKdim_k(R) < \infty$.  By the first paragraph, $R$ has right $\mc{F}$-stable residues over $k$, where $\mc{F}$ is the class of all field extensions.  Then by Theorem~\ref{thm:reduce-to-residues}, $R$ is stably right noetherian over $k$.
\end{proof}

The result above is nontrivial already in the commutative case, in which the statement simplifies as follows:
\begin{corollary}
\label{cor:comm-char}
Let $R$ be a commutative noetherian $k$-algebra.  Then $R$ is stably right noetherian over $k$ if and only if for all prime ideals $P$ of $R$, the fraction field $Q(R/P)$ is a finitely generated field extension of $k$. 
\end{corollary}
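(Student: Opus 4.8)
The plan is simply to invoke Corollary~\ref{cor:PI-char}, observing that the commutative hypothesis makes every ingredient of that statement transparent. First I would note that any commutative $k$-algebra $R$ satisfies the polynomial identity $xy - yx = 0$, so $R$ is a right noetherian PI $k$-algebra and Corollary~\ref{cor:PI-char} applies directly (no finite GK dimension hypothesis is needed, as that is absorbed into the proof of Corollary~\ref{cor:PI-char}). Also, for a commutative ring ``right noetherian'' and ``noetherian'' coincide, and ``stably right noetherian'' matches Definition~\ref{def:sn1}.

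Next I would analyze the criterion appearing in Corollary~\ref{cor:PI-char} in the commutative setting. Fix a prime ideal $P$ of $R$. Then $R/P$ is a commutative integral domain, and its classical ring of fractions $Q(R/P)$ is precisely the fraction field of $R/P$. Being a field, and in particular commutative, $Q(R/P)$ equals its own center; that is, the center $Z$ of $Q(R/P)$ is $Q(R/P)$ itself. Consequently the condition ``the center $Z$ of $Q(R/P)$ is a finitely generated field extension of $k$'' is literally the condition ``$Q(R/P)$ is a finitely generated field extension of $k$.''

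Substituting this equivalence into the biconditional of Corollary~\ref{cor:PI-char} yields both implications at once: $R$ is stably right (equivalently, stably) noetherian over $k$ if and only if $Q(R/P)$ is a finitely generated field extension of $k$ for every prime ideal $P$ of $R$. I do not anticipate any real obstacle here, since the result is a pure specialization; the only points requiring a word of care are the identifications ``PI via $xy-yx$,'' ``$Q(R/P)$ is the fraction field of the domain $R/P$,'' and ``a field is its own center,'' all of which are immediate.

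\begin{proof}
Every commutative $k$-algebra satisfies the polynomial identity $xy - yx = 0$, so $R$ is a (right) noetherian PI $k$-algebra, and Corollary~\ref{cor:PI-char} applies. For any prime ideal $P$ of $R$, the quotient $R/P$ is a commutative integral domain, so its classical ring of fractions $Q(R/P)$ is the fraction field of $R/P$. As a field, $Q(R/P)$ is commutative, hence equals its own center $Z = Z(Q(R/P))$. Therefore the condition in Corollary~\ref{cor:PI-char} that $Z$ be a finitely generated field extension of $k$ is exactly the condition that $Q(R/P)$ be a finitely generated field extension of $k$. Hence, by Corollary~\ref{cor:PI-char}, $R$ is stably right noetherian over $k$ if and only if $Q(R/P)$ is a finitely generated field extension of $k$ for all prime ideals $P$ of $R$.
\end{proof}
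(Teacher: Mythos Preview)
Your proposal is correct and matches the paper's intended argument: the paper presents this corollary immediately after Corollary~\ref{cor:PI-char} as the commutative specialization, with no separate proof, relying exactly on the observations you spell out (commutative $\Rightarrow$ PI, and $Z(Q(R/P)) = Q(R/P)$ when $R/P$ is a commutative domain).
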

\noindent  This corollary answers \cite[Question 2.55]{Fa} in Farina's work.

It is natural to wonder if one really has to look at all prime factor rings in the characterizations of Corollary~\ref{cor:PI-char} 
and Corollary~\ref{cor:comm-char}.  
As pointed out by Farina in \cite[Section 2.5.2]{Fa}, Wadsworth has given a example of a commutative domain $R$
with quotient field $Q(R)$ which is a finitely generated extension of $k$, but where $R$ has a maximal ideal 
$P$ such that $Q(R/P) = R/P$ is not a finitely generated field extension of $k$ \cite[Remark 3 following Example 1]{Wa}.  Thus $R$ fails to be stably noetherian by Corollary~\ref{cor:PI-char}, but the failure is only seen by looking at a factor ring by a non-minimal prime.

On the other hand, the characterization of $\mc{PT}$-stably right noetherian PI algebras is simpler, and does not require 
looking at all prime factor rings.
\begin{corollary}
\label{cor:comm-purely-noeth}
Let $R$ be a right noetherian PI $k$-algebra.  Then $R$ is $\mc{PT}$-stably right noetherian over $k$ if and only if $\GKdim_k R < \infty$.
\end{corollary}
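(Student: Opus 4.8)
The plan is to treat the two implications separately. The ``if'' direction will fall out immediately from Theorem~\ref{thm:purely-noeth}, while the ``only if'' direction I would prove by contraposition, via a short chain of reductions followed by an appeal to V\'amos's theorem (Theorem~\ref{thm:vamos}).

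For the ``if'' direction, assume $\GKdim_k R < \infty$. For each $d \geq 0$ the ring $R_d = R \otimes_k k(x_1, \dots, x_d)$ is a localization of the noetherian ring $R[x_1, \dots, x_d]$, hence noetherian; it is again PI, since any multilinear polynomial identity satisfied by $R$ remains an identity of $R_d$; and $\GKdim_{k(x_1,\dots,x_d)}(R_d) = \GKdim_k R < \infty$ by Lemma~\ref{lem:GKbasics}(1). Therefore, by the basic properties of GK dimension for noetherian PI algebras recalled at the start of this section, GK dimension of finitely generated $R_d$-modules over $k(x_1, \dots, x_d)$ is exact, finitely partitive, and integer valued. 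Thus all hypotheses of Theorem~\ref{thm:purely-noeth} hold, and $R$ is $\mc{PT}$-stably right noetherian over $k$.

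For the ``only if'' direction I would argue the contrapositive: assuming $\GKdim_k R = \infty$, I will show $R$ is not $\mc{PT}$-stably right noetherian. Since the nilradical $N$ of $R$ is nilpotent we have $\GKdim_k R = \GKdim_k(R/N)$, and as $R/N$ embeds in $\bigoplus_i R/P_i$ over the finitely many minimal primes $P_i$, some prime factor $R/P_i$ has infinite GK dimension; by Lemma~\ref{lem:prime-reduce} it suffices to show that this factor is not $\mc{PT}$-stably right noetherian. So we may assume $R$ is a prime PI algebra with $\GKdim_k R = \infty$; writing $Q = Q(R)$ for its classical ring of fractions and $Z = Z(Q)$ for the center, we then have $\trdeg_k Z = \GKdim_k R = \infty$. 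Suppose toward a contradiction that $R$ is $\mc{PT}$-stably right noetherian. Then $Q$, being a localization of $R$ at a denominator set, is also $\mc{PT}$-stably right noetherian over $k$ by Lemma~\ref{lem:stableprops}(1); and since $Q$ is a simple PI ring which is finite-dimensional over its center $Z$ (Posner--Kaplansky), $Q$ is a free and hence faithfully flat $Z$-module, so Lemma~\ref{lem:stableprops}(2) shows that $Z$ is $\mc{PT}$-stably right noetherian over $k$.

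Finally, choose an infinite algebraically independent subset $\{t_1, t_2, \dots\}$ of $Z$ over $k$ and set $L = k(t_1, t_2, \dots) \subseteq Z$, a purely transcendental but infinitely generated extension of $k$. Because $Z$ is $\mc{PT}$-stably noetherian, $Z \otimes_k L$ is noetherian. On the other hand $Z$ is free as an $L$-module, so $Z \otimes_k L \cong Z \otimes_L (L \otimes_k L)$ is free, in particular faithfully flat, over the subring $L \otimes_k L$ (embedded via the first tensor factor); by descent of the noetherian property along the faithfully flat extension $L \otimes_k L \subseteq Z \otimes_k L$, the ring $L \otimes_k L$ would then have to be noetherian, contradicting Theorem~\ref{thm:vamos} since $L$ is not finitely generated over $k$. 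This contradiction completes the argument. The step I expect to need the most care is this final chain of reductions --- from $R$ to a prime factor, then to $Q(R)$, then to its center $Z$ --- checking that the $\mc{PT}$-stably noetherian property is inherited at each stage, so that V\'amos's obstruction can ultimately be applied to a purely transcendental subfield of $Z$.
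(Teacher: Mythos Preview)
Your proof is correct and follows essentially the same approach as the paper: the ``if'' direction via Theorem~\ref{thm:purely-noeth}, and the ``only if'' by passing through prime factors and the Posner--Kaplansky structure to a purely transcendental subfield $L \subseteq Z$ of infinite transcendence degree, where V\'amos's theorem supplies the contradiction. The only cosmetic difference is that the paper applies Lemma~\ref{lem:stableprops}(2) directly to the free extension $L \subseteq Q(R/P)$ to conclude that $L$ itself is $\mc{PT}$-stably noetherian, rather than routing through $Z$ and the faithfully flat descent $L \otimes_k L \subseteq Z \otimes_k L$ as you do.
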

\begin{proof}
Suppose that $\GKdim_k R < \infty$.  For any purely transcendental field extension $k \subseteq K_d = k(x_1, \dots, x_d)$, $R_d = R \otimes_k K_d$ is again a right noetherian PI $K_d$-algebra with finite GK dimension, so in particular the GK dimension of $R_d$-modules over $K_d$ is exact, finitely partitive, and integer valued.  Thus by Theorem~\ref{thm:purely-noeth}, $R$ is $\mc{PT}$-stably right noetherian over $k$.  

Conversely, if $R$ is $\mc{PT}$-stably noetherian over $k$, so is $Q(R/P)$ for each prime $P$ of $R$, 
by Lemma~\ref{lem:stableprops}(1).  Let $Z$ be the center of $Q(R/P)$.  If $Z$ has infinite transcendence degree over $k$, there is a field embedding $L = k(x_i | i \in \mb{N}) \subseteq Z$; since $Q(R/P)$ is free over $L$, $L$ must also be 
$\mc{PT}$-stably noetherian over $k$, by Lemma~\ref{lem:stableprops}(2).   But by V{\'a}mos's theorem (Theorem~\ref{thm:vamos}), $L \otimes_k L$ is not noetherian, a contradiction.  Thus $\GKdim(R/P) = \trdeg_k Z < \infty$.  Recalling that $\GKdim(R)$ is 
the maximum of $\GKdim(R/P)$ over minimal primes $P$ of $R$, we get $\GKdim_k(R) < \infty$.
\end{proof}

We note that the characterizing properties found in Corollary~\ref{cor:PI-char} and Corollary~\ref{cor:comm-purely-noeth} are left-right symmetric.  It is clear that these results (and all of the results in this paper) have analogous left-sided versions, so we conclude 
that a noetherian PI algebra is stably right noetherian if and only if it is stably left noetherian, and is 
$\mc{P} \mc{T}$-stably right noetherian if and only if it is $\mc{P} \mc{T}$-stably left noetherian.  By contrast, as is well-known,  
there are many PI algebras that are noetherian only on one side.

\section{Graded algebras}
\label{sec:graded}

In this last section we study the stably noetherian property for $\mb{N}$-graded $k$-algebras $A = \bigoplus_{n \geq 0} A_n$.
We say that a $\mb{Z}$-graded right $A$-module $M = \bigoplus_{n \in \mb{Z}} M_n$ is \emph{locally finite} if 
$\dim_k M_n < \infty$ for all $n \in \mb{Z}$, and \emph{left bounded} if $M_n = 0$ for all $n \ll 0$.  If $A$ is locally finite (as a module over itself), then every finitely generated graded right $A$-module $M$ is left bounded and locally finite.  
  We say that $A$ is \emph{connected} if $A_0 = k$.  While the results of this section are interesting already for connected algebras, using recent results from \cite{RR2} it is not much more difficult to handle the case of general locally finite graded algebras $A$, and so we state our main results in that setting. For a locally finite graded algebra $A$, let $J(A)$ be its graded Jacobson radical, in other words the intersection of all maximal graded right ideals.  It is easy to see that $A_{\geq 1} \subseteq J(A)$, and therefore that $A/J(A) \cong A_0/J(A_0)$, where $J(A_0)$ is the usual Jacobson radical of the Artinian algebra $A_0$.  In particular, $S = A/J(A)$ is finite dimensional over $k$  and semisimple.  
We say that $A$ is \emph{elementary} if $S = k^m$ for some $m$.  The reader can find more details in \cite[Section 2]{RR1}.

Let $A$ be an $\mb{N}$-graded locally finite $k$-algebra.  Let $A^e = A \otimes_k A^{op}$ be the \emph{enveloping algebra} of $A$.
We may identify $(A, A)$-bimodules with left $A^e$-modules.  In particular, 
$A$ has a canonical left $A^e$-module structure, and we say that $A$ is \emph{(graded) homologically smooth} if $A$ has a finite 
length graded projective resolution by finitely generated graded projective $A^e$-modules.  A homologically smooth algebra must have finite global dimension.  Indeed, if $P_{\bullet}$ is the finite length $A^e$-projective resolution of $A$, then for any right $A$-module $M$, the complex $M \otimes_A P_{\bullet}$ gives a finite length projective resolution of $M$ \cite[Lemma 3.2(1)]{RR1}.  However, homological smoothness is slightly stronger for finitely graded algebras than finite global dimension, in a useful way.  
\begin{lemma} \cite[Corollary 3.22, Corollary 3.14]{RR1}
\label{lem:gldim}
Let $A$ be a locally finite $\mb{N}$-graded $k$-algebra with graded Jacobson radical $J(A)$, and  let $S = A/J(A)$.
\begin{enumerate}
\item If $A$ is right noetherian, then $A$ is homologically smooth if and only if $A$ has finite global dimension and $S$ is a separable $k$-algebra.
\item If $A$ is homologically smooth over $k$, then $A_K = A \otimes_k K$ is also homologically smooth over $K$ for any field 
extension $k \subseteq K$; in particular $A_K$ still has finite global dimension.
\end{enumerate}
\end{lemma}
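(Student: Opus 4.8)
The plan is to prove part~(2) first, since it is used in part~(1). By homological smoothness there is a finite-length exact complex $P_\bullet \to A$ of finitely generated graded projective left $A^e$-modules. Since $K$ is free, hence faithfully flat, over $k$, the functor $-\otimes_k K$ is exact; and under the canonical identification $A_K \otimes_K A_K^{op} \cong A^e \otimes_k K$ the left $A^e$-module $A$ corresponds to the left $A_K^e$-module $A_K$. Applying $-\otimes_k K$ therefore gives a finite-length exact complex $P_\bullet \otimes_k K \to A_K$ of left $A_K^e$-modules, and since each $P_i$ is (up to a grading shift) a direct summand of a finitely generated free $A^e$-module, each $P_i \otimes_k K$ is a summand of a finitely generated free $A_K^e$-module, hence finitely generated graded projective. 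So $A_K$ is homologically smooth over $K$. The ``in particular'' clause is the observation recalled just before the lemma: tensoring the resulting finite $A_K^e$-projective resolution of $A_K$ with any right $A_K$-module $M$ over $A_K$ produces a finite projective resolution of $M$, so $\gldim A_K < \infty$ (compare \cite[Lemma 3.2(1)]{RR1}).

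For part~(1), the implication ``homologically smooth $\Rightarrow$ $\gldim A < \infty$'' is the case $K=k$ of the computation just made. For the converse, assume $\gldim A < \infty$ and that $S=A/J(A)$ is separable over $k$; I would build a finite, finitely generated graded projective $A^e$-resolution of $A$ from a relative bar construction. Because $J(A_0)$ is a nilpotent ideal of the finite-dimensional algebra $A_0$ and $A_0/J(A_0)=S$ is separable, the Wedderburn--Malcev theorem provides an algebra embedding $S \hookrightarrow A_0 \subseteq A$ splitting the projection $A\to S$, so $A = S \oplus J(A)$ as $S$-bimodules. Separability makes $S^e=S\otimes_k S^{op}$ semisimple, so every $S$-bimodule is $S^e$-projective; hence each term $A\otimes_S J(A)^{\otimes_S n}\otimes_S A$ of the $S$-relative (reduced) bar complex of $A$ is graded projective over $A^e$ (it is a summand of a direct sum of copies of $A\otimes_S S^e\otimes_S A\cong A^e$), and this complex resolves $A$ because it is $S$-split. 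The remaining task --- and the technical heart --- is to truncate this generally infinite resolution, whose terms may be infinitely generated, to a finite one with finitely generated terms. For finite length one uses $\gldim A<\infty$ together with a minimal-resolution / graded Nakayama argument to see that the $n$-th syzygy of $A$ over $A^e$ becomes projective once $n$ exceeds a bound controlled by $\projdim_A S$, and the right noetherian hypothesis then lets one take the terms finitely generated. This truncation-and-finite-generation step is what I expect to be the main obstacle, and it is where the real work of \cite[Corollary~3.22]{RR1} lies.

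It remains, for the forward direction of~(1), to deduce that homological smoothness forces $S$ separable. Here I would argue by reduction to the algebraically closed case. If $S$ were not separable over $k$, choose a field extension $K/k$ (for instance $K=\bar k$) for which $S\otimes_k K$ is not semisimple; by part~(2) the algebra $A_K$ would still be homologically smooth over $K$, so $\projdim_{A_K^e}(A_K)<\infty$. One then shows this is impossible: by lifting an idempotent from a non-separable simple factor of $S$ one isolates a summand of $A_K$ whose relevant enveloping ring is non-semisimple Artinian, over which the diagonal bimodule is a residue-type module of infinite projective dimension, forcing $\projdim_{A_K^e}(A_K)=\infty$ --- a contradiction. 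Hence $S$ is separable, and combining this with the two implications above proves~(1). Throughout, this follows \cite[Corollaries~3.14 and~3.22]{RR1}, the separability deduction being the second main technical point after the truncation step above.
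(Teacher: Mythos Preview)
The paper does not prove this lemma at all: it is stated with the citation \cite[Corollary 3.22, Corollary 3.14]{RR1} and no proof environment follows. So there is no ``paper's own proof'' to compare against; you are reconstructing results from \cite{RR1}.

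Your sketch for part~(2) is correct and is the standard argument. For part~(1), your outline is broadly along the right lines, but two places are genuinely incomplete rather than merely terse. First, in the separability deduction you assert that a non-separable factor of $S_K$ yields a summand of $A_K$ whose ``relevant enveloping ring'' is non-semisimple Artinian with diagonal bimodule of infinite projective dimension; but you have not said why the projective dimension of that summand over its small enveloping ring bounds $\projdim_{A_K^e}(A_K)$ from below. One needs an argument relating the $A_K^e$-projective dimension of $A_K$ to the $S_K^e$-projective dimension of $S_K$ (e.g.\ via a change-of-rings or Tor spectral sequence through the surjection $A_K^e \to S_K^e$), and this step is missing. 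Second, in the converse direction your truncation argument is only gestured at: the bar-type resolution you build has terms like $A \otimes_S J(A)^{\otimes_S n} \otimes_S A$, which are not obviously finitely generated over $A^e$ even when $A$ is right noetherian (noetherianity on one side of $A$ does not immediately control $A^e$-finiteness of tensor powers over $S$). The actual argument in \cite{RR1} handles both of these via a more careful analysis of minimal resolutions and the structure of $\Ext_{A^e}$, so your deferral to that reference is appropriate, but as written your sketch does not close either gap on its own.
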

Recall that a finite-dimensional $k$-algebra $S$ is \emph{separable} over $k$ if $S \otimes_k K$ is semsimple for 
all field extensions $k \subseteq K$.  Thus if $A$ is right noetherian $\mb{N}$-graded and elementary, then 
$S = k^m$ is automatically separable over $k$, and so part (1) of the preceding result shows that homological smoothness is just the same as finite global dimension.  But for arbitrary locally finite graded algebras, the additional condition that $S$ is separable is important; in contrast 
to part (2) of the lemma, finite global dimension is not preserved by base field extension in general.  For example, 
if $k \subseteq E$ is a finite degree purely inseparable field extension and we take $A = E = A_0$, then $A$ has finite global dimension but $A \otimes_k E$ has infinite global dimension.  See \cite[Example 3.20]{RR1}.

For a left bounded locally finite graded $A$-module $M$, its \emph{Hilbert series} is the formal Laurent series 
$h_M(t) = \sum_{n \in \mb{Z}} (\dim_k M_n) t^n \in \mb{Z}((t))$.  We say that the Hilbert series of $M$ is \emph{rational} 
if $h_M(t) = p(t)/q(t)$ for some $p(t) \in \mb{Z}[t, t^{-1}]$, $q(t) \in \mb{Z}[t]$ with $p(t)$ and $q(t)$ relatively prime and $q(0) = \pm 1$.  In this case, since $\mb{Q}[t] = \mb{Q}[1-t]$ we can also write $h_M(t)$ as a Laurent series in powers of $(1-t)$, say $h_M(t) = a_{-d} (1-t)^{-d} + a_{-d+1} (1-t)^{-d+1} + \dots$, for some $d \in \mb{Z}$ where $a_{-d} \neq 0$, so $d$ is the order of the pole of $h_M(t)$ at $t = 1$.   We call $a_{-d}$ the \emph{multiplicity} of $r(t)$ and write it as $\epsilon(r(t))$.   If $\GKdim(M) < \infty$, then $\GKdim(M) = d$ is the order of the pole of $h_M(t)$ at $t = 1$ (see \cite[Lemma 2.7]{RR2}).

The theory of Hilbert series is especially nice for elementary homologically smooth graded algebras; see \cite[Section 4]{RR2}.
The most important fact for us here is the following granularity of the multiplicities of finitely generated modules.
\begin{lemma}
\label{lem:granular}
Let $A$ be a locally finite $\mb{N}$-graded $k$-algebra which is homologically smooth and right noetherian.  Assume that $A$ is elementary, so $S = A/J(A) = k^m$ for some $m$.  

There is a number $\epsilon > 0$, depending only on $A$, such that given any finitely generated field extension $k \subseteq L$ and a finitely generated graded right $A \otimes_k L$-module $M$, then $M$ has a rational Hilbert series over $L$ and $\epsilon(h_M(t))$ is a positive integer multiple of $\epsilon$.
\end{lemma}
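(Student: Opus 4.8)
The plan is to reduce everything to a single denominator polynomial $\Delta(t)$ attached to $A$ alone and unchanged under the relevant base field extensions: once one knows that the Hilbert series of every finitely generated graded module over $A_L:=A\otimes_k L$ has the form $p_M(t)/\Delta(t)$ with $p_M(t)\in\mb{Z}[t,t^{-1}]$, rationality and the granularity of the multiplicity fall out by an elementary computation with rational functions. First I would record that for a finitely generated field extension $k\subseteq L$ the algebra $A_L$ still satisfies all the hypotheses in play: it is right noetherian (apply the Hilbert basis theorem to $A[x_1,\dots,x_n]$, localize at the central regular set $k[x_1,\dots,x_n]\setminus\{0\}$, then pass to a module-finite extension), it is homologically smooth over $L$ by Lemma~\ref{lem:gldim}(2) and hence of finite global dimension, and it is again elementary because $J(A_L)=J(A)\otimes_k L$ --- the right-hand side is a graded ideal, equal to $A_L$ in positive degrees and to $J(A_0)\otimes_k L=J(A_0\otimes_k L)$ in degree $0$ --- so that $A_L/J(A_L)=(A/J(A))\otimes_k L=L^m$. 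Writing $P_1,\dots,P_m$ for the graded projective covers of the simple $A$-modules $S_1,\dots,S_m$, the graded Nakayama lemma \cite[Section 2]{RR1} shows that every finitely generated graded projective $A_L$-module is a direct sum of shifts of $P_1\otimes_k L,\dots,P_m\otimes_k L$, that $P_i\otimes_k L$ is the graded projective cover of $S_i\otimes_k L$, and that $h_{P_i\otimes_k L}(t)=h_{P_i}(t)$ (graded dimensions being preserved by base field extension).

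The crucial step is to transport minimal graded projective resolutions across the extension. For each $i$, let $0\to F^{(i)}_g\to\cdots\to F^{(i)}_0\to S_i\to 0$ be the minimal graded projective resolution of $S_i$ over $A$; it is finite by finite global dimension, and each $F^{(i)}_l$ is a finite direct sum of shifts of the $P_j$. Applying the exact functor $-\otimes_k L$ gives a graded projective resolution of $S_i\otimes_k L$ over $A_L$, and I would check that it is again minimal: minimality is the vanishing of all differentials after applying $-\otimes_{A_L}(A_L/J(A_L))$, and $(-\otimes_k L)\otimes_{A_L}(A_L/J(A_L))\cong((-)\otimes_A(A/J(A)))\otimes_k L$ annihilates exactly the maps killed by $(-)\otimes_A(A/J(A))$. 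Consequently the integer matrix $Q(t)=(q_{ij}(t))\in M_m(\mb{Z}[t])$ determined by $\sum_l(-1)^l h_{F^{(i)}_l}(t)=\sum_j q_{ij}(t)\,h_{P_j}(t)$ is literally the same whether built from $A$ or from $A_L$. Invoking the Hilbert series theory for elementary homologically smooth algebras \cite[Section 4]{RR2} (applied over $A$), taking Hilbert series gives $\sum_j q_{ij}(t)\,h_{P_j}(t)=h_{S_i}(t)=1$, and $Q(t)$ is invertible over $\mb{Z}[[t]]$, its inverse being the graded Cartan matrix recording the graded composition multiplicities of the simples in the $P_j$; hence $\Delta(t):=\det Q(t)\in\mb{Z}[t]$ is a unit of $\mb{Z}[[t]]$, so $\Delta(0)=\pm1$, and by Cramer's rule $h_{P_j}(t)=r_j(t)/\Delta(t)$ with $r_j(t)\in\mb{Z}[t]$.

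Now take any finitely generated graded right $A_L$-module $M$. Right noetherianity and finite global dimension give $M$ a finite resolution by finitely generated graded projectives; taking the Euler characteristic of Hilbert series and using the description of the graded projectives, $h_M(t)=\sum_j f_j(t)\,h_{P_j}(t)$ for some $f_j(t)\in\mb{Z}[t,t^{-1}]$, whence $h_M(t)=p_M(t)/\Delta(t)$ with $p_M(t)\in\mb{Z}[t,t^{-1}]$ --- a rational Hilbert series, with denominator dividing the fixed polynomial $\Delta(t)$ of constant term $\pm1$. For the multiplicity, factor $\Delta(t)=(1-t)^{d_0}\delta(t)$ with $\delta(t)\in\mb{Z}[t]$ and $\delta(1)\ne 0$. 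For $M\ne 0$, let $d=\GKdim_L M$, which is the order of the pole of $h_M$ at $t=1$, so $0\le d\le d_0$; then $p_M(t)=(1-t)^{d_0-d}\widetilde p_M(t)$ with $\widetilde p_M(1)\in\mb{Z}\setminus\{0\}$, and $\epsilon(h_M(t))=\lim_{t\to1}(1-t)^d h_M(t)=\widetilde p_M(1)/\delta(1)$. Since the multiplicity of a nonzero module is positive, this equals $|\widetilde p_M(1)|/|\delta(1)|$, a positive integer multiple of $\epsilon:=1/|\delta(1)|$. Because $Q(t)$ --- and therefore $\Delta(t)$, $\delta(t)$ and $\epsilon$ --- depends only on $A$, this single $\epsilon$ works for every finitely generated $L\supseteq k$ and every finitely generated graded $A_L$-module, as required.

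The step I expect to be the main obstacle is the identification of the minimal graded projective resolutions of the simples over $A_L$ with the base changes of those over $A$, and hence the $L$-independence of the matrix $Q(t)$. This rests on the graded Nakayama lemma, graded Krull--Schmidt, and the uniqueness of minimal graded projective resolutions over locally finite $\mb{N}$-graded algebras as developed in \cite[Section 2]{RR1} and \cite[Section 4]{RR2}, combined with the flatness of $L$ over $k$. The rest --- the diagram chase for minimality, Cramer's rule, and the bookkeeping with rational functions and pole orders --- is routine, using also the invariance of GK dimension under base field extension (Lemma~\ref{lem:GKbasics}).
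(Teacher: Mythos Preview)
Your proposal is correct and follows essentially the same strategy as the paper: both arguments attach to $A$ a matrix polynomial in $M_m(\mb{Z}[t])$ with determinant $\Delta(t)$ (the paper's $D(t)$) having $\Delta(0)=\pm 1$, deduce that every perfect graded module has Hilbert series in $\Delta(t)^{-1}\mb{Z}[t,t^{-1}]$, and take $\epsilon$ to be the reciprocal of $|\delta(1)|$ where $\Delta(t)=(1-t)^{d_0}\delta(t)$. The one tactical difference is in the base-change step: the paper observes directly that the primitive idempotents $e_1,\dots,e_m$ of $A_0$ remain primitive in $(A_L)_0$ (since $A_L/J(A_L)\cong L^m$ forces exactly $m$ primitive summands), so the matrix Hilbert series $(\dim_k e_iA_ne_j)$ is literally unchanged and hence so is $\Delta(t)$; you instead argue that the minimal graded projective resolutions of the simples base-change to minimal resolutions over $A_L$, whence your matrix $Q(t)$ is unchanged. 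Both are valid, and the idempotent argument is a bit quicker, but your resolution argument has the virtue of making explicit why the indecomposable projectives and the Cartan-type data are stable---which is really what underlies the paper's citation of \cite[Proposition~4.2]{RR2}.
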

\begin{proof}
The hypotheses of homologically smooth and right noetherian imply that $\GKdim(A) < \infty$ \cite[Proposition 4.4]{RR2}.  We can decompose $1 = e_1 + \dots + e_m$, where the $e_i$ are pairwise orthogonal primitive idempotents $e_i \in A_0$.  The \emph{matrix Hilbert series} of $A$ is the formal series $\sum_{n \geq 0} H_n t^n \in M_m(\mb{Z}[[t]])$, where $H_n$ is the matrix with $(i,j)$-entry $\dim_k e_i A_n e_j$.
The matrix Hilbert series of $A$ is of the form $q(t)^{-1}$, for some matrix polynomial $q(t) \in M_m(\mb{Z}[t])$ such 
that $D(t) = \det q(t)$ satisfies $D(0) = \pm 1$ \cite[Proposition 4.2(1)]{RR2}.  Recall that an $A$-module $M$ is \emph{perfect} 
if it has a finite length projective resolution where each projective is finitely generated.  By \cite[Proposition 4.2(2,3)]{RR2}, 
if $M$ is a finitely generated graded right $A$-module which is perfect with $\GKdim(M) < \infty$, then $h_M(t)$ is rational 
with multiplicity $\epsilon(h_M(t))$ which is positive and an integer multiple of $\epsilon(D(t)^{-1})$.  Since we assume that $A$ is noetherian
and the homologically smooth assumption implies that $A$ has finite global dimension, every finitely generated graded right $A$-module $M$ is perfect.  Moreover, $\GKdim(M) < \infty$ since $\GKdim(A) < \infty$.  So every finitely generated graded right $A$-module $M$ has 
a multiplicity $\epsilon(h_M(t))$ which is a positive integer multiple of $\epsilon$, where $\epsilon = |\epsilon(D(t)^{-1})| >0$.

When we extend the base field by a finitely generated extension $k \subseteq L$, then $A_L$ is still right noetherian 
and homologically smooth over $L$, by Lemma~\ref{lem:gldim}(2).
We claim that the idempotents $e_i$ are still primitive in $A_L$.  Note that $S$ is separable over $k$ since 
$A$ is homologically smooth,  by Lemma~\ref{lem:gldim}(1).  It follows that 
$A_L/J(A_L) = (A/J(A)) \otimes_k L = k^m \otimes_k L \cong L^m$ \cite[Lemma 3.7]{RR1}; in particular, $A_L$ is elementary over $L$.  If the $e_i$ are not all still primitive, then $1$ decomposes
as a sum of more than $m$ idempotents in $A_L$, contradicting $A_L/J(A_L) = L^m$.  This proves the claim, and so 
$1$ has the same decomposition as a sum of primitive orthogonal idempotents in $A_L$ as in $A$.  It follows that $A_L$  has the same matrix Hilbert series $q(t)^{-1}$ over $L$.   Thus the same argument as in the previous paragraph applies to finitely generated graded modules over $A_L$, with the same constant $\epsilon$.
\end{proof}

We conclude with our main result on the stably noetherian property for graded algebras.  
\begin{theorem}
\label{thm:graded-stable}
Let $A$ be a locally finite right noetherian $\mb{N}$-graded $k$-algebra which is homologically smooth over $k$. Then $A$ is stably right noetherian over $k$.
\end{theorem}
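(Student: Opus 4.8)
The plan is to apply the inductive criterion of Theorem~\ref{thm:reduce} with $\mc{F}$ the (extensive) class of all field extensions of $k$, after first reducing to the case in which $A$ is elementary. For that reduction: since $A$ is right noetherian and homologically smooth, it has finite global dimension and $S=A/J(A)$ is separable over $k$ by Lemma~\ref{lem:gldim}(1). I would pick a finite separable extension $k\subseteq k'$ splitting $S$; by Lemma~\ref{lem:stableprops}(4) it is enough to treat $A\otimes_k k'$ over $k'$, and this algebra is again locally finite, $\mb{N}$-graded, right noetherian and homologically smooth over $k'$ (Lemma~\ref{lem:gldim}(2)), now with split semisimple degree-zero part modulo its radical. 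Lifting a complete set of orthogonal primitive idempotents of $A_0/J(A_0)$ to $A_0$ and letting $e\in A_0$ be the sum of one from each block, $B=eAe$ is a locally finite $\mb{N}$-graded elementary algebra which is graded Morita equivalent to $A$ (cf. \cite[Section 2]{RR1}); since right noetherianness, finite global dimension, and graded Morita equivalence are all preserved by arbitrary base field extension, and finite global dimension coincides with homological smoothness for elementary algebras by Lemma~\ref{lem:gldim}(1), $B$ is homologically smooth and $A$ is stably right noetherian over $k$ iff $B$ is. So I may assume from now on that $A$ is elementary.

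Next I would check hypothesis (i) of Theorem~\ref{thm:reduce}. Fix an extension $k\subseteq K$ and a small field $L$ (a finitely generated subextension $k\subseteq L\subseteq K$). Then $A_L=A\otimes_k L$ is right noetherian (a localization of a polynomial extension of $A$ in finitely many variables), locally finite $\mb{N}$-graded and elementary over $L$ since $A_L/J(A_L)\cong (A/J(A))\otimes_k L\cong L^m$, homologically smooth over $L$ by Lemma~\ref{lem:gldim}(2), and of finite GK dimension because homological smoothness plus right noetherianness forces this \cite[Proposition 4.4]{RR2}. For such an algebra, finitely generated graded modules have rational Hilbert series (Lemma~\ref{lem:granular} and \cite[Section 4]{RR2}), so GK dimension of finitely generated graded $A_L$-modules is integer valued (it is the order of the pole of the Hilbert series at $t=1$), exact (Hilbert series add over short exact sequences), and finitely partitive (by the multiplicity). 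Passing to the associated graded module $\operatorname{gr} M$ for a good filtration — which for a finitely generated module over a noetherian locally finite $\mb{N}$-graded algebra (necessarily affine) exists, preserves GK dimension, and is exact on short exact sequences (standard, cf. \cite{KL}) — transports all three properties to arbitrary finitely generated $A_L$-modules, so (i) holds.

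For hypothesis (ii) I would argue via a quantized multiplicity. For a finitely generated $A_L$-module $M$ put $e(M)=\epsilon(h_{\operatorname{gr} M}(t))$, the Hilbert-series multiplicity of $\operatorname{gr} M$; this is independent of the good filtration, additive on short exact sequences whose three terms all have the same GK dimension, and unchanged by base field extension because $\operatorname{gr}(M\otimes_L L')\cong(\operatorname{gr} M)\otimes_L L'$ has the same Hilbert series over $L'$ as $\operatorname{gr} M$ does over $L$. Applying Lemma~\ref{lem:granular} to $\operatorname{gr} N$ (valid as $A_L$ is elementary and $L$ is finitely generated over $k$) produces a constant $\epsilon>0$ depending only on $A$ with $e(N)\geq\epsilon$ for every nonzero finitely generated module $N$ over any such $A_L$. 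Now fix a small $L$ and a finitely generated $A_L$-module $M$, and bound $\ell(M_{L'})$ for small $L'\supseteq L$ by induction on $d=\GKdim_L(M)=\GKdim_{L'}(M_{L'})$ (Lemma~\ref{lem:GKbasics}(1)), the case $d=-1$ being trivial. Let $\tau(M)\subseteq M$ be the largest submodule of GK dimension $<d$ (finitely generated, as $M$ is noetherian); then $M/\tau(M)$ is $d$-homogeneous and stays $d$-homogeneous after any base field extension by Proposition~\ref{prop:homogeneous}, so every factor of a critical composition series of $(M/\tau(M))_{L'}$ has GK dimension $d$, and additivity of $e$ together with $e\geq\epsilon$ on each factor gives $\ell((M/\tau(M))_{L'})\leq e(M/\tau(M))/\epsilon$, a bound independent of $L'$. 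Combining with Lemma~\ref{lem:ccfacts}(2) applied to $0\to\tau(M)_{L'}\to M_{L'}\to(M/\tau(M))_{L'}\to 0$ yields $\ell(M_{L'})\leq n(\tau(M))+e(M/\tau(M))/\epsilon$, where $n(\tau(M))$ is finite by the induction hypothesis since $\GKdim_L\tau(M)<d$. This verifies (ii), and Theorem~\ref{thm:reduce} then gives that $A\otimes_k K$ is right noetherian for every extension field $K$, i.e. $A$ is stably right noetherian over $k$.

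I expect the main obstacle to be hypothesis (ii): one must prevent criticality from spreading out without bound under base field extension. The quantization of Hilbert-series multiplicities in Lemma~\ref{lem:granular} is exactly the tool for this, but to use it one needs both the reduction to elementary algebras (so that the lemma applies) and the passage from arbitrary finitely generated modules to graded ones via good filtrations (so that an additive, extension-invariant multiplicity is available for every term of a critical composition series, including the non-graded ones). Once the uniform lower bound $e(N)\geq\epsilon$ is in hand, the induction on GK dimension is routine.
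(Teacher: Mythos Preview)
Your proposal is correct and follows the same overall strategy as the paper (reduce to the elementary case, then feed the quantized multiplicity of Lemma~\ref{lem:granular} into Theorem~\ref{thm:reduce}), but it differs in one significant technical choice. The paper applies Theorem~\ref{thm:reduce} entirely in the \emph{graded} category---using graded-critical modules, graded critical composition series, and the graded right noetherian property---simply noting that the proof of Theorem~\ref{thm:reduce} goes through verbatim in that setting; only at the very last line does it invoke the standard fact \cite[Proposition~II.3.1]{NV} that a locally finite $\mb{N}$-graded algebra which is graded right noetherian is right noetherian. You instead apply Theorem~\ref{thm:reduce} as stated (for ungraded modules), and to make hypotheses (i) and (ii) work for arbitrary finitely generated modules you transport the Hilbert-series multiplicity through good filtrations $M\mapsto\operatorname{gr}M$. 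This works, but the good-filtration layer (independence of $e(M)$ from the chosen filtration, compatibility of $\operatorname{gr}$ with $-\otimes_L L'$, additivity on ungraded short exact sequences of equal GK dimension, and the deduction of ungraded exactness/partitivity from the graded versions) carries several assertions you label ``standard'' that the paper's route simply never needs. The paper's approach is more economical precisely because the noetherian property of $A_K$ is already detected at the graded level; your approach has the minor compensating virtue of invoking Theorem~\ref{thm:reduce} exactly as proved rather than its graded analogue.
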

\begin{proof}
We first reduce to the elementary case.  By Lemma~\ref{lem:gldim}(1), $S = A/J(A)$ is separable over $k$.  Then by 
\cite[Theorem 4.3]{RR2}, there is a finite degree field extension $k \subseteq \ell$ and a full idempotent $e \in A_{\ell} = A \otimes_k \ell$ such that $A' = e A_{\ell} e$ is an elementary locally finite graded algebra over $\ell$.  By \cite[Corollary 3.14, Proposition 3.17]{RR1}, $A'$ is still homologically smooth over $\ell$.  Suppose we prove that $A'$ is stably right noetherian over $\ell$.
Then for any extension $\ell \subseteq K$, $eA_{\ell}e \otimes_\ell K$ is right noetherian.  Since $e$ is a full idempotent in 
$A_{\ell}$ (that is, $A_{\ell} e A_{\ell} = A_{\ell}$), $e \otimes 1$ is also a full idempotent in $A_{\ell} \otimes_{\ell} K$, 
and so $(e \otimes 1) (A_{\ell} \otimes_{\ell} K ) (e \otimes 1) \cong (e A_{\ell} e) \otimes_{\ell} K$ is Morita equivalent to $A_{\ell} \otimes_{\ell} K$.  Thus $A_{\ell} \otimes_{\ell} K$ is also right noetherian, since this is a Morita invariant property.  This shows that $A_{\ell}$ is stably right noetherian over $\ell$.  Then $A$ will be stably noetherian over $k$ by Lemma~\ref{lem:stableprops}(4).  Thus it is enough to prove the theorem for elementary algebras.  

We now change notation back and assume that $A$ itself is elementary as a $k$-algebra.  By \cite[Proposition 4.4]{RR2}, 
$\GKdim_k(A) < \infty$.  We now aim to apply Theorem~\ref{thm:reduce}, but we apply it in the graded category.  Thus we work with graded GK critical modules (graded modules such that every nontrivial factor module by a graded submodule has smaller GK dimension) and graded critical composition series, and consider the graded noetherian property of modules (ACC on chains of graded submodules).  It is easy to see that the theorem works in this graded setting as well, with the same proof.
  
We verify Hypotheses (i) and (ii) from Theorem~\ref{thm:reduce} in the graded case.  Fix a field extension $k \subseteq K$ 
and let $k \subseteq L \subseteq K$ with $k \subseteq L$ a finitely generated extension.  Then $A_L$ is still noetherian and a homologically smooth elementary $L$-algebra, by Lemma~\ref{lem:gldim}(2).  By \cite[Proposition 2.8, Proposition 4.4]{RR2}, GK dimension of finitely generated graded $A_L$-modules is graded exact, graded finitely partitive, and integer valued.    Thus hypothesis (i) holds.

For (ii), we need to show that if $k \subseteq L \subseteq L' \subseteq K$ with $k \subseteq L'$ still finitely generated, given a nonzero finitely generated graded $R_L$-module $M$, there is an upper bound on the lengths of the graded critical composition series of the modules $M' = M \otimes_L L'$, as $L'$ varies.   As we saw in the proof of Theorem~\ref{thm:reduce-to-residues}, the set of modules with this property is closed under extensions, so it is enough to prove it for an $R_L$-module $M$ which is itself graded critical, say with $\GKdim_L(M) = d$.  Then $M$ is $d$-homogeneous and so $M'$ is still $d$-homogeneous by Proposition~\ref{prop:homogeneous}.  Hence the graded critical composition series of $M'$ has factors all of which are $d$-critical.  Using Lemma~\ref{lem:granular}, there is a fixed $\epsilon > 0$ such that the multiplicity of any 
nonzero finitely generated graded $R_L$ or $R_{L'}$-module is a positive multiple of $\epsilon$.  In particular, 
$\epsilon(h_M(t)) = c \, \epsilon$ for some integer $c > 0$.  Since the Hilbert series of $M'$ over $L'$ is the same as the Hilbert series of $M$ over $L$, we also have $\epsilon(h_{M'}(t)) = c \, \epsilon$.  But multiplicity is additive in short exact graded sequences of graded modules, all of which have the same dimension $d$ \cite[Proposition 4.2(4)]{RR2}.  Thus $c$ is an upper bound for 
the length of a graded critical composition series for $M'$.  Since the number $c$ does not depend on the extension $L'$,
we can take $n(M) = c$ to verify Hypothesis (ii).  

Now by Theorem~\ref{thm:reduce} (applied in the graded category), all finitely generated graded right $R$-modules are stably graded right  noetherian.  In particular, $R$ is.  Thus $R \otimes_k K$ is graded right noetherian for all field extensions $k \subseteq K$.  However, for locally finite $\mb{N}$-graded algebras, graded right noetherian implies right noetherian \cite[Proposition II.3.1]{NV}.  Thus $R \otimes_k K$ is right noetherian for all extensions $k \subseteq K$, and $R$ is stably noetherian over $k$.
\end{proof}

Suppose that $A$ is a locally finite $\mb{N}$-graded $k$-algebra.  We say 
that $A$ is \emph{(graded) twisted Calabi-Yau of dimension $d$} if it is homologically smooth and in addition $\Ext^d_{A^e}(A, A^e) \cong U$ as graded right $A^e$-modules, for some graded invertible $(A,A)$-bimodule $U$, with $\Ext^i_{A^e}(A, A^e) = 0$ for $i \neq d$.  
So Corollary~\ref{cor:cy} follows directly from Theorem~\ref{thm:graded-stable} and the definition.  
Twisted Calabi-Yau algebras been of much interest in recent years.   As a special case, a connected graded twisted Calabi-Yau algebra is the same as an Artin-Schelter regular algebra (in the version of the definition which does not assume finite GK dimension) \cite[Lemma 1.2]{RRZ}. 
In particular, any member of one of these important classes of algebras is homologically smooth, and so our theorem 
shows that if it is noetherian, then it must be stably noetherian.  On the other hand, it is an open question 
whether graded twisted Calabi-Yau algebras with finite GK dimension must be noetherian; even for AS regular algebras this is known to be true 
only in small dimension.  

\bibliographystyle{plain}

\end{document}